\newcommand{\bbC}{{\mathbb{C}}}
\newcommand{\bbN}{{\mathbb{N}}}
\newcommand{\bbR}{{\mathbb{R}}}
\newcommand{\cA}{{\mathcal A}}
\newcommand{\cB}{{\mathcal B}}
\newcommand{\cH}{{\mathcal H}}
\newcommand{\cU}{{\mathcal U}}
\newcommand{\cX}{{\mathcal X}}
\newcommand{\no}{\notag}
\newcommand{\lb}{\label}
\newcommand{\f}{\frac}
\newcommand{\ol}{\overline}
\newcommand{\wti}{\widetilde}
\newcommand{\Oh}{O}
\newcommand{\oh}{o}
\newcommand{\dom}{\text{\rm{dom}}}
\newcommand{\bi}{\bibitem}
\newcommand{\hatt}{\widehat}
\newcommand{\lbrac}{\left[\negthickspace}
\newcommand{\rbrac}{\negthickspace\right]}
\newcommand{\La}{\Lambda}
\newcommand{\al}{\alpha}
\newcommand{\be}{\beta}
\newcommand{\ga}{\gamma}
\newcommand{\de}{\delta}
\newcommand{\te}{\theta}
\newcommand{\Te}{\Theta}
\newcommand{\CR}{{\bbC\backslash\sigma(\Hte)}}
\newcommand{\bz}{\ensuremath{\bar z}}
\newcommand{\gate}{\ensuremath{\ga_{\te_0,\te_R}}}
\newcommand{\gates}{\ensuremath{\ga_{\te_0^{\prime},\te_R^{\prime}}}}
\newcommand{\gateq}{\ensuremath{\ga_{(\te_0+\frac{\pi}{2}) \,
\text{\rm mod} (2 \pi),(\te_R+\frac{\pi}{2}) \, \text{\rm mod} (2 \pi)}}}
\newcommand{\Lazz}{\ensuremath{\La_{0,0}}}
\newcommand{\Late}{\ensuremath{\La_{\te_0,\te_R}}}
\newcommand{\Lates}{\ensuremath{\La_{\te_0,\te_R}^{\te_0^{\prime},\te_R^{\prime}}}}
\newcommand{\Lateq}{\ensuremath{\La_{\te_0,\te_R}^{(\te_0+\frac{\pi}{2})
\, \text{\rm mod} (2 \pi),(\te_R+\frac{\pi}{2}) \, \text{\rm mod} (2 \pi)}}}
\newcommand{\Lazzqq}{\ensuremath{\La_{0,0}^{\frac{\pi}{2},\frac{\pi}{2}}}}
\newcommand{\Lade}{\ensuremath{\La_{\de_0,\de_R}}}
\newcommand{\Lades}{\ensuremath{\La_{\de_0,\de_R}^{\de_0^{\prime},\de_R^{\prime}}}}
\newcommand{\Cte}{\ensuremath{C_{\te_0,\te_R}}}
\newcommand{\Ctes}{\ensuremath{C_{\te_0^{\prime},\te_R^{\prime}}}}
\newcommand{\Ste}{\ensuremath{S_{\te_0,\te_R}}}
\newcommand{\Stes}{\ensuremath{S_{\te_0^{\prime},\te_R^{\prime}}}}
\newcommand{\Gte}{\ensuremath{G_{\te_0,\te_R}}}
\newcommand{\Hte}{\ensuremath{H_{\te_0,\te_R}}}
\newcommand{\tgate}{\ensuremath{\widehat \ga_{\te_0,\te_R}}}
\newcommand{\Rbzte}{\ensuremath{R_{\bar z,\te_0,\te_R}}}
\newcommand{\hg}{\ensuremath{\widehat g}}
\newcommand{\bg}{\ensuremath{\bar g}}
\renewcommand{\Re}{\text{\rm Re}}
\renewcommand{\Im}{\text{\rm Im}}
\renewcommand{\le}{\leqslant}
\newtheorem{theorem}{Theorem}[section]
\newtheorem{lemma}[theorem]{Lemma}
\newtheorem{corollary}[theorem]{Corollary}
\theoremstyle{definition}
\newtheorem{remark}[theorem]{Remark}
\newtheorem{example}[theorem]{Example}
\allowdisplaybreaks \numberwithin{equation}{section}
\begin{document}

\title[Boundary Data Maps for Schr\"odinger Operators on an Interval]
{Boundary Data Maps for Schr\"odinger Operators \\ on a Compact Interval}

\author[S.\ Clark, F.\ Gesztesy, and Marius Mitrea]{Stephen Clark, Fritz Gesztesy,
and Marius Mitrea}

\address{Department of Mathematics \& Statistics,
University of Missouri, Rolla, MO 65409, USA}
\email{sclark@umr.edu}
\urladdr{http://web.umr.edu/~sclark/index.html}

\address{Department of Mathematics,
University of Missouri, Columbia, MO 65211, USA}
\email{fritz@math.missouri.edu}
\urladdr{http://www.math.missouri.edu/personnel/faculty/gesztesyf.html}

\address{Department of Mathematics, University of
Missouri, Columbia, MO 65211, USA}
\email{mitream@missouri.edu}
\urladdr{http://www.math.missouri.edu/personnel/faculty/mitream.html}

\thanks{Based upon work partially supported by the US National Science
Foundation under Grant No.\ DMS-0653180.}

\thanks{{\it Math. Modelling Natural Phenomena} (to appear).}

\subjclass[2000]{Primary: 34B05, 34B27, 34B40, 34L40; 
Secondary: 34B20, 34L05, 47A10, 47E05.}
\keywords{(non-self-adjoint) Schr\"odinger operators on a compact interval, separated boundary conditions, boundary data maps, Robin-to-Robin maps, 
linear fractional transformations, Krein-type resolvent formulas.}


\date{\today}

\begin{abstract}
We provide a systematic study of boundary data maps, that is, $2 \times 2$ 
matrix-valued Dirichlet-to-Neumann and more generally, Robin-to-Robin maps,  
associated with one-dimensional Schr\"odinger operators on a compact 
interval $[0,R]$ with separated boundary conditions at $0$ and $R$. Most of 
our results are formulated in the non-self-adjoint context. 

Our principal results include explicit representations of these boundary 
data maps in terms of the resolvent of the underlying Schr\"odinger 
operator and the associated boundary trace maps, Krein-type resolvent 
formulas relating Schr\"odinger operators corresponding to different 
(separated) boundary conditions, and a derivation of the Herglotz 
property of boundary data maps (up to right multiplication by an appropriate 
diagonal matrix) in the special self-adjoint case. 
\end{abstract}

\maketitle

\section{Introduction}

To briefly set the stage for this paper, let $R>0$, introduce  
the strip $S_{2 \pi}=\{z\in\bbC\,|\, 0\leq \Re(z) < 2 \pi\}$, and consider  
the boundary trace map 
\begin{equation} \lb{1.1}
\gate \colon \begin{cases}
C^1({[0,R]}) \rightarrow \bbC^2, \\
u \mapsto \begin{bmatrix} \cos(\te_0)u(0) + \sin(\te_0)u'(0)\\
\cos(\te_R)u(R) - \sin(\te_R)u'(R) \end{bmatrix}, \end{cases}
\quad  \te_0, \te_R\in S_{2 \pi},
\end{equation}
where ``prime'' denotes $d/dx$. In addition, assuming that 
\begin{equation}
V\in L^1((0,R); dx)    \lb{1.2}
\end{equation}
(we emphasize that $V$ is not assumed to be real-valued for most of this paper), 
one can introduce the family of one-dimensional Schr\"odinger operators
$\Hte$ in $L^2((0,R); dx)$ by
\begin{align}
& \Hte f= -f'' + Vf,  \quad \te_0, \te_R\in S_{2 \pi},     \no   \\
& f\in\dom(\Hte)=\big\{ g \in L^2((0,R); dx)\,\big|\, g, g'\in AC({[0,R]}); \, \gate (g)=0;  \label{1.3} \\
& \hspace*{6.75cm} (-g''+Vg)\in L^2((0,R); dx)\big\},   \no
\end{align}
were $AC([0,R])$ denotes the set of absolutely continuous functions on $[0,R]$.

Assuming that $z\in\CR$ (with $\sigma(T)$ denoting the spectrum of $T$) 
and $\te_0, \te_R \in S_{2 \pi}$, we recall that the boundary value problem given by
\begin{align} 
-&u'' + Vu=zu,\quad u, u'\in AC([0,R]),   \label{1.4}\\
&\gate(u)=\begin{bmatrix}c_0\\ c_R \end{bmatrix}\in\bbC^2,    \label{1.5}
\end{align}
has a unique solution denoted by 
$u(z,\cdot)=u(z,\cdot\,;(\te_0,c_0),(\te_R,c_R))$ for each $c_0, c_R\in\bbC$. To each boundary value problem \eqref{1.4}, \eqref{1.5}, we now associate a 
family of \emph{general boundary data maps}, 
$\Lates (z) : \bbC^2 \rightarrow \bbC^2$, for
$\te_0, \te_R, \te_0^{\prime},\te_R^{\prime}\in S_{2 \pi}$,  where
\begin{align}\label{1.6}
\begin{split}
\Lates (z) \begin{bmatrix}c_0\\ c_R \end{bmatrix} &=
\Lates (z) \big(\gate(u(z,\cdot\ ;(\te_0,c_0),(\te_R,c_R)))\big)   \\
&= \gates(u(z,\cdot\ ;(\te_0,c_0),(\te_R,c_R))).
\end{split}
\end{align}
With $u(z,\cdot)=u(z,\cdot\ ;(\te_0,c_0),(\te_R,c_R))$, then $\Lates (z) $ can be represented as a $2\times 2$ complex matrix, where
\begin{align}\label{1.7}
\Lates (z) \begin{bmatrix}c_0\\ c_R \end{bmatrix} &=
\Lates (z) \begin{bmatrix} \cos(\te_0)u(z,0) + \sin(\te_0)u'(z,0)\\[1mm]
\cos(\te_R)u(z,R) - \sin(\te_R)u'(z,R) \end{bmatrix}  \no \\
&=
\begin{bmatrix} \cos(\te_0^{\prime})u(z,0) + \sin(\te_0^{\prime})u'(z,0)\\[1mm]
\cos(\te_R^{\prime})u(z,R) - \sin(\te_R^{\prime})u'(z,R) \end{bmatrix}.
\end{align}

The map $\Lates (z)$ represents the principal object studied in this paper.

We prove in Section \ref{s2} that $\Lates (z)$ is well-defined for 
$z\in\bbC\backslash\sigma(\Hte)$, that is, it is invariant with respect to a 
change of basis of solutions of \eqref{1.4}, derive its basic properties 
(cf.\ Corollary \ref{c2.4}), and derive the explicit representation 
\eqref{2.20}, \eqref{2.20a} in terms of a distinguished basis of solutions 
\eqref{2.5}.

In Section \ref{s3}, we relate a special case of $\Lates (z)$, given by the 
generalized Dirichlet-to-Neumann maps $\Late (z) = \Lateq (z)$, to 
Weyl--Titchmarsh $m$-functions, derive its asymptotic behavior as 
$z$ tends to infinity, and most importantly, derive an explicit representation 
of the boundary data maps $\Lates (z)$ in terms of the resolvent of the 
underlying Schr\"odinger operator $\Hte$ and the associated boundary trace maps, 
\begin{align}
\begin{split} 
\Lates (z) S_{\theta_0' -\theta_0,\theta_R'-\theta_R}
=\gamma_{\te_0',\te_R'}
\big[\gamma_{\ol{\theta_0'},\ol{\theta_R'}} ((\Hte)^* - {\ol z} I)^{-1}\big]^*,& 
\lb{1.8}  \\[1mm] 
\te_0, \te_R, \te_0', \te_R' \in S_{2 \pi}, \; z\in\bbC\backslash\sigma(\Hte),& 
\end{split}
\end{align}
with $S_{\alpha,\beta}$ denoting the $2 \times 2$ diagonal matrix 
$S_{\alpha,\beta}= {\rm diag}\big(\sin(\alpha), \sin(\beta)\big)$. 

Theorem \ref{t4.1}, the principal result in Section \ref{s4}, then centers around 
the following linear fractional transformation relating the boundary data maps 
$\Lates (z)$ and $\Lades(z)$, 
\begin{align}
\begin{split}
\Lates(z) &= \big(S_{\de_0^\prime-\de_0,\de_R^\prime-\de_R}\big)^{-1}
\big[S_{\de_0^\prime-\te_0^\prime,\de_R^\prime-\te_R^\prime}+
S_{\te_0^\prime-\de_0,\te_R^\prime-\de_R}\Lades(z) \big]   \label{1.9} \\
& \quad \times
\big[S_{\de_0^\prime-\te_0,\de_R^\prime-\te_R}+
S_{\te_0-\de_0,\te_R-\de_R}\Lades(z) \big]^{-1}S_{\de_0^\prime-\de_0,\de_R^\prime-\de_R}, 
\end{split}
\end{align}
assuming $\te_0, \te_R, \te_0^{\prime},\te_R^{\prime}, \de_0,\de_R,\de_0^\prime,\de_R^\prime\in S_{2 \pi}$, $\de_0^\prime-\de_0\ne 0 \, \text{\rm mod} (\pi)$, 
$\de_R^\prime-\de_R \ne 0 \, \text{\rm mod} (\pi)$, and 
$z\in\bbC\backslash\big(\sigma(H_{\te_0,\te_R})\cup
\sigma(H_{\de_0,\de_R})\big)$. The linear fractional transformation \eqref{1.9} 
then is a major ingredient in our proof that 
$\Lates (\cdot) S_{\te_0'-\te_0,\te_R'-\te_R}$ is a $2 \times 2$ matrix-valued 
Herglotz function (i.e., analytic on $\bbC_+$, the open complex upper 
half-plane, with a nonnegative imaginary part) in the special case where 
$\Hte$ is self-adjoint. (In this case, one necessarily assumes that 
$\te_0, \te_R, \te_0', \te_R' \in [0,2 \pi)$ and that $V$ is real-valued.) In addition, 
we derive the Herglotz representation of  
$\Lates (\cdot) S_{\te_0'-\te_0,\te_R'-\te_R}$ in terms of a $2 \times 2$ 
matrix-valued measure on $\bbR$. 

The principal result proved in Section \ref{s6} then concerns Krein-type resolvent formulas explicitely relating the resolvents of $\Hte$ and 
$H_{\theta_0',\theta_R'}$. A typical result to be proved in Theorrem \ref{t6.3} is 
of the form 
\begin{align}
& (H_{\theta_0',\theta_R'} -z I)^{-1} = (\Hte -z I)^{-1}  
 - \big[\gamma_{\ol{\theta_0^{\prime}},\ol{\theta_R^{\prime}}}
((\Hte)^* - {\ol z} I)^{-1}\big]^* S_{\te'_0-\te'_0,\te'_R-\te_R}^{-1}    \no \\
 & \hspace*{5.7cm} \times \Big[\Lates (z)\Big]^{-1}
 \big[\gamma_{\theta_0^{\prime},\theta_R^{\prime}}(\Hte - z I)^{-1}\big],   \no \\
& \quad  \te_0, \te_R, \te_0', \te_R' \in S_{2 \pi}, \; 
\te_0 \neq \te_0', \; \te_R \neq \te_R', \; 
z\in\bbC\big\backslash\big(\sigma(\Hte)\cup \sigma(H_{\theta_0',\theta_R'})\big).      \lb{1.10}
\end{align}
Formula \eqref{1.10} demonstrates why $\Lates$ is the ideal object for 
Krein-type resolvent formulas.

Finally, in Section \ref{s7}, we describe some additional connections 
between $\Late (z)$ and the Green's function $G_{\te_0,\te_R} (z,x,x')$ of $\Hte$, 
and then point out some interesting differences compared to the standard 
$2 \times 2$ Weyl--Titchmarsh matrix associated with $\Hte$.

For classical as well as recent fundamental  literature on Weyl--Titchmarsh
operators (i.e., spectral parameter dependent
Dirichlet-to-Neumann maps, or more generally, Robin-to-Robin maps, resp.,
Poincar\'e--Steklov operators), relevant in the context
of boundary value spaces (boundary triples, etc.), we refer, for
instance, to \cite{ABMN05}, \cite{AB09}, \cite{AP04}, \cite{BL07}, \cite{BMN08},
\cite{BMN00}--\cite{BGP08}, \cite{DHMS00}-- \cite{DM95}, 
\cite{GKMT01}--\cite{GMZ07}, \cite{GT00}, 
\cite[Ch.\ 3]{GG91}, \cite{Gr08a}, \cite[Ch.\ 13]{Gr09}, 
\cite{KO77}--\cite{KS66}, \cite{LT77}, \cite{MM06}, \cite{Ma04},
\cite{Pa87}, \cite{Pa02}, \cite{Po04}, \cite{Po08}, \cite{PR09}, 
\cite{Ry07}--\cite{Ry10}, \cite{TS77} and the references cited 
therein. 

Finally, we briefly summarize some of the notation used in this paper: Let $\cH$ be a
separable complex Hilbert space, $(\cdot,\cdot)_{\cH}$ the scalar product in $\cH$
(linear in the second argument), and $I_{\cH}$ the identity operator in $\cH$.
Next, let $T$ be a linear operator mapping (a subspace of) a
Banach space into another, with $\dom(T)$ and $\ker(T)$ denoting the
domain and kernel (i.e., null space) of $T$. 
The spectrum 
of a closed linear operator in $\cH$ will be denoted by $\sigma(\cdot)$. 
The Banach space of bounded linear operators on $\cH$ is
denoted by $\cB(\cH)$, the analogous notation $\cB(\cX_1,\cX_2)$,
will be used for bounded operators between two Banach spaces $\cX_1$ and
$\cX_2$. Moreover, $\cX_1\hookrightarrow \cX_2$ denotes the continuous 
embedding of $\cX_1$ into $\cX_2$.

\section{General Boundary Value Problems and Boundary Data Maps}   \label{s2}

This section is devoted to boundary data maps and their basic properties.

Taking $R>0$, and fixing $\te_0, \te_R\in S_{2 \pi}$, with $S_{2 \pi}$ the strip
\begin{equation}
S_{2 \pi}=\{z\in\bbC\,|\, 0\leq \Re(z) < 2 \pi\},
\end{equation}
we introduce the linear map $\gate$, the trace map associated with the boundary
$\{0,R\}$ of $(0,R)$ and the parameters $\te_0, \te_R$, by
\begin{equation}\label{2.2}
\gate \colon \begin{cases}
C^1({[0,R]}) \rightarrow \bbC^2, \\
u \mapsto \begin{bmatrix} \cos(\te_0)u(0) + \sin(\te_0)u'(0)\\
\cos(\te_R)u(R) - \sin(\te_R)u'(R) \end{bmatrix}, \end{cases}
\quad  \te_0, \te_R\in S_{2 \pi},
\end{equation}
where ``prime'' denotes $d/dx$. We note, in particular, that the Dirichlet trace
$\gamma_D$, and the Neumnann trace $\gamma_N$ (in connection with the outward pointing unit normal vector at $\partial (0,R) = \{0, R\}$), are given by
\begin{equation}
\gamma_D = \gamma_{0,0} = - \gamma_{\pi,\pi}, \quad
\gamma_N = \gamma_{3\pi/2,3\pi/2} = - \gamma_{\pi/2,\pi/2}.   \lb{2.2AA}
\end{equation}

Next, assuming
\begin{equation}
V\in L^1((0,R); dx),    \lb{2.2aa}
\end{equation}
we introduce the following family of densely defined closed linear operators
$\Hte$ in $L^2((0,R); dx)$,
\begin{align}
& \Hte f= -f'' + Vf,  \quad \te_0, \te_R\in S_{2 \pi},     \no   \\
& f\in\dom(\Hte)=\big\{ g \in L^2((0,R); dx)\,\big|\, g, g'\in AC({[0,R]}); \, \gate (g)=0;  \label{2.2a} \\
& \hspace*{6.75cm} (-g''+Vg)\in L^2((0,R); dx)\big\}.   \no
\end{align}
Here $AC([0,R])$ denotes the set of absolutely continuous functions on $[0,R]$.
We emphasize that $V$ is not assumed to be real-valued in the bulk of this paper.

One notices that
\begin{equation}
\gamma_{(\theta_0 + \pi) \, \text{\rm mod} (2\pi), (\theta_R + \pi) \, \text{\rm mod} (2\pi)}
= - \gate,  \quad  \te_0, \te_R\in S_{2 \pi},     \lb{2.2A}
\end{equation}
and, on the other hand,
\begin{equation}
H_{(\theta_0 + \pi) \, \text{\rm mod} (2\pi), (\theta_R + \pi) \, \text{\rm mod} (2\pi)} = \Hte,
\quad  \te_0, \te_R\in S_{2 \pi},     \lb{2.2B}
\end{equation}
hence it suffices to consider $\te_0, \te_R\in S_{\pi}=\{z\in\bbC\,|\, 0\leq \Re(z) < \pi\}$ rather than $\te_0, \te_R\in S_{2 \pi}$ in connection with $\Hte$, but for simplicity of notation we will keep using the strip $S_{2 \pi}$ throughout
this manuscript.

That $\Hte$ is indeed a closed operator follows, for instance, from \cite[Sect.\ XII.4]{DS88}, especially, by combining Lemma 5\,(c) and the first part of the proof of Lemma 26 and noting that $g(0), g'(0)$ (resp., $g(R), g'(R)$) are a complete set of boundary values for the minimal operator $H_{\rm min}$ associated with the differential expression
$-d^2/dx^2 + V(x)$ in $L^2((0,R); dx)$ at $x=0$ (resp., at $x=R$). Here
\begin{align}
& H_{\rm min} f= -f'' + Vf,   \no  \\
& f\in\dom(H_{\rm min})=\big\{ g \in L^2((0,R); dx)\,\big|\, g, g'\in AC({[0,R]});   \\
& \hspace*{1cm} g(0)=g'(0)=g(R)=g'(R)=0; \, (-g''+Vg)\in L^2((0,R); dx)\big\}.   \no
\end{align}
Morever, the adjoint of $\Hte$ is given by
\begin{align}
& (\Hte)^* f= -f'' + {\ol V} f,  \quad \te_0, \te_R\in S_{2 \pi},     \no   \\
& f\in\dom\big((\Hte)^*\big)=\big\{ g \in L^2((0,R); dx)\,\big|\, g, g'\in AC({[0,R]});
\, \gamma_{\ol{\theta_0},\ol{\theta_R}} (g)=0;  \no \\
& \hspace*{6.2cm}(-g''+{\ol V}g)\in L^2((0,R); dx)\big\}.
\end{align}

The fact that the spectrum of $\Hte$, $\sigma (\Hte)$, is discrete is well-known, but due to its importance in the context of this paper, we now briefly recall its proof following an argument in Marchenko \cite{Ma86}:

\begin{lemma} [See, \cite{Ma86}, Sect.\ 1.3]  \lb{l2.1}
Suppose $V\in L^1((0,R); dx)$, assume $\te_0, \te_R\in S_{2 \pi}$, and let $\Hte$ be defined as in \eqref{2.2a}. Then
$\sigma (\Hte)$ is an infinite discrete subset of $\bbC$ $($i.e., a set without any finite limit point in $\bbC$, but with a limit point at infinity$)$.
\end{lemma}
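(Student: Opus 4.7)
The plan is to follow the classical route of reducing discreteness of the spectrum to the fact that certain Wronskians are entire functions of $z$ of order $1/2$ and are not identically zero. First I would fix $\theta_0,\theta_R \in S_{2\pi}$ and construct two distinguished solutions of the eigenvalue equation $-u'' + Vu = zu$, namely $\phi_0(z,\cdot)$ satisfying the boundary condition at $x=0$ (i.e., $\cos(\theta_0)\phi_0(z,0)+\sin(\theta_0)\phi_0'(z,0)=0$, suitably normalized via $\sin(\theta_0)\phi_0(z,0)= -\cos(\theta_0)\phi_0'(z,0) \cdot(\text{const})$) and $\phi_R(z,\cdot)$ satisfying the boundary condition at $x=R$. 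Standard Volterra integral-equation arguments using $V\in L^1((0,R);dx)$ show that $\phi_0(z,\cdot)$ and $\phi_R(z,\cdot)$, together with their $x$-derivatives, are entire functions of $z\in\bbC$ of order $1/2$ (as functions of $z$), with the classical exponential bounds on compact $x$-intervals.

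Next, I would introduce the Wronskian
\begin{equation*}
W(z) = \phi_0(z,x)\phi_R'(z,x) - \phi_0'(z,x)\phi_R(z,x),
\end{equation*}
which is independent of $x\in[0,R]$ and hence an entire function of $z$ of order at most $1/2$. The key point is: $z_0\in\sigma(\Hte)$ if and only if $W(z_0)=0$. Indeed, if $W(z_0)=0$ then $\phi_0(z_0,\cdot)$ and $\phi_R(z_0,\cdot)$ are linearly dependent, yielding a nontrivial eigenfunction lying in $\dom(\Hte)$; conversely, any eigenfunction must, up to scalar, coincide with both $\phi_0$ and $\phi_R$ at $z_0$, forcing $W(z_0)=0$. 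Discreteness of the point spectrum (no finite accumulation point, and at most countably many eigenvalues) is then immediate from the identity theorem for entire functions, provided $W\not\equiv 0$.

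The non-triviality $W\not\equiv 0$ together with the actual existence of infinitely many zeros is the main obstacle. To handle it I would examine the asymptotic behavior as $z=-t^2$ with $t\to+\infty$ on the positive real axis: using the standard Volterra estimates one obtains
\begin{equation*}
\phi_0(z,x),\,\phi_R(z,x) \sim (\text{explicit exponentials in } tx),\qquad t\to+\infty,
\end{equation*}
uniformly on $[0,R]$, which leads to $|W(-t^2)|$ growing like $e^{tR}$ (up to polynomial/bounded factors) and in particular being nonzero for large $t$. Hence $W\not\equiv 0$. An entire function of order $1/2$ that is not identically zero has at most countably many zeros with no finite accumulation point, which establishes that $\sigma(\Hte)$ is a discrete subset of $\bbC$. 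Finally, to show $\sigma(\Hte)$ is infinite, I would invoke Hadamard's factorization theorem: an entire function of order $1/2$ with only finitely many zeros would be of the form $P(z) e^{az^{1/2}+b}$ with $P$ a polynomial, which is incompatible with $W$ being entire in $z$ (not merely in $z^{1/2}$) unless $W$ reduces to a polynomial; a polynomial $W$, however, is ruled out by the exponential growth rate established along the negative real axis. Therefore $W$ has infinitely many zeros, so $\sigma(\Hte)$ is an infinite discrete subset of $\bbC$ with a limit point only at infinity.

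I expect the routine steps (entire-ness of $\phi_0,\phi_R$ and their Wronskian, and the eigenvalue-zero correspondence) to go through verbatim as in \cite{Ma86}; the only delicate part, which one must treat separately when $\sin(\theta_0)$ or $\sin(\theta_R)$ vanishes, is the precise large-$|z|$ asymptotics of $W(z)$, but the same Volterra iteration with the appropriate choice of normalization at the respective endpoint yields the required lower bound.
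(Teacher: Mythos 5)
Your proposal is correct and follows essentially the same route as the paper: eigenvalues are characterized as the zeros of an entire function of $z$ (your Wronskian $W(z)$ of the two endpoint-adapted solutions coincides, up to a nonvanishing normalization factor, with the determinant $\Delta(z,R,\te_0,\te_R)$ of the boundary-condition matrix used in the paper), non-triviality and order $1/2$ are read off from the Volterra-iteration asymptotics, and infinitude of the zero set follows from the non-integer order. The only stylistic caveat is that the Hadamard step should be phrased as: order $1/2<1$ with finitely many zeros would force $W$ to be a polynomial (the exponential factor $e^{g(z)}$ must have $g$ constant), which the growth $\sim e^{R|z|^{1/2}}$ along $z=-t^{2}$ excludes.
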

\begin{proof} Fix $z\in\bbC$.
Let $\theta(z,\cdot), \theta^{\prime}(z,\cdot), \phi(z,\cdot),
\phi^{\prime}(z,\cdot)\in AC([0,R])$, and such that $\theta (z,\cdot)$ and $\phi (z,\cdot)$ are solutions of
$-\psi'' +V \psi=z \psi$ uniquely determined by the initial values at $x=0$,
\begin{equation}
\theta (z,0)=\phi^{\prime}(z,0)=1, \quad \theta^{\prime}(z,0)=\phi (z,0)=0.  \lb{2.2b}
\end{equation}
Consequently, $\theta (z,\cdot)$  and $\phi (z,\cdot)$ are entire with respect to $z$. Introducing
\begin{equation}
\psi(z,\cdot) = A \theta (z,\cdot) + B \phi (z,\cdot), \quad A, B \in \bbC,
\end{equation}
it follows that
\begin{equation}
\gate(\psi)
= \begin{bmatrix} \cos(\te_0) \psi(z,0) + \sin(\te_0) \psi'(z,0) \\[1mm]
\cos(\te_R) \psi(z,R) - \sin(\te_R) \psi'(z,R) \end{bmatrix} = 0 \in \bbC^2.  \lb{2.2c}
\end{equation}
Employing the initial conditions \eqref{2.2b}, one concludes that equation \eqref{2.2c}
is equivalent to
\begin{align}
0 &= \begin{bmatrix}  \cos(\te_0) &  \sin(\te_0) \\[1mm]
\cos(\te_R) \theta (z,R) - \sin(\te_R) \theta' (z,R) &
\cos(\te_R) \phi(z,R) - \sin(\te_R) \phi'(z,R) \end{bmatrix}
\begin{bmatrix} A \\[1mm]  B \end{bmatrix}   \no \\
& = \cU(z, R, \te_0, \te_R) \begin{bmatrix} A \\ B \end{bmatrix}.    \lb{2.2ca}
\end{align}
Consequently, $z_0$ is an eigenvalue of $\Hte$ if and only if $z_0$ is a zero of the
determinant $\Delta$ defined as
\begin{equation}
\Delta(z,R,\te_0,\te_R) =\det\big(\cU(z, R, \te_0, \te_R)\big).   \lb{2.2det}
\end{equation}
Thus, $\Delta$ is an entire function with respect to $z$, and an explicit computation reveals that
\begin{align}
\begin{split}
\Delta(z,R,\te_0,\te_R)&=\cos(\te_0)\cos(\te_R)\phi (z,R)
- \cos(\te_0)\sin(\te_R) \phi^{\prime} (z,R)  \\
& \quad - \sin(\te_0)\cos(\te_R) \theta (z,R) + \sin(\te_0)\sin(\te_R) \theta^{\prime} (z,R).   \lb{2.2d}
\end{split}
\end{align}
The standard Volterra integral equations
\begin{align}
\theta (z,x) &= \cos(z^{1/2}x) + \int_0^x dx' \, \f{\sin(z^{1/2}(x-x'))}{z^{1/2}} V(x')
\theta (z,x'),  \\
\phi (z,x) &= \f{\sin(z^{1/2}x)}{z^{1/2}} + \int_0^x dx' \, \f{\sin(z^{1/2}(x-x'))}{z^{1/2}} V(x')
\phi (z,x'),   \\
& \hspace*{3.5cm}  z\in\bbC, \; \Im(z^{1/2}) \geq 0, \; x\in [0,R],   \no
\end{align}
then imply that
\begin{align}
\begin{split}
\theta (z,x) & \underset{|z|\to\infty}{=} \cos(z^{1/2}x)
+ \Oh\Big(|z|^{-1/2}e^{\Im(z^{1/2})x}\Big), \\
\theta^{\prime}(z,x) & \underset{|z|\to\infty}{=} - z^{1/2} \sin(z^{1/2}x)
+ \Oh\Big(e^{\Im(z^{1/2})x}\Big), \\
\phi (z,x) & \underset{|z|\to\infty}{=} \f{\sin(z^{1/2}x)}{z^{1/2}}
+ \Oh\Big(|z|^{-1}e^{\Im(z^{1/2})x}\Big),    \lb{2.2e}  \\
\phi^{\prime}(z,x) & \underset{|z|\to\infty}{=} \cos(z^{1/2}x)
+ \Oh\Big(|z|^{-1/2}e^{\Im(z^{1/2})x}\Big).
\end{split}
\end{align}
A comparison of \eqref{2.2d} as $|z|\to\infty$ and \eqref{2.2e} demonstrates that
$\Delta$ does not vanish identically. Thus the set of zeros of $\Delta$, and hence
the set of eigenvalues of $\Hte$, constitutes a discrete set. Again, the asymptotic behavior of $\Delta$ near infinity implies that $\Delta$ is an entire function of order
$1/2$ and hence possesses infinitely many zeros (cf., e.g., \cite[p.\ 252]{Ti85}).
\end{proof}

In addition (cf.\ also \eqref{3.32}, \eqref{3.33}), the resolvent of $\Hte$ is clearly a Hilbert--Schmidt operator in $L^2((0,R); dx)$. In fact, it is even a trace class operator since the eigenvalues $E_{\te_0,\te_R,n}$ of $\Hte$ in the case of the separated boundary conditions at hand  are of the form
$E_{\te_0,\te_R,n} = [(n\pi/R) + (a_n/n)]^2$ with $a_n\in \ell^\infty(\bbN)$
as $n\to \infty$, as shown in \cite[Lemma 1.3.3]{Ma86}.

Having described the operator $\Hte$ is some detail, still assuming \eqref{2.2aa},
we now briefly recall the corresponding closed, sectorial, and densely defined
sequilinear form, denoted by $Q_{\Hte}$, associated with $\Hte$ (cf.\
\cite[p.\ 312, 321, 327--328]{Ka80}):
\begin{align}
& Q_{\Hte}(f,g) = \int_0^R dx \big[\ol{f'(x)} g'(x) + V(x) \ol{f(x)} g(x)\big]   \no \\
& \hspace*{2.35cm} - \cot(\te_0) \ol{f(0)} g(0) -  \cot(\te_R) \ol{f(R)} g(R), \lb{2.2f} \\
& f, g \in \dom(Q_{\Hte}) = \dom\big(|\Hte|^{1/2}\big) = H^1((0,R))  \no \\
& \quad = \big\{h\in L^2((0,R); dx) \,|\,
h \in AC ([0,R]); \, h' \in L^2((0,R); dx)\big\},   \no \\
& \hspace*{6.5cm}  \te_0, \te_R \in S_{2 \pi}\backslash\{0,\pi\},   \no \\
& Q_{H_{0,\te_R}}(f,g) = \int_0^R dx \big[\ol{f'(x)} g'(x) + V(x) \ol{f(x)} g(x)\big]
-  \cot(\te_R) \ol{f(R)} g(R),  \lb{2.2g} \\
&  f, g \in \dom(Q_{H_{0,\te_R}}) = \dom\big(|H_{0,\te_R}|^{1/2}\big)   \no \\
& \quad = \big\{h\in L^2((0,R); dx) \,|\,
h \in AC ([0,R]); \, h(0) =0; \, h' \in L^2((0,R); dx)\big\},   \no \\
& \hspace*{8.5cm}  \te_R \in S_{2 \pi}\backslash\{0,\pi\},   \no \\
& Q_{H_{\te_0,0}}(f,g) = \int_0^R dx \big[\ol{f'(x)} g'(x) + V(x) \ol{f(x)} g(x)\big]
-  \cot(\te_0) \ol{f(0)} g(0),  \lb{2.2h} \\
&  f, g \in \dom(Q_{H_{\te_0,0}}) = \dom\big(|H_{\te_0,0}|^{1/2}\big)   \no \\
& \quad = \big\{h\in L^2((0,R); dx) \,|\,
h \in AC ([0,R]); \, h(R) =0; \, h' \in L^2((0,R); dx)\big\},   \no \\
& \hspace*{8.67cm}  \te_0 \in S_{2 \pi}\backslash\{0,\pi\},   \no \\
& Q_{H_{0,0}}(f,g) = \int_0^R dx \big[\ol{f'(x)} g'(x) + V(x) \ol{f(x)} g(x)\big],  \lb{2.2i} \\
& f, g \in \dom(Q_{H_{0,0}}) = \dom\big(|H_{0,0}|^{1/2}\big) = H^1_0((0,R))  \no \\
& \quad = \big\{h\in L^2((0,R); dx) \,|\,
h \in AC([0,R]); \, h(0)=0, \, h(R) =0;   \no \\
& \hspace*{6.65cm}   h' \in L^2((0,R); dx)\big\}.   \no
\end{align}

Equations \eqref{2.2f}--\eqref{2.2i} follow from the fact that for any $\varepsilon>0$,
there exists $\eta(\varepsilon)>0$ such that for all $h \in H^1((0,R))$,
\begin{align}
& |h(x_0)| \leq \varepsilon \|h' \|_{L^2((0,R); dx)} +
\eta(\varepsilon) \|h \|_{L^2((0,R); dx)}, \quad x_0 \in [0,R],    \\
& \||V|^{1/2} h\|_{L^2((0,R); dx)} \leq \varepsilon \|h' \|_{L^2((0,R); dx)} +
\eta(\varepsilon) \|h \|_{L^2((0,R); dx)}
\end{align}
(cf.\ \cite[p.\ 193, 345--346]{Ka80}).

Next, we recall the following elementary, yet fundamental, fact:

\begin{lemma} \lb{l2.2}
Suppose that $V\in L^1((0,R); dx)$, fix $\te_0, \te_R\in S_{2 \pi}$, and assume
that $z\in\CR$. Then the boundary value problem given by
\begin{align}
-&u'' + Vu=zu,\quad u, u'\in AC([0,R]),   \label{2.3}\\
&\gate(u)=\begin{bmatrix}c_0\\ c_R \end{bmatrix}\in\bbC^2,    \label{2.4}
\end{align}
has a unique solution $u(z,\cdot)=u(z,\cdot\,;(\te_0,c_0),(\te_R,c_R))$ for each $c_0, c_R\in\bbC$.
\end{lemma}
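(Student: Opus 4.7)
The plan is to reduce the boundary value problem \eqref{2.3}, \eqref{2.4} to a $2 \times 2$ linear algebraic system whose determinant is precisely the function $\Delta(z,R,\te_0,\te_R)$ introduced in the proof of Lemma \ref{l2.1}; invertibility then follows automatically from the hypothesis $z\notin\sigma(\Hte)$.

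First I would invoke the fundamental solutions $\theta(z,\cdot)$ and $\phi(z,\cdot)$ from the proof of Lemma \ref{l2.1}, uniquely determined by the initial conditions \eqref{2.2b} via the Volterra integral equations written there. Under the hypothesis $V\in L^1((0,R);dx)$ these Volterra equations have unique solutions with $\theta(z,\cdot), \theta'(z,\cdot), \phi(z,\cdot), \phi'(z,\cdot)\in AC([0,R])$, and a direct computation of the Wronskian together with \eqref{2.2b} yields $W(\theta(z,\cdot),\phi(z,\cdot))=1$, so the two solutions are linearly independent. Consequently, every solution $u$ of $-u''+Vu=zu$ on $[0,R]$ with $u, u'\in AC([0,R])$ is uniquely representable as
\begin{equation*}
u(z,x) = A\,\theta(z,x) + B\,\phi(z,x), \qquad A, B\in\bbC.
\end{equation*}

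Next, inserting this representation into the boundary condition \eqref{2.4} and using \eqref{2.2b} produces, exactly as in \eqref{2.2ca}, the linear system
\begin{equation*}
\cU(z,R,\te_0,\te_R) \begin{bmatrix} A \\ B \end{bmatrix}
= \begin{bmatrix} c_0 \\ c_R \end{bmatrix},
\end{equation*}
with $\cU$ the $2\times 2$ matrix displayed there. By definition \eqref{2.2det}, $\det(\cU(z,R,\te_0,\te_R))=\Delta(z,R,\te_0,\te_R)$. Lemma \ref{l2.1} establishes that the spectrum $\sigma(\Hte)$ is purely discrete and coincides with the zero set of $\Delta(\,\cdot\,,R,\te_0,\te_R)$. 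Since by hypothesis $z\in\CR$, we have $\Delta(z,R,\te_0,\te_R)\neq 0$, so $\cU(z,R,\te_0,\te_R)$ is invertible. This yields the unique pair $(A,B)=\cU(z,R,\te_0,\te_R)^{-1}(c_0,c_R)^\top$, and hence the unique solution $u(z,\cdot\,;(\te_0,c_0),(\te_R,c_R))$ of \eqref{2.3}, \eqref{2.4}.

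I do not anticipate a genuine obstacle; the argument is essentially bookkeeping once the spectral characterization of $\Delta$ is in hand from Lemma \ref{l2.1}. The only mild care required is to ensure that the hypothesis $V\in L^1((0,R);dx)$ (rather than, say, $V$ locally bounded) suffices for the Volterra representation and for both $u$ and $u'$ to lie in $AC([0,R])$, which is a standard contraction-mapping argument already implicit in the preceding lemma.
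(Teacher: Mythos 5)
Your proposal is correct and follows essentially the same route as the paper: reduce the boundary value problem to a $2\times 2$ linear system for the coefficients of a solution basis and observe that the determinant vanishes exactly at the eigenvalues of $\Hte$. The only cosmetic difference is that you specialize to the fundamental system $\theta(z,\cdot),\phi(z,\cdot)$ and cite the characterization of $\sigma(\Hte)$ as the zero set of $\Delta$ from Lemma \ref{l2.1}, whereas the paper works with an arbitrary basis $\psi_1,\psi_2$ and re-derives in situ that a vanishing determinant is equivalent to the existence of a nontrivial homogeneous solution, i.e., to $z\in\sigma(\Hte)$.
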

\begin{proof}
This is well-known, but for 
the sake of completeness, we briefly recall the argument: Let $\psi_j(z,\cdot)$, $j=1,2$, be a basis for the solutions of \eqref{2.3} and let
$\psi(z,\cdot)=A\psi_1(z,\cdot)+B\psi_2(z,\cdot)$, $A,B\in\bbC$, be the general solution of
\eqref{2.3}. Then
\begin{align}
\begin{split}
\gate(\psi(z,\cdot)) &= \begin{bmatrix}\gate(\psi_1(z,\cdot)) & \gate(\psi_2(z,\cdot))
\end{bmatrix} \begin{bmatrix} A \\ B \end{bmatrix}   \\
& = \begin{bmatrix} M_{1,1}(z) & M_{1,2}(z) \\ M_{2,1}(z) & M_{2,2}(z)
\end{bmatrix} \begin{bmatrix} A\\ B \end{bmatrix},     \label{2.4a}
\end{split}
\end{align}
where
\begin{align}
\begin{split}
M_{1,1}(z) &= \cos(\te_0)\psi_1(z,0) + \sin(\te_0)\psi_1'(z,0),   \\
M_{1,2}(z) &= \cos(\te_0)\psi_2(z,0) + \sin(\te_0)\psi_2'(z,0),   \\
M_{2,1}(z) &= \cos(\te_R)\psi_1(z,R) - \sin(\te_R)\psi_1'(z,R),  \\
M_{2,2}(z) &= \cos(\te_R)\psi_2(z,R)  - \sin(\te_R)\psi_2'(z,R).
\end{split}
\end{align}
Thus, prescribing $c_0, c_R \in\bbC$, the equation
\begin{equation}
\gate(\psi(z,\cdot))=\begin{bmatrix}c_0\\ c_R \end{bmatrix}
\end{equation}
is uniquely solvable in terms of some $A,B\in\bbC$ if and only if
\begin{align}
\det\big(\begin{bmatrix}\gate(\psi_1(z,\cdot)) & \gate(\psi_2(z,\cdot))\end{bmatrix}\big)
=\det\left(\begin{bmatrix} M_{1,1}(z) & M_{1,2}(z) \\ M_{2,1}(z) & M_{2,2}(z)
\end{bmatrix} \right)  \ne 0.   \label{2.10}
\end{align}

On the other hand, this determinant equals zero for some $z_0\in\bbC$, if and only if  there is a nonzero vector
$\begin{bmatrix} A_0 & B_0 \end{bmatrix}^\top \in\bbC^2$ such that
\begin{equation}
\begin{bmatrix}\gate(\psi_1(z_0,\cdot)) & \gate(\psi_2(z_0,\cdot))\end{bmatrix}
\begin{bmatrix} A_0 \\ B_0 \end{bmatrix} =0
\end{equation}
which is equivalent to the existence of a nonzero solution
$\psi_0(z_0,\cdot)= A_0 \psi_1(z_0,\cdot) + B_0 \psi_2(z_0,\cdot)$ of the corresponding boundary value problem given by \eqref{2.3} and \eqref{2.4} with $z=z_0$ and homogeneous boundary conditions (i.e., with $c_0=c_R=0$). Equivalently, $\psi_0(z_0,\cdot)$ satisfies
\begin{equation}
\Hte \psi_0(z_0,\cdot) = z_0 \psi_0(z_0,\cdot), \quad \psi_0(z_0,\cdot) \in \dom(\Hte),
\end{equation}
which in turn is equivalent to $z_0\in\sigma(\Hte)$.
\end{proof}

Assuming $z\in\bbC\backslash\sigma(\Hte)$, a basis for the solutions of
\eqref{2.3} is given by
\begin{equation}\label{2.5}
\begin{split}
u_{-,\te_0}(z,\cdot)&=u(z,\cdot \, ;(\te_0,0),(0,1)),    \\
u_{+,\te_R}(z,\cdot)&=u(z,\cdot \, ;(0,1),(\te_R,0)).
\end{split}
\end{equation}
Explicitly, one then has
\begin{align}
u_{-,\te_0}(z,R)&= 1, \quad
\cos(\te_0)u_{-,\te_0}(z,0) + \sin(\te_0)u_{-,\te_0}'(z,0) =0,    \label{2.5a}\\
u_{+,\te_R}(z,0)&= 1, \quad
\cos(\te_R)u_{+,\te_R}(z,R) - \sin(\te_R)u_{+,\te_R}'(z,R) =0.     \label{2.5b}
\end{align}
Recalling the Wronskian of two functions $f$ and $g$,
\begin{equation}
W(f,g)(x) = f(x)g'(x) - f'(x)g(x), \quad f, g \in C^1([0,R]), 
\end{equation}
one then computes
\begin{align}
& W(u_{+,\te_R}(z,\cdot), u_{-,\te_0}(z,\cdot))   \no \\
& \quad = u_{+,\te_R}(z,x) u'_{-,\te_0}(z,x)- u'_{+,\te_R}(z,x) u_{-,\te_0}(z,x) \neq 0,
\quad x \in [0,R],  \no \\
& \quad = u'_{-,\te_0}(z,0)- u'_{+,\te_R}(z,0), u_{-,\te_0}(z,0)     \lb{2.5c} \\
& \quad = u_{+,\te_R}(z,R) u'_{-,\te_0}(z,R)- u'_{+,\te_R}(z,R).   \lb{2.5d}
\end{align}

To each boundary value problem \eqref{2.3}, \eqref{2.4}, we now associate a family of \emph{general boundary data maps}, $\Lates (z) : \bbC^2 \rightarrow \bbC^2$, for
$\te_0, \te_R, \te_0^{\prime},\te_R^{\prime}\in S_{2 \pi}$,  where
\begin{align}\label{2.6}
\begin{split}
\Lates (z) \begin{bmatrix}c_0\\ c_R \end{bmatrix} &=
\Lates (z) \big(\gate(u(z,\cdot\ ;(\te_0,c_0),(\te_R,c_R)))\big)   \\
&= \gates(u(z,\cdot\ ;(\te_0,c_0),(\te_R,c_R))).
\end{split}
\end{align}
With $u(z,\cdot)=u(z,\cdot\ ;(\te_0,c_0),(\te_R,c_R))$, then $\Lates (z) $ can be represented as a $2\times 2$ complex matrix, where
\begin{align}\label{2.7}
\Lates (z) \begin{bmatrix}c_0\\ c_R \end{bmatrix} &=
\Lates (z) \begin{bmatrix} \cos(\te_0)u(z,0) + \sin(\te_0)u'(z,0)\\[1mm]
\cos(\te_R)u(z,R) - \sin(\te_R)u'(z,R) \end{bmatrix}  \no \\
&=
\begin{bmatrix} \cos(\te_0^{\prime})u(z,0) + \sin(\te_0^{\prime})u'(z,0)\\[1mm]
\cos(\te_R^{\prime})u(z,R) - \sin(\te_R^{\prime})u'(z,R) \end{bmatrix}.
\end{align}

The following result shows that $\Lates$ is well-defined for 
$z\in\bbC\backslash\sigma(\Hte)$, that is, it is invariant with respect to a change of basis of solutions of \eqref{2.3}.

\begin{theorem}\label{t2.3}
Let $\te_0, \te_R, \te_0^{\prime},\te_R^{\prime}\in S_{2 \pi}$ and
$z\in\bbC\backslash\sigma(\Hte)$. In addition,
denote by $\psi_j(z,\cdot)$, $j=1,2$, a basis for the solutions of \eqref{2.3}. Then,
\begin{align}\label{2.9}
& \Lates (z)  \\
& \;\; =\begin{bmatrix}\cos(\te_0^{\prime})\psi_1(z,0) + \sin(\te_0^{\prime})\psi_1'(z,0)&\cos(\te_0^{\prime})\psi_2(z,0) + \sin(\te_0^{\prime})\psi_2'(z,0)\\[1mm]
\cos(\te_R^{\prime})\psi_1(z,R) - \sin(\te_R^{\prime})\psi_1'(z,R)&\cos(\te_R^{\prime})\psi_2(z,R) - \sin(\te_R^{\prime})\psi_2'(z,R)\end{bmatrix}   \no \\
& \;\;\;\; \times
\begin{bmatrix}\cos(\te_0)\psi_1(z,0) + \sin(\te_0)\psi_1'(z,0)&\cos(\te_0)\psi_2(z,0) + \sin(\te_0)\psi_2'(z,0)\\[1mm]
\cos(\te_R)\psi_1(z,R) - \sin(\te_R)\psi_1'(z,R)&\cos(\te_R)\psi_2(z,R) - \sin(\te_R)\psi_2'(z,R)\end{bmatrix}^{-1}.  \no
\end{align}
Moreover, $\Lates (z) $ is invariant with respect to a change of basis for the solutions of \eqref{2.3}.
\end{theorem}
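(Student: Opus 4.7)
The plan is to derive \eqref{2.9} by expanding an arbitrary solution of \eqref{2.3} in the basis $\{\psi_1(z,\cdot), \psi_2(z,\cdot)\}$, tracking how the trace maps $\gate$ and $\gates$ act through this expansion, and then observing that the resulting matrix product is independent of the chosen basis.

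First, fix $z \in \bbC \backslash \sigma(\Hte)$ and $(c_0, c_R)^\top \in \bbC^2$, and let $u(z,\cdot) = u(z,\cdot\,;(\te_0,c_0),(\te_R,c_R))$ denote the unique solution provided by Lemma \ref{l2.2}. Writing $u(z,\cdot) = A\psi_1(z,\cdot) + B\psi_2(z,\cdot)$ for some $A, B \in \bbC$, and applying $\gate$ entrywise, I obtain the linear system
\begin{equation*}
\begin{bmatrix} c_0 \\ c_R \end{bmatrix} = \gate(u(z,\cdot)) = N_{\te_0,\te_R}(z) \begin{bmatrix} A \\ B \end{bmatrix},
\end{equation*}
where $N_{\te_0,\te_R}(z)$ coincides with the second matrix on the right-hand side of \eqref{2.9}. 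Exactly as in the proof of Lemma \ref{l2.2} (cf.\ \eqref{2.4a}--\eqref{2.10}), the invertibility of $N_{\te_0,\te_R}(z)$ for $z \in \bbC \backslash \sigma(\Hte)$ is equivalent to the absence of a nontrivial solution with homogeneous $\gate$-data, and hence holds here.

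Next, applying $\gates$ to the same expansion yields $\gates(u(z,\cdot)) = N_{\te_0^{\prime},\te_R^{\prime}}(z) (A, B)^\top$, where $N_{\te_0^{\prime},\te_R^{\prime}}(z)$ is the first matrix on the right-hand side of \eqref{2.9}. Substituting $(A, B)^\top = N_{\te_0,\te_R}(z)^{-1} (c_0, c_R)^\top$ into this identity and invoking the defining relation \eqref{2.6}, I arrive at
\begin{equation*}
\Lates(z) \begin{bmatrix} c_0 \\ c_R \end{bmatrix} = N_{\te_0^{\prime},\te_R^{\prime}}(z) \, N_{\te_0,\te_R}(z)^{-1} \begin{bmatrix} c_0 \\ c_R \end{bmatrix},
\end{equation*}
and since $(c_0,c_R)^\top \in \bbC^2$ was arbitrary, this establishes \eqref{2.9}.

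Finally, for the change-of-basis invariance, I suppose $\{\widetilde{\psi}_1(z,\cdot), \widetilde{\psi}_2(z,\cdot)\}$ is another basis related to $\{\psi_1,\psi_2\}$ through an invertible $C \in \bbC^{2\times 2}$ via $\widetilde{\psi}_j = \sum_{k=1}^2 \psi_k C_{k,j}$. The linearity of the trace combinations in \eqref{2.2} with respect to $u$ then gives $\widetilde{N}_{\te_0,\te_R}(z) = N_{\te_0,\te_R}(z) \, C$ and, analogously, $\widetilde{N}_{\te_0^{\prime},\te_R^{\prime}}(z) = N_{\te_0^{\prime},\te_R^{\prime}}(z) \, C$, so the product $\widetilde{N}_{\te_0^{\prime},\te_R^{\prime}}(z) \, \widetilde{N}_{\te_0,\te_R}(z)^{-1}$ collapses to $N_{\te_0^{\prime},\te_R^{\prime}}(z) \, N_{\te_0,\te_R}(z)^{-1}$, proving independence of the basis. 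No step presents a genuine obstacle; the only subtlety is that the invertibility of $N_{\te_0,\te_R}(z)$ is precisely the content of $z \notin \sigma(\Hte)$ already recorded in Lemma \ref{l2.2}.
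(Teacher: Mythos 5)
Your proposal is correct and follows essentially the same route as the paper: both expand a general solution in the basis $\{\psi_1,\psi_2\}$, identify $\gate(u)$ and $\gates(u)$ with the two trace matrices acting on the coefficient vector, invoke the invertibility established in Lemma \ref{l2.2} (equation \eqref{2.10}), and obtain the invariance by noting that a change of basis multiplies both trace matrices on the right by the same nonsingular matrix, which cancels in the product. The only cosmetic difference is that the paper quantifies over the coefficient vector $(A,B)^\top$ while you quantify over the boundary data $(c_0,c_R)^\top$; these are equivalent via the invertibility of $N_{\te_0,\te_R}(z)$.
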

\begin{proof}
Letting $\psi(z,\cdot)=A\psi_1(z,\cdot)+B\psi_2(z,\cdot)$, $A,B\in\bbC$, be an arbitrary solution of \eqref{2.3}, one observes, by \eqref{2.4a} and \eqref{2.7}, that the equation
$\Lates(\gate(\psi))=\gates(\psi)$ becomes
\begin{align}\label{2.13}
& \Lates(z)   \\
& \; \times
\begin{bmatrix}\cos(\te_0)\psi_1(z,0) + \sin(\te_0)\psi_1'(z,0)&\cos(\te_0)\psi_2(z,0) + \sin(\te_0)\psi_2'(z,0)\\[1mm]
\cos(\te_R)\psi_1(z,R) - \sin(\te_R)\psi_1'(z,R)&\cos(\te_R)\psi_2(z,R) - \sin(\te_R)\psi_2'(z,R)\end{bmatrix}\begin{bmatrix}A\\B\end{bmatrix}  \no \\
& \;\;
=\begin{bmatrix}\cos(\te_0^{\prime})\psi_1(z,0) + \sin(\te_0^{\prime})\psi_1'(z,0)&\cos(\te_0^{\prime})\psi_2(z,0) + \sin(\te_0^{\prime})\psi_2'(z,0)\\[1mm]
\cos(\te_R^{\prime})\psi_1(z,R) - \sin(\te_R^{\prime})\psi_1'(z,R)&\cos(\te_R^{\prime})\psi_2(z,R) - \sin(\te_R^{\prime})\psi_2'(z,R)\end{bmatrix}\begin{bmatrix}A\\B\end{bmatrix}  \no
\end{align}
for every $\begin{bmatrix}A & B\end{bmatrix}^\top \in\bbC^2$. Equation \eqref{2.9} then follows by the invertibility of $\begin{bmatrix}\gate(\psi_1) & \gate(\psi_2)\end{bmatrix}$ noted in \eqref{2.10}.

Let $\phi_j(z,\cdot)$, $j=1,2$, denote a second basis for the solutions of \eqref{2.3}. Then, there is a nonsingular matrix $K\in\bbC^{2\times 2}$ such that
$\begin{bmatrix}\psi_1 & \psi_2\end{bmatrix}=\begin{bmatrix}\phi_1 & \phi_2
\end{bmatrix}K$. Next, we introduce for each pair, $\te_0, \te_R\in S_{2 \pi}$,  the following matrices
 \begin{equation}\label{2.14}
\Cte=\begin{bmatrix}\cos(\te_0)&0\\0&\cos(\te_R) \end{bmatrix},\quad 
\Ste=\begin{bmatrix} \sin(\te_0)&0\\0&\sin(\te_R)\end{bmatrix}.
 \end{equation}
Introducing $\te_j$ to denote $\te_0$, and $\te_k$ to denote $\te_R$, respectively; or, using $\te_j$ to denote $\te_0^{\prime}$ and $\te_k$ to denote $\te_R^{\prime}$, one computes,
\begin{align}
& \begin{bmatrix}\ga_{\te_j,\te_k}(\psi_1(z,\cdot)) & \ga_{\te_j,\te_k}(\psi_2(z,\cdot))\end{bmatrix}   \no \\
& \quad = C_{\te_j,\te_k}\begin{bmatrix} \psi_1(z,0)&\psi_2(z,0)\\ \psi_1(z,R)&\psi_2(z,R)\end{bmatrix} +
S_{\te_j,\te_k}\begin{bmatrix} \psi'_1(z,0)&\psi'_2(z,0)\\ -\psi'_1(z,R)&-\psi'_2(z,R)\end{bmatrix}  \no  \\
& \quad = \bigg(C_{\te_j,\te_k}\begin{bmatrix} \phi_1(z,0)&\phi_2(z,0)\\ \phi_1(z,R)&\phi_2(z,R)\end{bmatrix} +
S_{\te_j,\te_k}\begin{bmatrix} \phi'_1(z,0)&\phi'_2(z,0)\\ -\phi'_1(z,R)&-\phi'_2(z,R)\end{bmatrix}\bigg)K   \no \\
& \quad = \begin{bmatrix}\ga_{\te_j,\te_k}(\phi_1(z,\cdot)) & \ga_{\te_j,\te_k}(\phi_2(z,\cdot))
\end{bmatrix} K.    \label{2.16}
\end{align}
As defined in \eqref{2.9}, $\Lates=\begin{bmatrix}\ga_{\te_0^{\prime},\te_R^{\prime}}(\psi_1) & \ga_{\te_0^{\prime},\te_R^{\prime}}(\psi_2)\end{bmatrix}\begin{bmatrix}
\ga_{\te_0,\te_R}(\psi_1) & \ga_{\te_0,\te_R}(\psi_2)\end{bmatrix}^{-1}$, and by
\eqref{2.16},
\begin{align}\label{2.32}
\begin{split}
\Lates & = \begin{bmatrix}\ga_{\te_0^{\prime},\te_R^{\prime}}(\psi_1) &
\ga_{\te_0^{\prime},\te_R^{\prime}}(\psi_2)\end{bmatrix}\begin{bmatrix}\ga_{\te_0,\te_R}(\psi_1) &\ga_{\te_0,\te_R}(\psi_2)\end{bmatrix}^{-1}   \\
& =\begin{bmatrix}\ga_{\te_0^{\prime},\te_R^{\prime}}(\phi_1) &
\ga_{\te_0^{\prime},\te_R^{\prime}}(\phi_2)\end{bmatrix}\begin{bmatrix}[\ga_{\te_0,\te_R}(\phi_1) & \ga_{\te_0,\te_R}(\phi_2)\end{bmatrix}^{-1},
\end{split}
\end{align}
completing the proof.
\end{proof}

Theorem \ref{t2.3} then readily implies the following result:

\begin{corollary} \lb{c2.4}
Let $\te_0,\te_R, \te_0^{\prime},\te_R^{\prime}, \te_0^{\prime\prime}, \te_R^{\prime\prime}\in S_{2 \pi}$. Then, with $I_2$ denoting the identity matrix in $\bbC^2$,
\begin{align}
& \La_{\te_0,\te_R}^{\te_0,\te_R} (z) = I_2,   \quad
z\in\bbC \backslash \sigma(\Hte),      \\
& \La_{\te_0^{\prime},\te_R^{\prime}}^{\te_0^{\prime\prime},\te_R^{\prime\prime}} (z)
\Lates (z) =\La_{\te_0,\te_R}^{\te_0^{\prime\prime},\te_R^{\prime\prime}} (z) , \quad
z\in\bbC\backslash\big(\sigma(\Hte)\cup\sigma(H_{\theta'_0,\theta'_R})\big),    \\
& \La_{\te_0^{\prime},\te_R^{\prime}}^{\te_0,\te_R} (z) = \Big[\Lates (z)\Big]^{-1}, \quad
z\in\bbC\backslash\big(\sigma(\Hte)\cup\sigma(H_{\theta'_0,\theta'_R})\big).  \lb{2.48}
\end{align}
\end{corollary}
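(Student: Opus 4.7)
\medskip

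\noindent\textbf{Proof proposal.} The plan is to exploit the explicit matrix representation of $\Lates(z)$ obtained in Theorem \ref{t2.3}. Fix a basis $\psi_1(z,\cdot), \psi_2(z,\cdot)$ for the solutions of \eqref{2.3}, and for any pair $\alpha, \beta \in S_{2 \pi}$ abbreviate
\begin{equation*}
M_{\alpha,\beta}(z) = \begin{bmatrix} \gamma_{\alpha,\beta}(\psi_1(z,\cdot)) & \gamma_{\alpha,\beta}(\psi_2(z,\cdot)) \end{bmatrix}
\in \bbC^{2\times 2}.
\end{equation*}
By formula \eqref{2.9} of Theorem \ref{t2.3}, whenever $z \in \bbC \setminus \sigma(H_{\alpha,\beta})$ so that $M_{\alpha,\beta}(z)$ is invertible (via \eqref{2.10} in the proof of Lemma \ref{l2.2}), one has
\begin{equation*}
\La_{\alpha,\beta}^{\alpha',\beta'}(z) = M_{\alpha',\beta'}(z) \, [M_{\alpha,\beta}(z)]^{-1}.
\end{equation*}

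First I would verify identity (i): taking $\te_0' = \te_0$ and $\te_R' = \te_R$ in the above representation immediately yields $\La_{\te_0,\te_R}^{\te_0,\te_R}(z) = M_{\te_0,\te_R}(z) [M_{\te_0,\te_R}(z)]^{-1} = I_2$ on the resolvent set of $\Hte$.

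Next, for identity (ii), I would simply compose the representations: for $z \in \bbC \setminus (\sigma(\Hte) \cup \sigma(H_{\te_0',\te_R'}))$, both $M_{\te_0,\te_R}(z)$ and $M_{\te_0',\te_R'}(z)$ are invertible, so
\begin{equation*}
\La_{\te_0',\te_R'}^{\te_0'',\te_R''}(z) \, \Lates(z)
= M_{\te_0'',\te_R''}(z) [M_{\te_0',\te_R'}(z)]^{-1} M_{\te_0',\te_R'}(z) [M_{\te_0,\te_R}(z)]^{-1}
= M_{\te_0'',\te_R''}(z) [M_{\te_0,\te_R}(z)]^{-1},
\end{equation*}
which equals $\La_{\te_0,\te_R}^{\te_0'',\te_R''}(z)$ by Theorem \ref{t2.3}. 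Finally, identity (iii) follows from (i) and (ii) by setting $(\te_0'',\te_R'') = (\te_0,\te_R)$: the product $\La_{\te_0',\te_R'}^{\te_0,\te_R}(z) \, \Lates(z)$ collapses to $\La_{\te_0,\te_R}^{\te_0,\te_R}(z) = I_2$, identifying $\La_{\te_0',\te_R'}^{\te_0,\te_R}(z)$ with $[\Lates(z)]^{-1}$.

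There is really no hard step here; the only subtlety is tracking the resolvent sets on which the relevant matrices $M_{\alpha,\beta}(z)$ are invertible, but this is already guaranteed by Lemma \ref{l2.2} and Theorem \ref{t2.3}, so the entire argument is a short matrix-algebra manipulation.
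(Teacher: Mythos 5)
Your proposal is correct and is precisely the argument the paper intends: the paper gives no explicit proof, stating only that Theorem \ref{t2.3} ``readily implies'' the corollary, and your use of the representation $\Lates(z)=M_{\te_0',\te_R'}(z)[M_{\te_0,\te_R}(z)]^{-1}$ from \eqref{2.9}, together with the invertibility guaranteed by \eqref{2.10}, is exactly that intended short matrix computation. The bookkeeping of resolvent sets and the derivation of \eqref{2.48} from the first two identities are both handled correctly.
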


\begin{remark} \lb{r2.5}
By Theorem \ref{t2.3},  $\Lates$ is invariant with respect to a change of basis for the solutions of \eqref{2.3}. However, the representation of $\Lates$ with respect to a specific basis can be simplified considerably with an appropriate choice of basis. For example, by choosing the basis given in \eqref{2.5}, and by letting
$\psi_1(z,\cdot)=u_{+,\te_R}(z,\cdot)=u(z,\cdot \, ;(0,1),(\te_R,0))$, and
$\psi_2(z,\cdot)=u_{-,\te_0}(z,\cdot)=u(z,\cdot \, ;(\te_0,0),(0,1))$, \eqref{2.32} 
implies that, entrywise, 
\begin{align}
\begin{split}
& \begin{bmatrix}\gate(u_{+,\te_R}(z,\cdot))
&  \gate(u_{-,\te_0}(z,\cdot))\end{bmatrix}_{1,1}   \\
& \quad = \cos(\te_0) + \sin(\te_0)u_{+,\te_R}'(z,0),   \\
& \begin{bmatrix}\gate(u_{+,\te_R}(z,\cdot))
&  \gate(u_{-,\te_0}(z,\cdot))\end{bmatrix}_{1,2}   \\
& \quad = \cos(\te_0)u_{-,\te_0}(z,0) + \sin(\te_0)u_{-,\te_0}'(z,0),    \\
& \begin{bmatrix}\gate(u_{+,\te_R}(z,\cdot))
&  \gate(u_{-,\te_0}(z,\cdot))\end{bmatrix}_{2,1}    \\
& \quad = \cos(\te_R)u_{+,\te_R}(z,R) - \sin(\te_R)u_{+,\te_R}'(z,R),    \\
& \begin{bmatrix}\gate(u_{+,\te_R}(z,\cdot))
&  \gate(u_{-,\te_0}(z,\cdot))\end{bmatrix}_{2,1}    \\
& \quad = \cos(\te_R) - \sin(\te_R)u_{-,\te_0}'(z,R).
\end{split}
\end{align}
Hence, for this basis,
\begin{align}
& \Lates (z) = \Big[\Big(\Lates (z)\Big)_{j,k}\Big]_{1 \leq j,k \leq 2},
\quad z\in\bbC\backslash\sigma(\Hte),    \label{2.20} \\[1mm]
\begin{split}
& \Big(\Lates (z)\Big)_{1,1} = \frac{\cos(\te_0^{\prime})
+ \sin(\te_0^{\prime})u_{+,\te_R}'(z,0)}{\cos(\te_0) + \sin(\te_0)u_{+,\te_R}'(z,0)},  \\
& \Big(\Lates (z)\Big)_{1,2} = \frac{\cos(\te_0^{\prime})u_{-,\te_0}(z,0) + \sin(\te_0^{\prime})u_{-,\te_0}'(z,0)}{\cos(\te_R) - \sin(\te_R)u_{-,\te_0}'(z,R)},    \\
& \Big(\Lates (z)\Big)_{2,1} = \frac{\cos(\te_R^{\prime})u_{+,\te_R}(z,R)
- \sin(\te_R^{\prime})u_{+,\te_R}'(z,R)}{\cos(\te_0) + \sin(\te_0)u_{+,\te_R}'(z,0)},   \\
& \Big(\Lates (z)\Big)_{2,2} = \frac{\cos(\te_R^{\prime})
- \sin(\te_R^{\prime})u_{-,\te_0}'(z,R)}{\cos(\te_R) - \sin(\te_R)u_{-,\te_0}'(z,R)}.
\lb{2.20a}
\end{split}
\end{align}
In particular, by \eqref{2.5a} and \eqref{2.5b},
\begin{equation}
\Big(\Lambda_{\te_0,\te_R}^{\te_0,\te_R'} (z)\Big)_{1,2} = 0, \quad 
\Big(\Lambda_{\te_0,\te_R}^{\te_0',\te_R} (z)\Big)_{2,1} = 0.
\end{equation} 
\end{remark}

\begin{remark} \lb{r2.6}
We note that $\Lazzqq(z)$ represents the \emph{Dirichlet-to-Neumann map},
$\Lambda_{D,N}(z)$, for the boundary value problem \eqref{2.3}, \eqref{2.4}; that is,  when
$\te_0=\te_R=0$, $\te_0^{\prime}=\te_R^{\prime}=\pi/2$, then \eqref{2.7} becomes
\begin{equation}
\Lambda_{D,N}(z) \begin{bmatrix}u(z,0)\\u(z,R)\end{bmatrix}=
\Lazzqq( z) \begin{bmatrix}u(z,0)\\u(z,R)\end{bmatrix}=
\lbrac\begin{array}{r}u'(z,0)\\-u'(z,R)\end{array}\rbrac, \quad z\in\bbC\backslash\sigma(\Hte),
\end{equation}
with $u(z,\cdot)=u(z,\cdot \, ;(0,c_0),(0,c_R))$, $u(z,0)=c_0$, $u(z,R)=c_R$. The Dirichlet-to-Neumann map in the case $V=0$ has recently been considered in
\cite[Example 5.1]{Po08}. The Neumann-to-Dirichlet map $\Lambda_{N,D}(z) 
= \Lambda_{\pi/2,\pi/2}^{\pi,\pi} (z) = - [\Lambda_{D,N}(z)]^{-1}$ (cf.\ \eqref{4.60})
in the case $V=0$ 
has earlier been computed in \cite[Example\ 4.1]{DM92}. We also refer to 
\cite{BMN08}, \cite{BGP08}, \cite{DM91}, \cite{FOP06} in the intimately related 
context of $Q$ and $M$-functions.
\end{remark}

It would be interesting to establish precise connections between
$\La_{\te_0,\te_R}^{\te_0,\te_R} (z)$ and the dynamical response operator discussed, for instance, in \cite{ABI92}, \cite{AK08}, \cite{ALP02}, \cite{ALP05} in connection with the problem of regularity and controllability of the wave equation on a compact interval.

We conclude this section with an elementary result needed in the proof of
Lemma \ref{l3.4} and Theorem \ref{t4.6}:

\begin{lemma} \lb{l2.7}
Let $V\in L^1((0,R); dx)$, fix $\te_0, \te_R\in S_{2 \pi}$, $c_0, c_R\in\bbC$, and assume that $z\in\CR$. Then the unique solution $u(z,\cdot)=u(z,\cdot\,;(\te_0,c_0),(\te_R,c_R))$ of \eqref{2.3}, \eqref{2.4} can be expressed in terms of the fundamental system
$\theta (z,\cdot), \phi (z,\cdot)$, as introduced in the proof of Lemma \ref{l2.1}, and
satisfying the initial conditions \eqref{2.2b}, as follows:
\begin{align}
& \, u(z,\cdot)=u(z,\cdot\,;(\te_0,c_0),(\te_R,c_R)) = \alpha (z) \theta (z,\cdot)
+ \beta (z) \phi (z,\cdot),    \lb{2.37} \\
& \begin{bmatrix} \alpha (z) \\[1mm] \beta (z) \end{bmatrix}
= \f{1}{\Delta (z,R,\te_0,\te_R)} \begin{bmatrix}
[\cos(\te_R) \phi (z,R) - \sin(\te_R) \phi' (z,R)] c_0 - \sin(\te_0) c_R \\[1mm]
- [\cos(\te_R) \theta (z,R) - \sin(\te_R) \theta' (z,R)] c_0 + \cos(\te_0) c_R \end{bmatrix},  \no
\end{align}
with $\Delta$ given by \eqref{2.2d}. In particular, one has
\begin{align}
\begin{split}
& \, u_{+,\te_R}(z,\cdot)=u(z,\cdot\,;((0,1),(\te_R,0)) = \alpha (z) \theta (z,\cdot)
+ \beta (z) \phi (z,\cdot),    \lb{2.38} \\
& \begin{bmatrix} \alpha (z) \\[1mm] \beta (z) \end{bmatrix}
= \f{1}{\Delta (z,R,0,\te_R)} \begin{bmatrix}
\cos(\te_R) \phi (z,R) - \sin(\te_R) \phi' (z,R)  \\[1mm]
- \cos(\te_R) \theta (z,R) + \sin(\te_R) \theta' (z,R) \end{bmatrix},
\end{split}
\intertext{and}
& \, u_{-,\te_0}(z,\cdot)=u(z,\cdot\,;(\te_0,0),(0,1))      \lb{2.39} \\
& \, \qquad \qquad \, = \f{1}{\Delta (z,R,\te_0,0)} \big(-\sin(\te_0) \theta (z,\cdot)
+ \cos(\te_0) \phi (z,\cdot)\big).   \no
\end{align}
\end{lemma}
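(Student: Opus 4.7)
The plan is to reduce everything to a $2 \times 2$ linear system and invert it via Cramer's rule. First I observe that the fundamental solutions $\theta(z,\cdot)$ and $\phi(z,\cdot)$ introduced in the proof of Lemma \ref{l2.1} satisfy $W(\theta(z,\cdot),\phi(z,\cdot))=1$ (this Wronskian is computed at $x=0$ using the initial conditions \eqref{2.2b} and is constant in $x$), so they form a basis for the solutions of \eqref{2.3}. Consequently, the unique solution $u(z,\cdot)$ guaranteed by Lemma \ref{l2.2} admits a unique representation $u(z,\cdot)=\alpha(z)\theta(z,\cdot)+\beta(z)\phi(z,\cdot)$.

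Next I would impose the boundary condition \eqref{2.4}. Evaluating $u$ and $u'$ at $x=0$ with the help of $\theta(z,0)=\phi'(z,0)=1$, $\theta'(z,0)=\phi(z,0)=0$ reduces the first component of $\gate(u)=[c_0,c_R]^\top$ to the single equation $\cos(\te_0)\alpha(z)+\sin(\te_0)\beta(z)=c_0$, while the second component at $x=R$ becomes
\begin{equation*}
[\cos(\te_R)\theta(z,R)-\sin(\te_R)\theta'(z,R)]\alpha(z)
+[\cos(\te_R)\phi(z,R)-\sin(\te_R)\phi'(z,R)]\beta(z)=c_R.
\end{equation*}
The coefficient matrix of this pair of equations is exactly the matrix $\cU(z,R,\te_0,\te_R)$ appearing in \eqref{2.2ca}, whose determinant is $\Delta(z,R,\te_0,\te_R)$ by \eqref{2.2det}--\eqref{2.2d}. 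Since $z\in\CR$, the proof of Lemma \ref{l2.1} guarantees $\Delta(z,R,\te_0,\te_R)\ne 0$, so Cramer's rule applied to this system yields the explicit formula for $[\alpha(z),\beta(z)]^\top$ displayed in \eqref{2.37}.

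Finally, the two specializations \eqref{2.38} and \eqref{2.39} follow by substitution into \eqref{2.37}. For $u_{+,\te_R}$, I take $(\te_0,c_0,\te_R,c_R)=(0,1,\te_R,0)$, at which point the $c_R$ column of the right-hand side of \eqref{2.37} drops out entirely (the factor $\sin(0)=0$) and the $c_0$ column simplifies to the vector in \eqref{2.38}. For $u_{-,\te_0}$, I take $(\te_0,c_0,\te_R,c_R)=(\te_0,0,0,1)$, which eliminates the $c_0$ column; using $\cos(0)=1$ and $\sin(0)=0$ in the remaining $c_R$ column collapses the formula to \eqref{2.39}.

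No genuine obstacle appears; the entire argument is a linear-algebraic bookkeeping exercise, and the only nontrivial input is the nonvanishing of $\Delta(z,R,\te_0,\te_R)$ off the spectrum, which has already been established in Lemma \ref{l2.1}.
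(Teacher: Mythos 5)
Your proposal is correct and follows exactly the paper's own route: write $u=\alpha\theta+\beta\phi$, observe that imposing $\gate(u)=[c_0\;c_R]^\top$ produces the system $\cU(z,R,\te_0,\te_R)[\alpha\;\beta]^\top=[c_0\;c_R]^\top$ with $\det(\cU)=\Delta\neq 0$ off $\sigma(\Hte)$, and invert. (Only a cosmetic remark: in the specialization to $u_{+,\te_R}$ the $c_R$-terms vanish because $c_R=0$, not because of the factor $\sin(0)$, which only kills the first component; the conclusion is unaffected.)
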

\begin{proof} With $u(z,\cdot)=u(z,\cdot\,;(\te_0,c_0),(\te_R,c_R))
= \alpha \theta (z,\cdot) + \beta \phi (z,\cdot)$ one observes from \eqref{2.4} that
\begin{align}
& \begin{bmatrix} c_0 \\ c_R \end{bmatrix} =
\gamma_{\te_0,\te_R} (u(z,\cdot)) = \gamma_{\te_0,\te_R} (\alpha \theta (z,\cdot)
+ \beta \phi (z,\cdot))   \no \\
& \quad = \begin{bmatrix}  \gamma_{\te_0,\te_R} (\theta (z,\cdot)) \quad
\gamma_{\te_0,\te_R} (\phi (z,\cdot)) \end{bmatrix} \begin{bmatrix} \alpha \\ \beta \end{bmatrix}
\no \\
& \quad = \begin{bmatrix} \cos(\te_0) & \sin(\te_0) \\[1mm]
\cos(\te_R) \theta (z,R) - \sin(\te_R) \theta' (z,R) & \cos(\te_R) \phi (z,R)
- \sin(\te_R) \phi' (z,R)
\end{bmatrix} \begin{bmatrix} \alpha \\[1mm] \beta \end{bmatrix}   \no \\
& \quad = \cU(z,R,\te_0,\te_R) \begin{bmatrix} \alpha \\ \beta \end{bmatrix},   \lb{2.40}
\end{align}
with $\cU$ introduced in \eqref{2.2ca}. Solving \eqref{2.40} for $\alpha, \beta$ (and observing that
$\det(\cU) = \Delta$ according to \eqref{2.2det}) then yields \eqref{2.37} and hence the special cases \eqref{2.38} and \eqref{2.39}.
\end{proof}

\section{Resolvent Formulas}  \label{s3}

In the principal result of this section, we will derive a formula for $\Lates$ in terms of the resolvent of $\Hte$ and the boundary traces $\gamma_{\theta'_0,\theta'_R}$. But first we focus on the special boundary data map given by
\begin{equation}
\Late(z) = \Lateq (z), \quad \te_0,\te_R\in S_{2 \pi}, \quad z\in\bbC\backslash
\sigma(\Hte),       \lb{3.0}
\end{equation}
a generalization of the Dirichlet-to-Neumann map $\Lambda_{D,N}(z)=\Lazz(z)$. Using the basis for solutions of \eqref{2.3} given in \eqref{2.5} by
$u_{+,\te_R}(z,\cdot), u_{-,\te_0}(z,\cdot)$, and with
$\te_0^{\prime}=(\te_0+\pi/2) \, \text{\rm mod} (2 \pi)$,
$\te_R^{\prime}=(\te_R+\pi/2) \, \text{\rm mod} (2 \pi)$, equation \eqref{2.20} becomes
\begin{align}
& \Late (z) = \big[\big(\Late (z)\big)_{j,k}\big]_{1 \leq j,k \leq 2}, \quad
z\in\bbC\backslash\sigma(\Hte),  \label{3.1}   \\[1mm]
\begin{split}
& \big(\Late (z)\big)_{1,1} = \frac{-\sin(\te_0)
+ \cos(\te_0)u_{+,\te_R}'(z,0)}{\cos(\te_0) + \sin(\te_0)u_{+,\te_R}'(z,0)},    \\
& \big(\Late (z)\big)_{1,2} = \frac{-\sin(\te_0)u_{-,\te_0}(z,0)
+ \cos(\te_0)u_{-,\te_0}'(z,0)}{\cos(\te_R) - \sin(\te_R)u_{-,\te_0}'(z,R)},    \\
& \big(\Late (z)\big)_{2,1} = \frac{-\sin(\te_R)u_{+,\te_R}(z,R)
- \cos(\te_R)u_{+,\te_R}'(z,R)}{\cos(\te_0) + \sin(\te_0)u_{+,\te_R}'(z,0)},  \lb{3.1a} \\
& \big(\Late (z)\big)_{2,2} = \frac{-\sin(\te_R)
- \cos(\te_R)u_{-,\te_0}'(z,R)}{\cos(\te_R) - \sin(\te_R)u_{-,\te_0}'(z,R)}.
\end{split}
\end{align}

Next, consider 
\begin{equation}\label{3.2}
\al(\xi)=\begin{bmatrix}\cos(\xi) & \sin(\xi)\end{bmatrix},\quad \xi\in S_{2 \pi},
\end{equation}
and let $\Psi(z,\cdot \, ;x_0,\alpha(\xi))$ denote the fundamental matrix of solutions of
\eqref{2.3} given by
\begin{align}\label{3.3}
\Psi(z,\cdot \, ;x_0,\alpha(\xi))&=\begin{bmatrix}\Te(z,\cdot \, ;x_0,\alpha(\xi)) & \Phi(z,\cdot \, ;x_0,\alpha(\xi))
\end{bmatrix}  \no \\
&=\begin{bmatrix}\vartheta(z,\cdot \, ;x_0,\alpha(\xi))&\varphi(z,\cdot \, ;x_0,\alpha(\xi))\\ \vartheta'(z,\cdot \, ;x_0\alpha(\xi))&\varphi'(z,\cdot \, ;x_0,\alpha(\xi))\end{bmatrix}, \\
\intertext{in particular,} 
\Psi(z,x_0;x_0,\alpha(\xi))&=\lbrac\begin{array}{rr}\cos(\xi)&-\sin(\xi)\\ \sin(\xi)&\cos(\xi) \end{array}\rbrac,\quad x_0\in[0,R].   \no
\end{align}
In addition, set 
\begin{equation}
\beta(\eta)=\begin{bmatrix}\cos(\eta) & -\sin(\eta)\end{bmatrix}
= \al(-\eta), \quad \eta\in S_{2 \pi}.
\lb{3.3a}
\end{equation}
As shown in \cite[Section 2]{CG02} and \cite{CG06}, one can then introduce for all $x_0,y_0\in[0,R]$,
$x_0\ne y_0$, the \emph{Weyl--Titchmarsh function}, $m(z;x_0,\al(\xi);y_0,\beta(\eta))$, given by 
\begin{align}\label{3.4}
& m(z;x_0,\al(\xi);y_0,\beta(\eta))   \no \\
& \quad = -[\beta(\eta)\Phi(z,y_0;x_0,\al(\xi))]^{-1}[\beta(\eta)\Te(z,y_0;x_0,\al(\xi))]
\no \\[1mm]
& \quad =-\dfrac{\cos(\eta)\vartheta(z,y_0;x_0,\al(\xi)) - \sin(\eta)\vartheta'(z,y_0;x_0,\al(\xi))}{\cos(\eta)\varphi(z,y_0;x_0,\al(\xi)) - \sin(\eta)\varphi'(z,y_0;x_0,\al(\xi))},\\[1mm]
&\hspace*{4.6cm}  \xi,\eta\in S_{2 \pi}, \; z\in\bbC\backslash\sigma(H_{\xi,\eta}).  \no
\end{align}
Then, with $\psi(z,\cdot \, ;x_0,\al(\xi);y_0,\beta(\eta))$ defined by
\begin{align}\label{3.5}
\begin{split}
& \psi(z,\cdot \, ;x_0,\al(\xi);y_0,\beta(\eta))  \\
& \quad =
\vartheta(z,\cdot \, ;x_0,\al(\xi))+ \varphi(z,\cdot \, ;x_0,\al(\xi))m(z;x_0,\al(\xi);y_0,\beta(\eta)),
\end{split}
\end{align}
$\varphi(z,\cdot \, ;x_0,\al(\xi))$ and $\psi(z,\cdot \, ;x_0,\al(\xi);y_0,\beta(\eta))$ are linearly independent solutions of \eqref{2.3} satisfying
\begin{align}
\cos(\xi)\varphi(z,x_0;x_0,\al(\xi)) + \sin(\xi)\varphi'(z,x_0;x_0,\al(\xi))&=0,\label{3.6}\\
\cos(\eta)\psi(z,y_0;x_0,\al(\xi);y_0,\beta(\eta)) - \sin(\eta)\psi'(z,y_0;x_0,\al(\xi);y_0,\beta(\eta))&=0.   \label{3.7}
\end{align}

The following result has been proved in \cite[Lemma 2.10]{CG02} in a self-adjoint context, but self-adjointness is of no relevance for the result \eqref{3.8} below. For the convenience of the reader we reproduce its proof here.  

\begin{lemma} [\cite{CG02}, Lemma 2.10] \lb{l3.1}
Suppose $\xi, \eta \in S_{2 \pi}$,
$z\in\bbC\big\backslash\big(\sigma(H_{\xi,\eta}) \cup \sigma(H_{0,\eta})\big)$,
let $\al(\xi)$ and $\beta(\eta)$ be defined as  in \eqref{3.2} and \eqref{3.3a}, and
suppose that $x_0, y_0 \in [0,R]$.
Then, the following linear fractional transformation holds,
\begin{equation}\label{3.8}
m(z;x_0,\al(\xi);y_0,\beta(\eta))=\dfrac{-\sin(\xi) + \cos(\xi)m(z;x_0,\al(0);y_0,\beta(\eta))}
{\cos(\xi) + \sin(\xi)m(z;x_0,\al(0);y_0,\beta(\eta))},
\end{equation}
with $m$ defined as in \eqref{3.4}.
\end{lemma}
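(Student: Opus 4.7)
The plan is to reduce the claim to the defining formula \eqref{3.4} for $m$ by expressing the $\xi$-dependent fundamental system $\{\vartheta(z,\cdot\,;x_0,\al(\xi)),\,\varphi(z,\cdot\,;x_0,\al(\xi))\}$ as a linear combination of its $\xi=0$ counterpart. Both pairs span the two-dimensional solution space of \eqref{2.3}, so a unique transition matrix exists. Matching the initial data at $x=x_0$ prescribed by \eqref{3.3}, one obtains
\begin{align*}
\vartheta(z,\cdot\,;x_0,\al(\xi)) &= \cos(\xi)\,\vartheta(z,\cdot\,;x_0,\al(0)) + \sin(\xi)\,\varphi(z,\cdot\,;x_0,\al(0)),\\
\varphi(z,\cdot\,;x_0,\al(\xi)) &= -\sin(\xi)\,\vartheta(z,\cdot\,;x_0,\al(0)) + \cos(\xi)\,\varphi(z,\cdot\,;x_0,\al(0));
\end{align*}
equivalently, $\Psi(z,\cdot\,;x_0,\al(\xi)) = \Psi(z,\cdot\,;x_0,\al(0))\,\Psi(z,x_0;x_0,\al(\xi))$ at the level of $2\times 2$ fundamental matrices, the corresponding identity for the derivative row being automatic (both sides solve the same first-order system with the same datum at $x_0$).

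Next I would introduce the shorthand $N(\xi) := \beta(\eta)\Te(z,y_0;x_0,\al(\xi))$ and $D(\xi) := \beta(\eta)\Phi(z,y_0;x_0,\al(\xi))$, so that \eqref{3.4} reads $m(z;x_0,\al(\xi);y_0,\beta(\eta)) = -N(\xi)/D(\xi)$. Applying the row $\beta(\eta)$ to the preceding matrix identity evaluated at $x=y_0$ and regrouping according to $\cos(\xi)$ and $\sin(\xi)$ yields the bilinear relations
\begin{align*}
N(\xi) &= \cos(\xi)\,N(0) + \sin(\xi)\,D(0),\\
D(\xi) &= -\sin(\xi)\,N(0) + \cos(\xi)\,D(0).
\end{align*}
Here $D(0)\ne 0$ by the hypothesis $z\notin\sigma(H_{0,\eta})$, so the quantity $m_0 := m(z;x_0,\al(0);y_0,\beta(\eta)) = -N(0)/D(0)$ is well defined.

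Finally, dividing numerator and denominator of $-N(\xi)/D(\xi)$ by $D(0)$ and substituting $-N(0)/D(0) = m_0$, I would arrive directly at
$$
m(z;x_0,\al(\xi);y_0,\beta(\eta)) \;=\; \frac{-\sin(\xi) + \cos(\xi)\,m_0}{\cos(\xi) + \sin(\xi)\,m_0},
$$
which is \eqref{3.8}. Nonvanishing of the denominator follows from the identity $D(\xi) = D(0)\,[\cos(\xi)+\sin(\xi)\,m_0]$ (deduced from the two bilinear relations) together with the hypothesis $z\notin\sigma(H_{\xi,\eta})$, which forces $D(\xi)\ne 0$.

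There is no substantive obstacle here: the content is the matrix identity linking $\Psi(z,\cdot\,;x_0,\al(\xi))$ to $\Psi(z,\cdot\,;x_0,\al(0))$ via the rotation initial data in \eqref{3.3}, after which only bilinear algebra remains. The only mild care required is sign bookkeeping (in particular, the minus signs in $\beta(\eta)=\begin{bmatrix}\cos(\eta)&-\sin(\eta)\end{bmatrix}$ and in front of $N/D$ in \eqref{3.4}); this is kept transparent by operating throughout with the row vector $\beta(\eta)$ rather than with individual scalar components.
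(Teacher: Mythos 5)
Your proof is correct, but it runs in the opposite direction from the paper's. The paper works with the Weyl solution $\psi(z,\cdot\,;x_0,\al(\xi);y_0,\beta(\eta))$ of \eqref{3.5}: since $\psi_\xi$ and $\psi_0$ both satisfy the one-dimensional boundary condition \eqref{3.7} at $y_0$, their Cauchy data there are proportional, hence $\psi_\xi=c(z)\psi_0$ on all of $[0,R]$ by uniqueness for the initial value problem; evaluating at $x_0$ via \eqref{3.14}--\eqref{3.15} and inverting the rotation $\Psi(z,x_0;x_0,\al(\xi))$ then gives $\begin{bmatrix}1 & m_\xi\end{bmatrix}^\top=c(z)R(-\xi)\begin{bmatrix}1 & m_0\end{bmatrix}^\top$, whose two components yield \eqref{3.8}. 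You instead transport the fundamental matrix from $x_0$ to $y_0$: the identity $\Psi(z,\cdot\,;x_0,\al(\xi))=\Psi(z,\cdot\,;x_0,\al(0))\,\Psi(z,x_0;x_0,\al(\xi))$ (again by uniqueness, matching the data \eqref{3.3} at $x_0$) turns the numerator and denominator of \eqref{3.4} into the bilinear combinations $N(\xi)=\cos(\xi)N(0)+\sin(\xi)D(0)$, $D(\xi)=-\sin(\xi)N(0)+\cos(\xi)D(0)$, and the fractional linear law follows by division. The same rotation matrix drives both arguments, so the routes are equivalent in substance; what yours buys is that the denominator identity $D(\xi)=D(0)\,[\cos(\xi)+\sin(\xi)m_0]$ is exhibited explicitly, making the nonvanishing of $\cos(\xi)+\sin(\xi)m_0$ transparent, whereas in the paper this is only implicit in the relation $1=c(z)[\cos(\xi)+\sin(\xi)m_0]$. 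The paper's version is marginally shorter because the constant $c(z)$ absorbs all normalizations at once. (One shared imprecision, inherited from the statement itself: for general $x_0,y_0$ the conditions $D(0)\neq 0$ and $D(\xi)\neq 0$ correspond to $z$ avoiding the spectra of the Schr\"odinger operators on the interval between $x_0$ and $y_0$ with the indicated boundary conditions, which coincide with $\sigma(H_{0,\eta})$ and $\sigma(H_{\xi,\eta})$ only for $x_0=0$, $y_0=R$ --- the case actually used in Theorem \ref{t3.2}.)
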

\begin{proof}
With $\psi(z,\cdot \, ;x_0,\al(\xi);y_0,\beta(\eta))$ defined in \eqref{3.5}, by \eqref{3.7} one obtains
\begin{equation}
\beta(\eta)\begin{bmatrix}\psi(z,y_0;x_0,\al(0);y_0,\beta(\eta)) \\ \psi'(z,y_0;x_0,\al(0);y_0,\beta(\eta))  \end{bmatrix}=
\beta(\eta)\begin{bmatrix}\psi(z,y_0;x_0,\al(\xi);y_0,\beta(\eta)) \\ \psi'(z,y_0;x_0,\al(\xi);y_0,\beta(\eta))  \end{bmatrix}=0,
\end{equation}
and as a consequence, for some $c(z)\in \bbC\backslash\{0\}$,
\begin{equation}
\begin{bmatrix}\psi(z,y_0;x_0,\al(\xi);y_0,\beta(\eta)) \\ \psi'(z,y_0;x_0,\al(\xi);y_0,\beta(\eta))  \end{bmatrix}=
c(z)\begin{bmatrix}\psi(z,y_0;x_0,\al(0);y_0,\beta(\eta)) \\ \psi'(z,y_0;x_0,\al(0);y_0,\beta(\eta))  \end{bmatrix}.
\end{equation}
By the uniqueness of solutions for the initial value problem associated with \eqref{2.3},
\begin{equation}\label{3.13}
\begin{bmatrix}\psi(z,x;x_0,\al(\xi);y_0,\beta(\eta)) \\ \psi'(z,x;x_0,\al(\xi);y_0,\beta(\eta))  \end{bmatrix}=
c(z)\begin{bmatrix}\psi(z,x;x_0,\al(0);y_0,\beta(\eta)) \\ \psi'(z,x;x_0,\al(0);y_0,\beta(\eta))  \end{bmatrix}, \quad
x\in{[0,R]}.
\end{equation}
Next, one notes that
\begin{align}\label{3.14}
&\begin{bmatrix}\psi(z,x_0;x_0,\al(\xi);y_0,\beta(\eta)) \\ \psi'(z,x_0;x_0,\al(\xi);y_0,\beta(\eta))  \end{bmatrix}=
\Psi(z,x_0;x_0,\al(\xi))\begin{bmatrix}1\\ m(z;x_0,\al(\xi);y_0,\beta(\eta)) \end{bmatrix}  \no \\
& \quad =\begin{bmatrix}\cos(\xi)& -\sin(\xi)\\ \sin(\xi)& \cos(\xi) \end{bmatrix}
\begin{bmatrix}1\\ m(z;x_0,\al(\xi);y_0,\beta(\eta)) \end{bmatrix},
\end{align}
and similarly that
\begin{align}\label{3.15}
\begin{bmatrix}\psi(z,x_0;x_0,\al(0);y_0,\beta(\eta)) \\ \psi'(z,x_0;x_0,\al(0);y_0,\beta(\eta))  \end{bmatrix}&=
\Psi(z,x_0;x_0,\al(0))\begin{bmatrix}1\\ m(z;x_0,\al(0);y_0,\beta(\eta)) \end{bmatrix}  \no \\
&=\begin{bmatrix}1\\ m(z;x_0,\al(0);y_0,\beta(\eta)) \end{bmatrix}.
\end{align}
By \eqref{3.13}--\eqref{3.15}, one concludes that
\begin{equation}
\begin{bmatrix}1\\ m(z;x_0,\al(\xi);y_0,\beta(\eta)) \end{bmatrix}=
c(z)\begin{bmatrix}\cos(\xi)& \sin(\xi)\\ -\sin(\xi)& \cos(\xi) \end{bmatrix}
\begin{bmatrix}1\\ m(z;x_0,\al(0);y_0,\beta(\eta)) \end{bmatrix}
\end{equation}
implying \eqref{3.8}.
\end{proof}

With $\al(\xi)$ and $\beta(\eta)$ defined in \eqref{3.2} and \eqref{3.3a},
and with $m(z;x_0,\al(\xi);y_0,\beta(\eta))$ defined in \eqref{3.4}, for the next result we let
\begin{align}
&\al_0=\al(\te_0)=\begin{bmatrix}\cos(\te_0) & \sin(\te_0)
\end{bmatrix},\quad
 \beta_R=\beta(\te_R)=\begin{bmatrix}\cos(\te_R) & -\sin(\te_R)\end{bmatrix},\label{3.9}\\
&m_{+,\te_0}(z,\te_R)=m(z;0,\al_0;R,\beta_R),\quad 
m_{-,\te_R}(z,\te_0)=m(z;R,\beta_R; 0,\al_0).\label{3.10}
\end{align}

\begin{theorem}\label{t3.2}
Let $\te_0, \te_R \in S_{2 \pi}$ and $z\in\bbC\backslash\sigma(\Hte)$. Then,
\begin{equation}
\Late (z) = \begin{bmatrix} m_{+,\te_0}(z,\te_R) & \Late (z)_{1,2}  \\[1mm]
\Late (z)_{2,1} & -m_{-,\te_R}(z,\te_0) \end{bmatrix},    \lb{3.11}
\end{equation}
where
\begin{align}
&\Late (z)_{1,2} = \Late (z)_{2,1} \no \\[1mm]
\begin{split}
& \quad =  \dfrac{-\sin(\te_0)u_{-,\te_0}(z,0) + \cos(\te_0)u_{-,\te_0}'(z,0)}{\cos(\te_R) 
- \sin(\te_R)u_{-,\te_0}'(z,R)} \\[1mm]
& \quad = \dfrac{-\sin(\te_R)u_{+,\te_R}(z,R) - \cos(\te_R)u_{+,\te_R}'(z,R)}{\cos(\te_0) 
+ \sin(\te_0)u_{+,\te_R}'(z,0)}.    \lb{3.11a}
\end{split}
\end{align}
\end{theorem}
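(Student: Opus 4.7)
The plan is to build on the explicit matrix representation of $\Late(z)$ already recorded in \eqref{3.1}--\eqref{3.1a}, which is just Remark \ref{r2.5} specialized to $\te_0' = (\te_0+\pi/2)\,\text{\rm mod}\,(2\pi)$ and $\te_R' = (\te_R+\pi/2)\,\text{\rm mod}\,(2\pi)$. The diagonal identifications $(\Late(z))_{1,1} = m_{+,\te_0}(z,\te_R)$ and $(\Late(z))_{2,2} = -m_{-,\te_R}(z,\te_0)$ will each reduce to a single application of the linear fractional formula in Lemma \ref{l3.1} at the appropriate endpoint, while the off-diagonal equality will drop out of the constancy of the Wronskian $W(u_{+,\te_R},u_{-,\te_0})$ together with the boundary conditions \eqref{2.5a}, \eqref{2.5b}.

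For the $(1,1)$ entry, I first identify $u_{+,\te_R}'(z,0) = m(z;0,\al(0);R,\beta(\te_R))$. Indeed, by \eqref{3.3} one has $\Psi(z,0;0,\al(0)) = I$, so evaluating \eqref{3.5} at $x = 0$ with $\xi = 0$ yields $\psi(z,0;0,\al(0);R,\beta(\te_R)) = 1$ and $\psi'(z,0;0,\al(0);R,\beta(\te_R)) = m(z;0,\al(0);R,\beta(\te_R))$; by \eqref{3.7} this solution also satisfies the $\te_R$-condition at $R$. Comparing with \eqref{2.5b}, uniqueness of solutions (Lemma \ref{l2.2}) yields $u_{+,\te_R}(z,\cdot) = \psi(z,\cdot;0,\al(0);R,\beta(\te_R))$. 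Lemma \ref{l3.1} applied with $(\xi,\eta,x_0,y_0) = (\te_0,\te_R,0,R)$ then converts $m_{+,\te_0}(z,\te_R)$ into precisely the $(1,1)$-entry of \eqref{3.1}. For the $(2,2)$ entry I repeat the argument at $x_0 = R$: writing $\beta_R = \al(-\te_R)$ and $\al_0 = \beta(-\te_0)$ places $m_{-,\te_R}(z,\te_0) = m(z;R,\al(-\te_R);0,\beta(-\te_0))$ in the form required by Lemma \ref{l3.1}; the same uniqueness argument (now measured against \eqref{2.5a}) gives $u_{-,\te_0}'(z,R) = m(z;R,\al(0);0,\beta(-\te_0))$; then Lemma \ref{l3.1} with $(\xi,\eta,x_0,y_0)=(-\te_R,-\te_0,R,0)$, followed by a global sign change, produces the $(2,2)$-entry of \eqref{3.1a}.

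For the off-diagonal equality, I would exploit the constancy of $W := W(u_{+,\te_R},u_{-,\te_0})$. Evaluating at $x = 0$ via \eqref{2.5c} with $u_{+,\te_R}(z,0)=1$ gives $W = u_{-,\te_0}'(z,0)-u_{+,\te_R}'(z,0)u_{-,\te_0}(z,0)$. Multiplying this relation by $\cos(\te_0)$ and by $\sin(\te_0)$ in turn, and using the boundary condition $\cos(\te_0)u_{-,\te_0}(z,0)+\sin(\te_0)u_{-,\te_0}'(z,0)=0$ to eliminate one of $u_{-,\te_0}(z,0)$, $u_{-,\te_0}'(z,0)$, produces
\begin{align*}
[\cos(\te_0)+\sin(\te_0)u_{+,\te_R}'(z,0)]\,u_{-,\te_0}'(z,0) &= \cos(\te_0)\,W, \\
[\cos(\te_0)+\sin(\te_0)u_{+,\te_R}'(z,0)]\,u_{-,\te_0}(z,0) &= -\sin(\te_0)\,W.
\end{align*}
Forming $-\sin(\te_0)$ times the second plus $\cos(\te_0)$ times the first yields the identity
$[\cos(\te_0)+\sin(\te_0)u_{+,\te_R}'(z,0)][-\sin(\te_0)u_{-,\te_0}(z,0)+\cos(\te_0)u_{-,\te_0}'(z,0)] = W$. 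An entirely symmetric computation at $x = R$, based on \eqref{2.5d}, $u_{-,\te_0}(z,R)=1$, and the $\te_R$-condition for $u_{+,\te_R}$, produces the twin identity
$[\cos(\te_R)-\sin(\te_R)u_{-,\te_0}'(z,R)][-\sin(\te_R)u_{+,\te_R}(z,R)-\cos(\te_R)u_{+,\te_R}'(z,R)] = W$. Both off-diagonal entries in \eqref{3.1a} therefore equal the common value $W\big/\{[\cos(\te_0)+\sin(\te_0)u_{+,\te_R}'(z,0)][\cos(\te_R)-\sin(\te_R)u_{-,\te_0}'(z,R)]\}$, establishing \eqref{3.11a}.

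The main obstacle is more of a bookkeeping annoyance than a serious difficulty: the basis \eqref{2.5} and the Weyl functions $m(z;0,\al(0);R,\beta(\te_R))$, $m(z;R,\al(0);0,\beta(-\te_0))$ are defined only off the auxiliary spectra $\sigma(H_{0,\te_R})$ and $\sigma(H_{\te_0,0})$ (precisely the hypotheses needed for Lemma \ref{l3.1}), whereas the theorem asserts \eqref{3.11} for all $z \in \bbC\setminus\sigma(\Hte)$. Since both sides of \eqref{3.11} are meromorphic on $\bbC\setminus\sigma(\Hte)$ and the argument above establishes the identity on the dense open subset where all ingredients are simultaneously defined, the full claim follows by analytic continuation.
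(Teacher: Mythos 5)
Your proposal is correct and follows essentially the same route as the paper: the diagonal entries are obtained by identifying $m_{+,0}(z,\te_R)=u_{+,\te_R}'(z,0)$ and $m_{-,0}(z,\te_0)=u_{-,\te_0}'(z,R)$ and then applying the linear fractional transformation of Lemma \ref{l3.1}, the off-diagonal equality comes from the same two Wronskian factorizations (the paper's \eqref{3.18}, which you simply derive in more explicit detail from \eqref{2.5a}--\eqref{2.5d}), and the auxiliary-spectrum restriction is removed by meromorphic continuation exactly as in the text. No gaps.
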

\begin{proof}
We temporarily assume in addition that
$z\notin \big(\sigma(H_{\theta_0,0}) \, \cup \, \sigma(H_{0,\theta_R})\big)$.
If $x_0=0$, $y_0=R$, then with $\varphi(z,\cdot \, ;0,\al_0)$ defined in \eqref{3.3}, and with $\psi(z,\cdot \, ;0,\al_0; R,\beta_R)$ defined in \eqref{3.5}, one notes, by \eqref{3.6} and  \eqref{3.7}, that
\begin{align}
\varphi(z,\cdot \, ;0,\al_0) &= C_-(z) u_{-,\te_0}(z,\cdot),  \\
\psi(z,\cdot \, ;0,\al_0; R,\beta_R) &= C_+(z) u_{+,\te_R}(z,\cdot),
\end{align}
for some $C_{\pm}(z)\in\bbC\backslash\{0\}$, where
$u_{+,\te_R}(z,\cdot), u_{-,\te_0}(z,\cdot)$ represents the basis for the solutions of
\eqref{2.3} described in \eqref{2.5}. With  $m_{+,\te_0}(z,\te_R)$ defined in \eqref{3.10}, one notes when $\te_0=0$ that
\begin{equation}
m_{+,0}(z,\te_R)
=\dfrac{\psi'(z,0;0,\al_0; R,\beta_R)}{\psi(z,0;0,\al_0; R,\beta_R)}\bigg|_{\te_0=0} 
= u_{+,\te_R}'(z,0);  \lb{3.11b}
\end{equation}
hence by \eqref{3.8} that
\begin{align}
\begin{split}
m_{+,\te_0}(z,\te_R) &= \dfrac{-\sin(\te_0) + \cos(\te_0)m_{+,0}(z,\te_R)}
{\cos(\te_0) + \sin(\te_0)m_{+,0}(z,\te_R)}  \\[1mm]
&= \dfrac{-\sin(\te_0) + \cos(\te_0)u_{+,\te_R}'(z,0)}{\cos(\te_0) 
+ \sin(\te_0)u_{+,\te_R}'(z,0)}.   \lb{3.24}
\end{split}
\end{align}
By Theorem \ref{t2.3}, $\Late$ is invariant with respect to a change of basis for
\eqref{2.3}; thus the $(1,1)$-entry of $\Late$, provided in  \eqref{3.1a}, equals 
$m_{+,\te_0}(z,\te_R)$.

Next, if $x_0=R$, $y_0=0$,  then with $\varphi(z,\cdot \, ;R,\beta_R)$ defined in 
\eqref{3.3}, and with $\psi(z,\cdot \, ;R,\beta_R;0,\al_0)$ defined in \eqref{3.5}, one notes, by \eqref{3.6} and \eqref{3.7}, that
\begin{align}
\varphi(z,\cdot \, ;R,\beta_R) &= D_+(z) u_{+,\te_R}(z,\cdot), \\
\psi(z,\cdot \, ;R,\beta_R;0,\al_0) &= D_-(z) u_{-,\te_0}(z,\cdot),
\end{align}
for some $D_{\pm}(z)\in\bbC\backslash\{0\}$,
where  $u_{+,\te_R}(z,\cdot), u_{-,\te_0}(z,\cdot)$ again denotes the basis for the solutions of \eqref{2.3} described in \eqref{2.5}. With $m_{-,\te_R}(z,\te_0)$ defined in \eqref{3.10}, one now obtains when $\te_R=0$ that,
\begin{equation}
m_{-,0}(z,\te_0)
=\dfrac{\psi'(z,R;R,\beta_R;0,\al_0)}{\psi(z,R;R,\beta_R;0,\al_0)}\bigg|_{\te_R=0} 
= u_{-,\te_0}'(z,R); \lb{3.25}
\end{equation}
hence by \eqref{3.8} that
\begin{align}
\begin{split}
m_{-,\te_R}(z,\te_0) &= \dfrac{\sin(\te_R) + \cos(\te_R)m_{-,0}(z,\te_0)}
{\cos(\te_R) - \sin(\te_R)m_{-,0}(z,\te_0)}   \\[1mm]
&= \dfrac{\sin(\te_R) + \cos(\te_R)u_{-,\te_0}'(z,R)}{\cos(\te_R)
- \sin(\te_R)u_{-,\te_0}'(z,R)}.   \lb{3.28}
\end{split}
\end{align}
Again, by the invariance of $\Late$ with respect to a change of basis for solutions of \eqref{2.3}, the $(2,2)$-entry of $\Late$, provided in \eqref{3.1a},  is  given by 
$-m_{-,\te_R}(z,\te_0)$.

To see that the off-diagonal elements of \eqref{3.11} are equal, we introduce
\begin{equation}\label{3.17}
W(z) = W(u_{+,\te_R}(z,\cdot), u_{-,\te_0}(z,\cdot)), \quad
\theta_0,\theta_R \in S_{2 \pi}, \; z\in\bbC\backslash\sigma(\Hte),
\end{equation}
where $u_{+,\te_R}(z,\cdot), u_{-,\te_0}(z,\cdot)$, is the basis for the solutions of
\eqref{2.3} as described in \eqref{2.5}. A straightforward computation, then yields
\begin{align}
& [-\sin(\te_0)u_{-,\te_0}(z,0) + \cos(\te_0)u_{-,\te_0}'(z,0)]
[\cos(\te_0) + \sin(\te_0)u_{+,\te_R}'(z,0)]   \no \\
& \quad = W(z)    \lb{3.18} \\
& \quad = [\cos(\te_R) - \sin(\te_R)u_{-,\te_0}'(z,R)]
[-\sin(\te_R)u_{+,\te_R}(z,R) - \cos(\te_R)u_{+,\te_R}'(z,R)].    \no
\end{align}
Indeed, the first equality in \eqref{3.18} follows by inserting the second term in 
\eqref{2.5a} into \eqref{2.5c}; similarly, the second equality in \eqref{3.18} follows by inserting the second term in \eqref{2.5b} into \eqref{2.5d}. Equation \eqref{3.18} 
immediately yields \eqref{3.11a}.

Finally, a meromorphic continuation with respect to $z$ then removes the additional assumption
$z\notin \big(\sigma(H_{\theta_0,0})\cup \sigma(H_{0,\theta_R})\big)$, completing the proof.
\end{proof}

\begin{remark} \lb{r3.3}
Assume the special self-adjoint case where $V$ is real-valued and 
$\te_0, \te_R \in [0, 2\pi)$.  
By Marchenko's fundamental uniqueness result \cite[Ch.\ 2]{Ma73}, one 
observes the 
inverse spectral theory fact that each of the two diagonal terms of 
$\Late (\cdot)$ already uniquely determines the potential coefficient $V(x)$ 
for a.e.\ $x\in [0,R]$. In addition, the known leading asymptotic behavior of 
$m_{+,\te_0}(z,\te_R)$ as $z \to i \, \infty$, 
\begin{align}
\begin{split}
m_{+,\te_0}(z,\te_R)& \underset{z \to i \infty}{\longrightarrow} \cot(\te_0) + \oh(1),  
\quad \te_0 \in [0, 2\pi)\backslash\{0,\pi\},  \\
m_{+,0}(z,\te_R) & \underset{z \to i \infty}{\longrightarrow} i z^{1/2} + \oh\big(z^{1/2}\big),  
\quad \te_0 \in \{0,\pi\},
\end{split}
\end{align}
determines the boundary condition parameter $\te_0$, and similarly, that of 
$m_{-,\te_R}(z,\te_0)$ determines 
$\te_R$. These facts are obviously not shared by the usual $2 \times 2$ 
matrix-valued Weyl--Titchmarsh function to be briefly discussed in our final 
Section \ref{s7} and hence demonstrates a stark contrast between these two matrix-valued functions, even though, both are Herglotz functions. The 
Herglotz property is well-known to be an intrinsic property of the $2 \times 2$ 
matrix-valued Weyl--Titchmarsh function (cf., e.g., \cite[Sect.\ 9.5]{CL85}), 
\cite[Sect.\ II.8]{LS75}, \cite[Sect.\ 6.5]{Pe88}, 
\cite[Ch.\ III]{Ti62}) and will also be verified for $\Late (\cdot)$ as a special 
case of the principal result in Theorem \ref{t4.6} (cf.\  \eqref{4.76}).
\end{remark}

Next we turn to the asymptotic behavior of $\Late (z)$ as $|z|\to\infty$,
$\Im(z^{1/2})>0$:

\begin{lemma} \lb{l3.4}
Suppose that $V\in L^1((0,R); dx)$ and let $\te_0, \te_R\in S_{2 \pi}$. Then,
\begin{align}
& \Late (z) \underset{\substack{|z|\to\infty\\ \Im(z^{1/2})>0}}{=}
\begin{bmatrix} \cot(\te_0) & \f{2i  z^{-1/2} e^{i z^{1/2} R}}{\sin(\te_0)\sin(\te_R)} \\
\f{2i z^{-1/2} e^{i z^{1/2} R}}{\sin(\te_0)\sin(\te_R)}
& - \cot(\te_0) \end{bmatrix}  \no \\[1mm]
& \quad + \begin{bmatrix} \Oh(|z|^{-1/2} ) 
& \Oh\big(|z|^{-1} e^{-\Im(z^{1/2}) R}\big) \\
\Oh\big(|z|^{-1} e^{-\Im(z^{1/2}) R}\big) & \Oh(|z|^{-1/2}) \end{bmatrix},
\quad \te_0 \neq 0, \, \te_R \neq 0, \\
& \Lambda_{0,\theta_R} (z) \underset{\substack{|z|\to\infty\\ \Im(z^{1/2})>0}}{=}
\begin{bmatrix} i z^{1/2} & - \f{2 e^{i z^{1/2} R}}{\sin(\te_R)} \\
- \f{2 e^{i z^{1/2} R}}{\sin(\te_R)}
& - \cot(\te_R) \end{bmatrix}  \no \\[1mm]
& \quad + \begin{bmatrix} \Oh(1) & \Oh\big(|z|^{-1/2} e^{-\Im(z^{1/2}) R}\big) \\
\Oh\big(|z|^{-1/2} e^{-\Im(z^{1/2}) R}\big) & \Oh(|z|^{-1/2}) \end{bmatrix},
\quad \te_0 = 0, \, \te_R \neq 0, \\
& \Lambda_{\theta_0,0} (z) \underset{\substack{|z|\to\infty\\ \Im(z^{1/2})>0}}{=}
\begin{bmatrix} \cot(\te_0) & - \f{2 e^{i z^{1/2} R}}{\sin(\te_0)} \\
- \f{2 e^{i z^{1/2} R}}{\sin(\te_0)}
& - i z^{1/2} \end{bmatrix}  \no \\[1mm]
& \quad + \begin{bmatrix} \Oh(|z|^{-1/2}) 
& \Oh\big(|z|^{-1/2} e^{-\Im(z^{1/2}) R}\big) \\
\Oh\big(|z|^{-1/2} e^{-\Im(z^{1/2}) R}\big) & \Oh(1) \end{bmatrix},
\quad \te_0 \neq 0, \, \te_R = 0, \\
& \Lambda_{0,0} (z) \underset{\substack{|z|\to\infty\\ \Im(z^{1/2})>0}}{=}
\begin{bmatrix} i z^{1/2} & - 2i z^{1/2} e^{i z^{1/2} R} \\
- 2i z^{1/2} e^{i z^{1/2} R} & - i z^{1/2} \end{bmatrix}  \no \\[1mm]
& \quad + \begin{bmatrix} \Oh(1) & \Oh\big(e^{-\Im(z^{1/2}) R}\big) \\
\Oh\big(e^{-\Im(z^{1/2}) R}\big) & \Oh(1) \end{bmatrix},
\quad \te_0 = 0, \, \te_R = 0.
\end{align}
\end{lemma}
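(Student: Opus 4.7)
The approach is to reduce everything to the high-energy asymptotics \eqref{2.2e} of the fundamental system $\theta(z,\cdot),\phi(z,\cdot)$. Using Theorem \ref{t3.2} together with the simplifications \eqref{3.1a} of Remark \ref{r2.5}, each entry of $\Late(z)$ is a rational expression in the values $u_{+,\theta_R}(z,0), u_{+,\theta_R}'(z,0), u_{+,\theta_R}(z,R), u_{+,\theta_R}'(z,R)$ and the corresponding $u_{-,\theta_0}$-quantities at $0$ and $R$. Invoking Lemma \ref{l2.7} then expresses these Weyl-type solutions as explicit linear combinations of $\theta(z,\cdot)$ and $\phi(z,\cdot)$; substitution of \eqref{2.2e} delivers the asymptotics.

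Concretely, one first notes that for $\Im(z^{1/2})>0$ large one has $\cos(z^{1/2}R)\sim \tfrac12 e^{-iz^{1/2}R}$ and $\sin(z^{1/2}R)\sim \tfrac{i}{2}e^{-iz^{1/2}R}$, so in particular $\tan(z^{1/2}R)\to i$. Feeding \eqref{2.2e} into the ratios supplied by \eqref{2.38}--\eqref{2.39} yields $u_{+,\theta_R}'(z,0)=iz^{1/2}+\Oh(1)$ (for $\theta_R\neq 0$; the $\theta_R=0$ case is handled by the analogous direct calculation via $u_{+,0}'(z,0)$), with a parallel statement for $u_{-,\theta_0}'(z,R)$. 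Plugging these into the diagonal formulas \eqref{3.24} and \eqref{3.28} produces at once the stated $\cot(\theta_j)$ asymptotics when $\theta_j\neq 0$ and the $iz^{1/2}$ leading behavior when $\theta_j=0$.

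For the off-diagonal entries (which coincide, by \eqref{3.11a}), the key observation is the Wronskian identity $\theta(z,x)\phi'(z,x)-\theta'(z,x)\phi(z,x) = 1$ (constant in $x$, as read off from \eqref{2.2b}), which combined with Lemma \ref{l2.7} yields the compact formula
\[
u_{+,\theta_R}(z,R) = \frac{-\sin(\theta_R)}{\cos(\theta_R)\phi(z,R)-\sin(\theta_R)\phi'(z,R)};
\]
the boundary condition \eqref{2.5b} then gives $u_{+,\theta_R}'(z,R)=\cot(\theta_R)u_{+,\theta_R}(z,R)$ when $\theta_R\neq 0$. The Pythagorean identity collapses the $(2,1)$-numerator to $-u_{+,\theta_R}(z,R)/\sin(\theta_R)\sim -(2/\sin(\theta_R))e^{iz^{1/2}R}$, and dividing by the denominator $\cos(\theta_0)+\sin(\theta_0)u_{+,\theta_R}'(z,0)\sim i\sin(\theta_0)z^{1/2}$ yields the stated $(2,1)$-entry $2i z^{-1/2}e^{iz^{1/2}R}/(\sin(\theta_0)\sin(\theta_R))$. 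The fully Dirichlet case $\theta_R=0$ is handled analogously via $u_{+,0}'(z,R)=-1/\phi(z,R)\sim 2iz^{1/2}e^{iz^{1/2}R}$ (again obtained from the Wronskian relation).

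The principal obstacle is the precise bookkeeping of error terms. Each of $\theta(z,R),\phi(z,R),\theta'(z,R),\phi'(z,R)$ has magnitude $e^{\Im(z^{1/2})R}$ (exponentially large), yet the off-diagonal entries carry the exponentially small factor $e^{iz^{1/2}R}=e^{-\Im(z^{1/2})R}e^{i\Re(z^{1/2})R}$; this is only revealed after the leading $e^{-iz^{1/2}R}$ pieces cancel through the Wronskian-based simplification. One must verify that the $\Oh$-remainders in \eqref{2.2e}, which share this exponentially large size, combine through the ratios to produce exactly the stated error factors $\Oh(|z|^{-1}e^{-\Im(z^{1/2})R})$ or $\Oh(|z|^{-1/2}e^{-\Im(z^{1/2})R})$. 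The four case splits arise naturally because whether $\theta_0$ (respectively, $\theta_R$) vanishes changes which term in the relevant denominator or numerator is dominant, forcing separate treatment.
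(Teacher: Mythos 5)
Your proposal is correct and coincides with the paper's own (one-line) proof: insert \eqref{2.38}, resp.\ \eqref{2.39}, into \eqref{3.24}, resp.\ \eqref{3.28}, and use the asymptotics \eqref{2.2e}; your handling of the off-diagonal entries via \eqref{3.11a} and the normalization $\theta(z,x)\phi'(z,x)-\theta'(z,x)\phi(z,x)=1$ simply makes explicit what the paper dismisses as ``an elementary computation''. One remark so that you are not alarmed when your numbers disagree with the printed statement: carrying your calculation through for the $(2,2)$ entries yields $-m_{-,\te_R}(z,\te_0)\to \cot(\te_R)$ for $\te_R\neq 0$ and $i z^{1/2}$ for $\te_R=0$ (as the explicit $V=0$ formulas of Example \ref{e3.8} confirm), rather than the $-\cot(\te_0)$, $-\cot(\te_R)$, $-iz^{1/2}$ appearing in the lemma, so the discrepancy lies in the printed statement and not in your method.
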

\begin{proof}
This follows from an elementary computation upon inserting \eqref{2.38} (resp.,
\eqref{2.39}) into \eqref{3.24} (resp., \eqref{3.28}) and using the asymptotic
expansions \eqref{2.2e}.
\end{proof}

For the principal result of this section, an explicit formula for $\Lates(z)$ in terms of the
resolvent $(\Hte- z I \big)^{-1}$ of $\Hte$ and the boundary traces
$\gamma_{\theta'_0,\theta'_R}$, we recall the Green's function associated with the operator $\Hte$ in \eqref{2.2a},
\begin{align}
& \Gte(z,x,x') = (\Hte -z I)^{-1}(x,x')   \no \\
& \quad = \frac{1}{W(u_{+,\te_R}(z,\cdot), u_{-,\te_0}(z,\cdot))} \begin{cases}
u_{-,\te_0}(z,x')u_{+,\te_R}(z,x), & 0\le x'\le x,\\[1mm]
u_{-,\te_0}(z,x)u_{+,\te_R}(z,x'), & 0\le x\le x',
\end{cases}   \label{3.32} \\[1mm]
&\hspace*{5.9cm}  z\in\bbC\backslash\sigma(\Hte), \; x, x' \in {[0,R]}.  \no
\end{align}
Here $u_{+,\te_R}(z,\cdot), u_{-,\te_0}(z,\cdot)$ is a basis for solutions of \eqref{2.3} as described in \eqref{2.5} and $I = I_{L^2((0,R); dx)}$ abbreviates the identity operator in $L^2((0,R); dx)$. Thus, one obtains
\begin{align}
\begin{split}
 \big((\Hte- z I)^{-1}g\big)(x) = \int_0^R \, dx'\, \Gte(z,x,x')g(x'),& \\
 g\in L^2((0,R); dx), \; z\in\bbC\backslash\sigma(\Hte), \; x\in(0,R).&  \lb{3.33}
\end{split}
\end{align}

Equation \eqref{3.32} is a crucial input in the proof of the following fundamental
result:

\begin{theorem} \lb{t3.5}
Assume that $\te_0, \te_R, \te_0', \te_R' \in S_{2 \pi}$, let $\Hte$ be defined as in
\eqref{2.2a}, suppose that $z\in\bbC\backslash\sigma(\Hte)$, and let
$S_{\theta_0' -\theta_0,\theta_R' -\theta_R}$ be defined according to \eqref{2.14}. 
Then 
\begin{equation}
 \Lates (z) S_{\theta_0' -\theta_0,\theta_R'-\theta_R}
=\gamma_{\te_0',\te_R'}
\big[\gamma_{\ol{\theta_0'},\ol{\theta_R'}} ((\Hte)^* - {\ol z} I)^{-1}\big]^*.
\lb{3.33aa}
\end{equation}
In particular, with
$\te_0^{\prime}=(\te_0+\pi/2) \, \text{\rm mod} (2 \pi)$,
$\te_R^{\prime}=(\te_R+\pi/2) \, \text{\rm mod} (2 \pi)$, one obtains
\begin{equation}
\Late (z) =\tgate \big[\hatt \gamma_{\ol{\theta_0},\ol{\theta_R}}((\Hte)^* - {\ol z} I)^{-1}\big]^*,
\lb{3.33a}
\end{equation}
where
\begin{equation}
\tgate=\gateq.
\end{equation}
As a consequence,
\begin{equation}
\Lambda_{(\theta_0 + \pi) \, \text{\rm mod} (2\pi),
(\theta_R + \pi) \, \text{\rm mod} (2\pi)} (z)
= \Late (z),  \quad  \te_0, \te_R\in S_{2 \pi}.     \lb{3.33A}
\end{equation}
\end{theorem}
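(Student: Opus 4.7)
The plan is to reduce the identity \eqref{3.33aa} to an $L^2$-duality computation via the Lagrange identity, and then derive \eqref{3.33a} and \eqref{3.33A} as immediate consequences. Fix $\vec{c}=(c_0,c_R)^\top\in\bbC^2$, write $\vec{a}:=S_{\te_0'-\te_0,\te_R'-\te_R}\vec{c}=(\sin(\te_0'-\te_0)c_0,\sin(\te_R'-\te_R)c_R)^\top$, and let $u(z,\cdot)=u(z,\cdot\,;(\te_0,a_0),(\te_R,a_R))$ be the unique solution of \eqref{2.3}, \eqref{2.4} provided by Lemma \ref{l2.2}. By the very definition \eqref{2.6},
\begin{equation*}
\Lates(z)S_{\te_0'-\te_0,\te_R'-\te_R}\vec{c}=\Lates(z)\gate(u(z,\cdot))=\gamma_{\te_0',\te_R'}(u(z,\cdot)),
\end{equation*}
so it suffices to prove that, as an element of $L^2((0,R);dx)$,
\begin{equation*}
u(z,\cdot)=\big[\gamma_{\ol{\theta_0'},\ol{\theta_R'}}((\Hte)^*-\ol z I)^{-1}\big]^*\vec{c}.
\end{equation*}

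To establish this identity, pick arbitrary $w\in L^2((0,R);dx)$ and set $v:=((\Hte)^*-\ol z I)^{-1}w$. Then $v,v'\in AC([0,R])$, $\gamma_{\ol{\theta_0},\ol{\theta_R}}(v)=0$, and $w=-v''+\ol V v-\ol z v$. Using $-u''+Vu=zu$ together with integration by parts twice yields the Lagrange identity
\begin{equation*}
\langle w,u\rangle_{L^2}=\int_0^R\big[\ol v\, u''-\ol{v}''\,u\big]\,dx=\big[\ol v\, u'-\ol{v}'\,u\big]_0^R.
\end{equation*}
The condition $\gamma_{\ol{\theta_0},\ol{\theta_R}}(v)=0$ forces $(v(0),v'(0))=t_0(\sin(\ol{\te_0}),-\cos(\ol{\te_0}))$ and $(v(R),v'(R))=t_R(\sin(\ol{\te_R}),\cos(\ol{\te_R}))$ for some $t_0,t_R\in\bbC$. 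Inserting these proportionalities into the boundary terms, and invoking the trigonometric identity $\cos(\te')\sin(\te)-\sin(\te')\cos(\te)=-\sin(\te'-\te)$ together with the boundary condition $\gate(u)=\vec{a}=S_{\te_0'-\te_0,\te_R'-\te_R}\vec{c}$, a short calculation identifies the boundary terms with $\langle\gamma_{\ol{\theta_0'},\ol{\theta_R'}}(v),\vec{c}\rangle_{\bbC^2}$. Since $w$ was arbitrary, this proves the displayed identity and hence \eqref{3.33aa} upon applying $\gamma_{\te_0',\te_R'}$.

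The specialization \eqref{3.33a} is immediate: with $\te_0'=(\te_0+\pi/2)\,\mathrm{mod}\,(2\pi)$ and $\te_R'=(\te_R+\pi/2)\,\mathrm{mod}\,(2\pi)$ one has $\sin(\te_0'-\te_0)=\sin(\te_R'-\te_R)=1$, so $S_{\te_0'-\te_0,\te_R'-\te_R}=I_2$, while $\Lates(z)=\Late(z)$ by \eqref{3.0} and $\gamma_{\te_0',\te_R'}=\tgate$ by definition. Finally, \eqref{3.33A} follows from \eqref{3.33a}: the shift $(\te_0,\te_R)\mapsto((\te_0+\pi)\,\mathrm{mod}\,(2\pi),(\te_R+\pi)\,\mathrm{mod}\,(2\pi))$ leaves $\Hte$ invariant by \eqref{2.2B}, whereas by \eqref{2.2A} it negates each of the two trace maps $\tgate$ and $\gamma_{\ol{\te_0}+\pi/2\,\mathrm{mod}\,2\pi,\,\ol{\te_R}+\pi/2\,\mathrm{mod}\,2\pi}$ appearing in \eqref{3.33a}; the two sign changes cancel.

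\textbf{Main obstacle.} The only nontrivial step is the bookkeeping in the Lagrange identity, namely verifying that the boundary terms $[\ol v u'-\ol{v}'u]_0^R$, once rewritten using the explicit parametrization of $(v(0),v'(0))$ and $(v(R),v'(R))$ allowed by $\gamma_{\ol{\theta_0},\ol{\theta_R}}(v)=0$, reproduce exactly the pairing $\langle\gamma_{\ol{\theta_0'},\ol{\theta_R'}}(v),\vec{c}\rangle_{\bbC^2}$ and not some other linear combination. The minus sign built into the $R$-component of the trace map \eqref{2.2} (the outward-normal convention) together with the minus sign in $-\sin(\te'-\te)$ must align consistently; this is the point where the factor $S_{\te_0'-\te_0,\te_R'-\te_R}$ enters and where any sign error would propagate.
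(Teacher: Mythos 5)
Your proof is correct, but it takes a genuinely different route from the paper's. The paper proves \eqref{3.33aa} by brute force: it writes out the Green's function of $(\Hte)^*$, computes $\hg(\ol z,0),\hg'(\ol z,0),\hg(\ol z,R),\hg'(\ol z,R)$ explicitly, passes to the adjoint to get the integral kernel \eqref{3.43}, evaluates all four entries of $\gamma_{\te_0',\te_R'}\big[\gamma_{\ol{\te_0'},\ol{\te_R'}}((\Hte)^*-\ol z I)^{-1}\big]^*$ in terms of $u_{\pm}$, and finally matches them against the representation \eqref{2.20a} of $\Lates(z)$ using the Wronskian evaluations \eqref{2.5c}, \eqref{2.5d}. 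You instead prove the operator identity weakly: you identify $\big[\gamma_{\ol{\te_0'},\ol{\te_R'}}((\Hte)^*-\ol z I)^{-1}\big]^*\vec c$ with the solution $u$ of the boundary value problem satisfying $\gate(u)=S_{\te_0'-\te_0,\te_R'-\te_R}\vec c$, by testing against arbitrary $w\in L^2((0,R);dx)$ and reducing to the Lagrange boundary term $[\ol v u'-\ol v'u]_0^R$ with $v=((\Hte)^*-\ol z I)^{-1}w$; the identity \eqref{3.33aa} is then immediate from the definition \eqref{2.6}. I checked your sign bookkeeping: with $(v(0),v'(0))=t_0(\sin(\ol{\te_0}),-\cos(\ol{\te_0}))$, $(v(R),v'(R))=t_R(\sin(\ol{\te_R}),\cos(\ol{\te_R}))$, both sides of the duality identity come out to $-\ol{t_0}\sin(\te_0'-\te_0)c_0-\ol{t_R}\sin(\te_R'-\te_R)c_R$, so the normal-derivative sign convention in \eqref{2.2} and the factor $S_{\te_0'-\te_0,\te_R'-\te_R}$ do align. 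What each approach buys: the paper's computation produces the explicit entry formulas \eqref{3.47}, which are of independent use and tie directly to \eqref{2.20a}; your argument is shorter, avoids the Green's function and Wronskian manipulations entirely, makes the origin of the factor $S_{\te_0'-\te_0,\te_R'-\te_R}$ transparent (it is the cross term $-\sin(\te'-\te)$ in the boundary pairing), and is the form of the argument that survives in the multidimensional setting (cf.\ the Robin-to-Robin results cited in Remark \ref{r3.6}). The specializations \eqref{3.33a} and \eqref{3.33A} are handled the same way in both proofs.
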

\begin{proof}
We start by noting that the Green's function $G^*_{\te_0,\te_R}(z,x,x')$ of $(\Hte)^*$ is given by
\begin{align}\label{3.32a}
\begin{split}
\Gte^*(z,x,x') &= \big((\Hte)^* -z I \big)^{-1}(x,x')   \no \\
& = \frac{1}{W^*(z)} \begin{cases}
\ol{u_{-,\te_0}(\ol z,x')} \, \ol{u_{+,\te_R}(\ol z,x)}, & 0\le x'\le x,\\[2mm]
\ol{u_{-,\te_0}(\ol z,x)}\, \ol{u_{+,\te_R}(\ol z,x')}, & 0\le x\le x',
\end{cases}  \\[1mm]
&\hspace*{2.25cm}  z\in\bbC\backslash\sigma\big((\Hte)^*\big), \; x, x' \in {[0,R]},
\end{split}
\end{align}
where $u_{+,\te_R}(z,\cdot), u_{-,\te_0}(z,\cdot)$ is a basis for solutions of \eqref{2.3} as described in \eqref{2.5}, and $W^*(z)$ is defined by
\begin{equation}\label{3.17a}
W^*(z) = W\big(\ol{u_{+,\te_R}(\ol z,\cdot)}, \ol{u_{-,\te_0}(\ol z,\cdot)}\big),
\quad z\in\bbC\backslash\sigma(\Hte).
\end{equation}
In particular, we note the fact that
\begin{equation}\label{3.38}
\ol{W^*(\ol z)} = W(z),
\end{equation}
with $W(z)$ the Wronskian defined in \eqref{3.17}. Thus, one obtains
\begin{align}
\begin{split}
 \big(((\Hte)^*- z I)^{-1}g\big)(x) = \int_0^R \, dx'\, \Gte^*(z,x,x') g(x'),& \\
 g\in L^2((0,R); dx), \; z\in\bbC\backslash\sigma(\Hte), \; x\in(0,R).&  \lb{3.38a}
\end{split}
\end{align}
Next, for $g\in L^2((0,R); dx)$ let
\begin{align}
\hg(\ol z,x) & = \big(((\Hte)^*- {\ol z} I)^{-1} g\big)(x)  \no  \\
& = \frac{1}{\ol{W(z)}} \bigg(\ol{u_{+,\te_R}(z,x)}
\int_0^x\, dx'\, \ol{u_{-,\te_0}(z,x')} g(x')     \\
& \hspace*{1.75cm} +  \ol{u_{-,\te_0}(z,x)} \int_x^R\, dx'\,
\ol{u_{+,\te_R}(z,x')} g(x')\bigg).   \no \\
\intertext{Then one notes that}
\begin{split}
\hg'(\ol z,x) & = \frac{1}{\ol{W(z)}} \bigg(\ol{u_{+,\te_R}'(z,x)} \int_0^x\, dx'\, \ol{u_{-,\te_0}(z,x')} g(x')  \\
& \hspace*{1.75cm} +  \ol{u_{-,\te_0}'(z,x)} \int_x^R\, dx'\, \ol{u_{+,\te_R}(z,x')} g(x')\bigg),
\end{split}
\end{align}
and, as a consequence, that
\begin{align}
\hg(\ol z,0)&=\frac{1}{\ol{W(z)}} \ol{u_{-,\te_0}(z,0)}
\int_0^R \, dx'\,  \ol{u_{+,\te_R}(z,x')} g(x'),  \label{3.36} \\
\hg'(\ol z,0)&=\frac{1}{\ol{W(z)}} \ol{u_{-,\te_0}'(z,0)}
\int_0^R \, dx'\,  \ol{u_{+,\te_R}(z,x')} g(x'),  \label{3.36a} \\
\hg(\ol z,R)& =\frac{1}{\ol{W(z)}} \ol{u_{+,\te_R}(z,R)}
\int_0^R \, dx'\,  \ol{u_{-,\te_0}(z,x')} g(x'),  \label{3.37} \\
\hg'(\ol z,R)&=\frac{1}{\ol{W(z)}} \ol{u_{+,\te_R}'(z,R)}
\int_0^R \, dx'\,  \ol{u_{-,\te_0}(z,x')} g(x'). \label{3.37a}
\end{align}
In turn, this permits one to compute 
\begin{align}
\begin{split}
\gamma_{\ol{\te_0'},\ol{\te_R'}} ((\Hte)^* - {\ol z} I)^{-1} g
&= \gamma_{\ol{\te_0'},\ol{\te_R'}} \hg (\bz,\cdot)   \\
& = \begin{bmatrix} \cos(\ol{\theta_0'}) \hatt g (\ol z, 0) + \sin(\ol{\theta_0'})
{\hatt g}^{\prime}(\ol z, 0) \\[1mm]
\cos(\ol{\theta_R'}) \hatt g(\ol z, R) - \sin(\ol{\theta_R'}) {\hatt g}^{\prime}(\ol z, R)
\end{bmatrix}
\label{3.42}.
\end{split}
\end{align}
Using \eqref{3.36}--\eqref{3.42} one infers, with
$[a_0 \;\, a_R]^\top \in \bbC^2$ and
$(\cdot,\cdot)_{\bbC^2}$ denoting the scalar product in $\bbC^2$, that
\begin{align}
&\big(\gamma_{\ol{\te_0'},\ol{\te_R'}} ((\Hte)^* - {\ol z} I)^{-1} g,
[a_0 \;\, a_R]^\top \big)_{\bbC^2} \no \\
& \quad = \left(
\begin{bmatrix} \cos(\ol{\theta_0'}) \hatt g (\ol z, 0) + \sin(\ol{\theta_0'})
{\hatt g}^{\prime}(\ol z, 0) \\[1mm]
\cos(\ol{\theta_R'}) \hatt g(\ol z, R) - \sin(\ol{\theta_R'}) {\hatt g}^{\prime}(\ol z, R)
\end{bmatrix}, \begin{bmatrix} a_0 \\[1mm] a_R \end{bmatrix} \right)_{\bbC^2} \no \\
& \quad =\frac{1}{W(z)} \int_0^R \, dx' \, \ol{g(x')}
\big(\cos(\theta_0') a_0 u_{-,\te_0}(z,0) u_{+,\te_R}(z,x')   \no \\
& \hspace*{1.3cm} + \sin(\theta_0') a_0 u_{-,\te_0}^{\prime}(z,0) u_{+,\te_R}(z,x')
 +\cos(\theta_R') a_R u_{+,\te_R}(z,R) u_{-,\te_0}(z,x')  \no \\
& \hspace*{1.3cm}  -\sin(\theta_R') a_R u_{+,\te_R}^{\prime}(z,R) u_{-,\te_0}(z,x') \big).
\end{align}
Hence one concludes that
\begin{align}
&\Big(\big[\gamma_{\ol{\te_0'},\ol{\te_R'}} ((\Hte)^*- {\ol z} I)^{-1}\big]^*
[a_0 \;\, a_R]^\top \Big)(x) \notag \\
&\quad =\frac{1}{W(z)} \Big(\big[\cos(\theta_0') u_{-,\te_0}(z,0)
+ \sin(\theta_0') u_{-,\te_0}^{\prime}(z,0)\big] a_0 u_{+,\te_R}(z,x)   \no \\
& \hspace*{1.95cm} + \big[\cos(\theta_R') u_{+,\te_R}(z,R)
- \sin(\theta_R') u_{+,\te_R}^{\prime}(z,R)\big] a_R u_{-,\te_0}(z,x) \Big).   \lb{3.43}
\end{align}
Consequently, writing
\begin{align}
\begin{split}
& \gamma_{\te_0',\te_R'} \big[\gamma_{\ol{\te_0'},\ol{\te_R'}}
((\Hte)^*- {\ol z} I)^{-1}\big]^* [a_0 \;\, a_R]^\top    \\
& \quad =
\begin{bmatrix}
\big(\gamma_{\te_0',\te_R'} \big[\gamma_{\ol{\te_0'},\ol{\te_R'}}
((\Hte)^*- {\ol z} I)^{-1}\big]^* [a_0 \;\, a_R]^\top\big)_1 \\[1mm]
\big(\gamma_{\te_0',\te_R'} \big[\gamma_{\ol{\te_0'},\ol{\te_R'}}
((\Hte)^*- {\ol z} I)^{-1}\big]^* [a_0 \;\, a_R]^\top\big)_2
\end{bmatrix},
\end{split}
\end{align}
it follows that
\begin{align}
& \big(\gamma_{\te_0',\te_R'} \big[\gamma_{\ol{\te_0'},\ol{\te_R'}}
((\Hte)^*- {\ol z} I)^{-1}\big]^* [a_0 \;\, a_R]^\top \big)_1   \no \\
& \quad = \f{1}{W(z)} \Big(\cos(\te_0')\big[\cos(\te_0') u_{-,\te_0}(z,0)
+ \sin(\te_0') u'_{-,\te_0}(z,0)\big] a_0   \no \\
& \qquad + \cos(\te_0')\big[\cos(\te_R') u_{+,\te_R}(z,R)
- \sin(\te_R') u'_{+,\te_R}(z,R)\big] a_R u_{-,\te_0}(z,0)   \no \\
& \qquad + \sin(\te_0')\big[\cos(\te_0') u_{-,\te_0}(z,0)
+ \sin(\te_0') u'_{-,\te_0}(z,0)\big] a_0 u'_{+,\te_R}(z,0)   \no \\
& \qquad + \sin(\te_0')\big[\cos(\te_R') u_{+,\te_R}(z,R)
- \sin(\te_R') u'_{+,\te_R}(z,R)\big] a_R u'_{-,\te_0}(z,0)\Big) ,    \\
\intertext{and} 
& \big(\gamma_{\te_0',\te_R'} \big[\gamma_{\ol{\te_0'},\ol{\te_R'}}
((\Hte)^*- {\ol z} I)^{-1}\big]^* [a_0 \;\, a_R]^\top \big)_2   \no \\
& \quad = \f{1}{W(z)} \Big(\cos(\te_R')\big[\cos(\te_0') u_{-,\te_0}(z,0)
+ \sin(\te_0') u'_{-,\te_0}(z,0)\big] a_0 u_{+,\te_R}(z,R)  \no \\
& \qquad + \cos(\te_R')\big[\cos(\te_R') u_{+,\te_R}(z,R)
- \sin(\te_R') u'_{+,\te_R}(z,R)\big] a_R    \no \\
& \qquad - \sin(\te_R')\big[\cos(\te_0') u_{-,\te_0}(z,0)
+ \sin(\te_0') u'_{-,\te_0}(z,0)\big] a_0 u'_{+,\te_R}(z,R)   \no \\
& \qquad - \sin(\te_R')\big[\cos(\te_R') u_{+,\te_R}(z,R)
- \sin(\te_R') u'_{+,\te_R}(z,R)\big] a_R u'_{-,\te_0}(z,R)\Big).
\end{align}
Hence, writing
\begin{align}
\begin{split}
& \gamma_{\te_0',\te_R'} \big[\gamma_{\ol{\te_0'},\ol{\te_R'}}
((\Hte)^*- {\ol z} I)^{-1}\big]^*    \\
& \quad = \Big[\big(\gamma_{\te_0',\te_R'} \big[\gamma_{\ol{\te_0'},\ol{\te_R'}}
((\Hte)^*- {\ol z} I)^{-1}\big]^*\big)_{j,k}\Big]_{j,k=1,2},
\end{split}
\end{align}
one obtains
\begin{align}
\begin{split}
& \big(\gamma_{\te_0',\te_R'} \big[\gamma_{\ol{\te_0'},\ol{\te_R'}}
((\Hte)^*- {\ol z} I)^{-1}\big]^*\big)_{1,1}    \\
& \quad = \f{1}{W(z)} \big[\cos(\te_0') u_{-,\te_0}(z,0)
+ \sin(\te_0') u'_{-,\te_0}(z,0)\big]    \\
& \qquad \qquad \quad \times \big[\cos(\te'_0)
+ \sin(\te'_0) u'_{+,\te_R}(z,0)\big],    \\
& \big(\gamma_{\te_0',\te_R'} \big[\gamma_{\ol{\te_0'},\ol{\te_R'}}
((\Hte)^*- {\ol z} I \big)^{-1}\big]^*\big)_{1,2}    \\
& \quad = \f{1}{W(z)} \big[\cos(\te_R') u_{+,\te_R}(z,R)
- \sin(\te_R') u'_{+,\te_R}(z,R)\big]    \\
& \qquad \qquad \quad \times \big[\cos(\te'_0) u_{-,\te_0}(z,0)
+ \sin(\te'_0) u'_{-,\te_0}(z,0)\big],    \\
& \big(\gamma_{\te_0',\te_R'} \big[\gamma_{\ol{\te_0'},\ol{\te_R'}}
((\Hte)^*- {\ol z} I)^{-1}\big]^*\big)_{2,1}    \\
& \quad = \f{1}{W(z)} \big[\cos(\te_0') u_{-,\te_0}(z,0)
+ \sin(\te_0') u'_{-,\te_0}(z,0)\big]     \\
& \qquad \qquad \quad \times \big[\cos(\te_R') u_{+,\te_R}(z,R)
- \sin(\te_R') u'_{+,\te_R}(z,R)\big],   \\
& \big(\gamma_{\te_0',\te_R'} \big[\gamma_{\ol{\te_0'},\ol{\te_R'}}
((\Hte)^*- {\ol z} I)^{-1}\big]^*\big)_{2,2}    \\
& \quad = \f{1}{W(z)} \big[\cos(\te_R') u_{+,\te_R}(z,R)
- \sin(\te_R') u'_{+,\te_R}(z,R)\big]     \\
& \qquad \qquad \quad \times \big[\cos(\te'_R)
- \sin(\te'_R) u'_{-,\te_0}(z,R)\big].    \lb{3.47}
\end{split}
\end{align}
Employing \eqref{2.5c} and \eqref{2.5d} one finally concludes from \eqref{2.20a} 
that the expressions in \eqref{3.47} are equivalent to the following ones
\begin{align}
\begin{split}
& \big(\gamma_{\te_0',\te_R'} \big[\gamma_{\ol{\te_0'},\ol{\te_R'}}
((\Hte)^*- {\ol z} I)^{-1}\big]^*\big)_{1,1}
= \sin(\te_0'-\te_0) \Big(\Lates (z)\Big)_{1,1},    \\
& \big(\gamma_{\te_0',\te_R'} \big[\gamma_{\ol{\te_0'},\ol{\te_R'}}
((\Hte)^*- {\ol z} I)^{-1}\big]^*\big)_{1,2}
= \sin(\te_R'-\te_R) \Big(\Lates (z)\Big)_{1,2},     \\
& \big(\gamma_{\te_0',\te_R'} \big[\gamma_{\ol{\te_0'},\ol{\te_R'}}
((\Hte)^*- {\ol z} I)^{-1}\big]^*\big)_{2,1}
= \sin(\te_0'-\te_0) \Big(\Lates (z)\Big)_{2,1},    \\
& \big(\gamma_{\te_0',\te_R'} \big[\gamma_{\ol{\te_0'},\ol{\te_R'}}
((\Hte)^*- {\ol z} I)^{-1}\big]^*\big)_{2,2}
= \sin(\te_R'-\te_R) \Big(\Lates (z)\Big)_{2,2},   \lb{3.50}
\end{split}
\end{align}
proving \eqref{3.33aa}.

Finally, equation \eqref{3.33A} is a consequence of \eqref{2.2A} and \eqref{3.33a}.
\end{proof}

\begin{remark} \lb{r3.6}
A formula of the type \eqref{3.33a} for Dirichlet-to-Neumann maps associated with
multi-dimensional Schr\"odinger operators was published by Amrein and Pearson
\cite{AP04} in 2004. It has recently been extended in various directions in \cite{GM08},
\cite{GM09}, \cite{GM10}, \cite{GMZ07}.
Formula \eqref{3.33a} for the Dirichlet-to-Neumann map $\Lambda_{\pi/2,\pi/2}$ in the special case $V=0$ has also been derived by Posilicano \cite[Example 5.1]{Po08}.
\end{remark}

\begin{remark} \lb{r3.7}
While it is tempting to view $\gate$ as an unbounded but densely
defined operator on $L^2((0,R); dx)$ whose domain contains the space
$C_0^\infty((0,R))$, one should note, in this case, that its adjoint
$\gate^*$ is not densely defined. Indeed, the adjoint $\gate^*$ of $\gate$
would have to be an unbounded operator from $\bbC^2$
to $L^2((0,R); dx)$ such that
\begin{equation}
(\gate f,g)_{\bbC^2} = (f,\gate^* g)_{L^2((0,R); dx)}
\, \text{ for all }\, f\in\dom(\gate),\; g\in\dom(\gate^*).  \lb{3.55}
\end{equation}
In particular, choosing $f\in C_0^\infty((0,R))$, in which case $\gate f =0$,
one concludes that
\begin{equation}
(f,\gate^* g)_{L^2((0,R); dx)}=0 \, \text{ for
all } \, f\in C_0^\infty((0,R)).
\end{equation}
Thus, one obtains $\gate^* g = 0$ for all
$g\in\dom(\gate^*)$. Since obviously $\gate \neq 0$, \eqref{3.55}
implies $\dom(\gate^*)=\{0\}$ and hence $\gate$ is not a closable
linear operator in $L^2((0,R); dx)$. This is the reason for our careful choice of
notation in \eqref{3.33aa} and \eqref{3.33a}.
\end{remark}

We conclude this section by providing an explicit example in the special case $V=0$ a.e.\ on
$[0,R]$. For notational purposes we add the superscript $(0)$ to the corresponding quantities below:

\begin{example}\lb{e3.8}
Let $V=0$ a.e.\ on $[0,R]$, $x_0 \in (0,R)$,  $\te_0, \te_R, \te \in S_{2 \pi}$,
$x, x', s \in [0,R]$, and let
\begin{align}
&f(z,s,\al,\be)=f(z,s,\be,\al)  \\
&\quad
=z\sin(\al)\sin(\be)\sin(\sqrt{z}s) +\sqrt{z}\sin(\al+\be)\cos(\sqrt{z}s) - \cos(\al)\cos(\be)\sin(\sqrt{z}s), \no \\
&g(z,s,\al,\be)=f(z,s,\al+\pi/2,\be)  \\
&\quad
=  z\cos(\al)\sin(\be)\sin(\sqrt{z}s) +\sqrt{z}\cos(\al+\be)\cos(\sqrt{z}s) + \sin(\al)\cos(\be)\sin(\sqrt{z}s).  \no
\end{align}
Then,
\begin{align}
&u^{(0)} (z,x,(\te_0,c_0),(\te_R,c_R)) = \frac{c_0f(z,R-x,0,\te_R)
+ c_Rf(z,x,\te_0,0)}{f(z,R,\te_0,\te_R)},
\\
&u_{+,\te_R}^{(0)} (z,x) = u(z,\cdot \, ;(0,1),(\te_R,0))
= \frac{f(z,R-x,0,\te_R)}{f(z,R,0,\te_R)},
\\
&u_{+,\te_R}^{(0)\;'} (z,x)= \frac{f'(z,R-x,0,\te_R)}{f(z,R,0,\te_R)}
= \frac{f(z,R-x,\pi/2,\te_R)}{f(z,R,0,\te_R)},
\\
&u_{-,\te_0}^{(0)} (z,x) = u(z,\cdot \, ;(\te_0,0),(0,1))
= \frac{f(z,x,\te_0,0)}{f(z,R,\te_0,0)},
\\
&u_{-,\te_0}^{(0)\; '}(z,x) = \frac{f'(z,x,\te_0,0)}{f(z,R,\te_0,0)}
= -\frac{f(z,x,\te_0,\pi/2)}{f(z,R,\te_0,0)},
\\
&W(u^{(0)}_{+,\te_R}(z,\cdot), u^{(0)}_{-,\te_0}(z,\cdot))
= \frac{-\sqrt{z}f(z,R,\te_0,\te_R)}{f(z,R,\te_0,0)f(z,R,0,\te_R)},
\\
& m_{+,\te_0}^{(0)} (z,\te_R) = \frac{g(z,R,\te_0,\te_R)}{f(z,R,\te_0,\te_R)},
\\
& m_{-,\te_R}^{(0)} (z,\te_0) = - \frac{g(z,R,\te_R,\te_0)}{f(z,R,\te_R,\te_0)},
\\
& \Lambda_{\te_0,\te_R}^{(0)} (z) = \begin{pmatrix}
\frac{g(z,R,\te_0,\te_R)}{f(z,R,\te_0,\te_R)} & \frac{-\sqrt{z}}{f(z,R,\te_0,\te_R)} \\
\frac{-\sqrt{z}}{f(z,R,\te_0,\te_R)} & \frac{g(z,R,\te_R,\te_0)}{f(z,R,\te_0,\te_R)}
\end{pmatrix},   \lb{3.72} 
\no \\
&m_{+,0}^{(0)} (z, x_0,\te_R) = \f{u_{+,\te_R}^{(0)\;'} (z,x_0)}{u_{+,\te_R}^{(0)} (z,x_0)}
= \frac{g(z,R-x_0,0,\te_R)}{f(z,R-x_0,0,\te_R)},
\\
&m_{-,0}^{(0)} (z, x_0,\te_0) = \f{u_{-,\te_0}^{(0)\; '}(z,x_0)}{u_{-,\te_0}^{(0)}(z,x_0)}
= -\frac{g(z,x_0,0,\te_0)}{f(z,x_0,0,\te_0)},
\\
&G_{\te_0,\te_R}^{(0)} (z,x,x') = \big(\Hte^{(0)} -z I\big)^{-1}(x,x')  \no \\
&\quad = \frac{1}{W(u^{(0)}_{+,\te_R}(z,\cdot), u^{(0)}_{-,\te_0}(z,\cdot))}
\begin{cases}
u^{(0)}_{-,\te_0}(z,x')u^{(0)}_{+,\te_R}(z,x), & 0\le x'\le x,\\[1mm]
u^{(0)}_{-,\te_0}(z,x)u^{(0)}_{+,\te_R}(z,x'), & 0\le x\le x',
\end{cases}  \no \\[10pt]
 &\quad = \begin{cases}
\dfrac{f(z,x',\te_0,0)f(z,R-x,0,\te_R)}{-\sqrt{z}f(z,R,\te_0,\te_R)},&  0\le x'\le x,\\[10pt]
\dfrac{f(z,x,\te_0,0)f(z,R-x',0,\te_R)}{-\sqrt{z}f(z,R,\te_0,\te_R)},&  0\le x\le x',
\end{cases}  \quad
z\in\bbC\backslash\sigma\big(\Hte^{(0)}\big).
\end{align}
\end{example}

The special case of the Neumann-to-Dirichlet map, $\Lambda^{(0)}_{\pi/2,\pi/2}$ in \eqref{3.72}, was computed in \cite[Example\ 4.1]{DM92}, and more recently, the
special case of the Dirichlet-to-Neumann map, $\Lambda^{(0)}_{0,0}$ in \eqref{3.72}, was computed in \cite[Example\ 5.1]{Po08}.

\section{Linear Fractional Transformations and the Herglotz Property in the 
Self-Adjoint Case}  \label{s4}

The principal purpose of this section is to prove that $\Lates(z)$ and $\Lades(z)$ 
satisfy a linear fractional transformation. As a consequence we will show that 
$\Lates(z) S_{\te_0'-\te_0,\te_R'-\te_R}$ is a $2 \times 2$ matrix-valued Herglotz 
function in the special self-adjoint case where $V$ and 
$\theta_0, \theta_R, \theta_0', \theta_R'$ are real-valued.

In the following we denote by $\bbC^{n \times n}$, $n\in\bbN$, the set of 
$n \times n$ matrices with complex-valued entries, and by $I_n$ the identity 
matrix in $\bbC^n$.

Let $A= \big[A_{j,k}\big]_{1\leq j,k\leq 2}\in \bbC^{4 \times 4}$, with 
$A_{j,k}\in \bbC^{2 \times 2}$, $1\leq j,k\leq 2$, and $L\in
\bbC^{2 \times 2}$, chosen such that $\ker (A_{1,1}
+A_{1,2} L)=\{0\}$; that is, $(A_{1,1}+A_{1,2} L)$ is invertible in
$\bbC^2$. Define for such $A$ (cf., e.g., \cite{KS74}),
\begin{equation}
M_A(L)=(A_{2,1}+A_{2,2} L)(A_{1,1}+A_{1,2} L)^{-1},   \lb{4.1}
\end{equation}
and observe that
\begin{align}
& M_{I_{4}} (L) = L, \\
& M_{AB}(L) = M_A(M_B(L)), \lb{4.2} \\
& M_{A^{-1}} (M_A(L)) = L = M_A(M_{A^{-1}} (L)), \quad A \, \text{ invertible,}  
\lb{4.2a} \\
& M_A(L)=M_{AB^{-1}}(M_B(L)),   \lb{4.2b}
\end{align}
whenever the right-hand sides (and hence the left-hand sides) in 
\eqref{4.2}--\eqref{4.2b} exist.  

\begin{theorem}  \lb{t4.1}
Assume that $\te_0, \te_R, \te_0^{\prime},\te_R^{\prime}, \de_0,\de_R,\de_0^\prime,\de_R^\prime\in S_{2 \pi}$, $\de_0^\prime-\de_0\ne 0 \, \text{\rm mod} (\pi)$, 
$\de_R^\prime-\de_R \ne 0 \, \text{\rm mod} (\pi)$, and that 
$z\in\bbC\backslash\big(\sigma(H_{\te_0,\te_R})\cup\sigma(H_{\de_0,\de_R})\big)$.  Then, with $\Ste$ defined as in \eqref{2.14},
\begin{align}
\begin{split}
\Lates(z) &= \big(S_{\de_0^\prime-\de_0,\de_R^\prime-\de_R}\big)^{-1}
\big[S_{\de_0^\prime-\te_0^\prime,\de_R^\prime-\te_R^\prime}+
S_{\te_0^\prime-\de_0,\te_R^\prime-\de_R}\Lades(z) \big]  \label{4.3} \\
& \quad \times
\big[S_{\de_0^\prime-\te_0,\de_R^\prime-\te_R}+
S_{\te_0-\de_0,\te_R-\de_R}\Lades(z) \big]^{-1}S_{\de_0^\prime-\de_0,\de_R^\prime-\de_R}.
\end{split}
\end{align} 
\end{theorem}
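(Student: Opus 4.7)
The plan is to derive \eqref{4.3} purely algebraically, by expressing both boundary traces $\gate(u)$ and $\gates(u)$ of an arbitrary solution $u$ of \eqref{2.3} as linear combinations of $\gade(u)$ and $\gades(u)$, and then eliminating between the defining identities $\gates(u) = \Lates(z)\gate(u)$ and $\gades(u) = \Lades(z)\gade(u)$ to read off $\Lates(z)$.

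The workhorse is an elementary Cramer's-rule identity: for $\alpha,\beta,\beta' \in \bbC$ with $\sin(\beta'-\beta)\ne 0$ and any scalars $A,B\in\bbC$,
\begin{equation*}
\sin(\beta'-\beta)\bigl[\cos(\alpha)A + \sin(\alpha)B\bigr] = \sin(\beta'-\alpha)\bigl[\cos(\beta)A + \sin(\beta)B\bigr] + \sin(\alpha-\beta)\bigl[\cos(\beta')A + \sin(\beta')B\bigr].
\end{equation*}
I would apply this at $x=0$ with $(A,B) = (u(0), u'(0))$, $(\beta,\beta')=(\de_0,\de_0')$, and $\alpha\in\{\te_0,\te_0'\}$, and analogously at $x=R$ with $(A,B) = (u(R), -u'(R))$, $(\beta,\beta')=(\de_R,\de_R')$, and $\alpha\in\{\te_R,\te_R'\}$. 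Packaging the resulting four scalar identities into the diagonal matrices $S_{\cdot,\cdot}$ yields
\begin{align*}
S_{\de_0'-\de_0,\de_R'-\de_R}\,\gate(u) &= S_{\de_0'-\te_0,\de_R'-\te_R}\,\gade(u) + S_{\te_0-\de_0,\te_R-\de_R}\,\gades(u),\\
S_{\de_0'-\de_0,\de_R'-\de_R}\,\gates(u) &= S_{\de_0'-\te_0',\de_R'-\te_R'}\,\gade(u) + S_{\te_0'-\de_0,\te_R'-\de_R}\,\gades(u).
\end{align*}
Substituting $\gades(u) = \Lades(z)\gade(u)$ and $\gates(u) = \Lates(z)\gate(u)$, and using that $\gade(u)$ ranges over all of $\bbC^2$ as $u$ varies (by Lemma \ref{l2.2}, since $z\notin\sigma(H_{\de_0,\de_R})$), promotes these to $2\times 2$ matrix identities. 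Eliminating the $\gate(u)$ factor formally gives \eqref{4.3} once the middle bracket is inverted.

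The main obstacle is justifying the inversion of $\bigl[S_{\de_0'-\te_0,\de_R'-\te_R} + S_{\te_0-\de_0,\te_R-\de_R}\Lades(z)\bigr]$. Rewriting the first of the two matrix identities above via \eqref{2.6} shows that this bracket equals $S_{\de_0'-\de_0,\de_R'-\de_R}\,\Lambda_{\de_0,\de_R}^{\te_0,\te_R}(z)$, and Corollary \ref{c2.4} identifies its inverse as $\bigl[\Lambda_{\te_0,\te_R}^{\de_0,\de_R}(z)\bigr]\bigl[S_{\de_0'-\de_0,\de_R'-\de_R}\bigr]^{-1}$, which is well-defined precisely when $z\in\bbC\setminus(\sigma(\Hte)\cup\sigma(H_{\de_0,\de_R}))$; this is exactly the standing hypothesis. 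Multiplying through and collecting factors then delivers \eqref{4.3}.
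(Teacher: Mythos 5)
Your proposal is correct, and it reaches \eqref{4.3} by a genuinely different route than the paper. The paper first expresses both $\Lates(z)$ and $\Lades(z)$ as linear fractional transformations $M_A(\Lambda_{0,0}(z))$, $M_B(\Lambda_{0,0}(z))$ of the fixed reference map $\Lambda_{0,0}(z)$ (cf.\ \eqref{4.6}--\eqref{4.8}), invokes the composition law $M_A(L)=M_{AB^{-1}}(M_B(L))$, and computes $AB^{-1}$ explicitly in \eqref{4.9}; the price is the temporary extra hypothesis $z\notin\sigma(H_{0,0})$, which must then be removed by meromorphic continuation. Your argument eliminates the middleman: the Cramer-type identity $\sin(\beta'-\beta)[\cos(\alpha)A+\sin(\alpha)B]=\sin(\beta'-\alpha)[\cos(\beta)A+\sin(\beta)B]+\sin(\alpha-\beta)[\cos(\beta')A+\sin(\beta')B]$ (which is exactly the trigonometry hidden in the entries of $AB^{-1}$) relates the $\te$-traces directly to the $\de$-traces, so no auxiliary spectrum enters and no continuation step is needed. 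Your treatment of the invertibility of the middle bracket is also more explicit than the paper's (which relies on the blanket caveat ``whenever the right-hand sides exist'' following \eqref{4.2b}): identifying $S_{\de_0'-\te_0,\de_R'-\te_R}+S_{\te_0-\de_0,\te_R-\de_R}\Lades(z)$ with $S_{\de_0'-\de_0,\de_R'-\de_R}\,\Lambda_{\de_0,\de_R}^{\te_0,\te_R}(z)$ and appealing to \eqref{2.48} pins down precisely why the stated hypotheses on $z$ suffice. What the paper's detour buys in exchange is the $M_A$-calculus itself, which is reused later in the section (Lemmas \ref{l4.4}, \ref{l4.5}, Theorem \ref{t4.6}) to establish the Herglotz property; your derivation of \eqref{4.3} stands on its own but would not by itself set up that machinery.
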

\begin{proof}
Assume $\te_0, \te_R, \te_0^{\prime},\te_R^{\prime}\in S_{2 \pi}$,
$z\in\bbC\backslash\sigma(\Hte)$,
and let $\psi_j(z,\cdot)$, $j=1,2$, denote a basis for the solutions of \eqref{2.3}. Then, with $\Cte$ and $\Ste$ as defined in \eqref{2.14}, equation \eqref{2.13} yields
\begin{align}\label{4.4}
&\Lates (z) \bigg(\Cte\begin{bmatrix} \psi_1(z,0)&\psi_2(z,0)\\ \psi_1(z,R)&\psi_2(z,R)\end{bmatrix}+
\Ste\begin{bmatrix} \psi'_1(z,0)&\psi'_2(z,0)\\ -\psi'_1(z,R)&-\psi'_2(z,R)\end{bmatrix}\bigg)\no  \\
& \quad =\bigg(\Ctes\begin{bmatrix} \psi_1(z,0)&\psi_2(z,0)\\ \psi_1(z,R)&\psi_2(z,R)\end{bmatrix}+
\Stes\begin{bmatrix} \psi'_1(z,0)&\psi'_2(z,0)\\ -\psi'_1(z,R)&-\psi'_2(z,R)\end{bmatrix}\bigg).
\end{align}

From Remark~\ref{r2.6}, recall, for $z\in\bbC\backslash\sigma(H_{0,0})$, that $\Lambda_{D,N}(z)=\Lazzqq(z)=\Lambda_{0,0}(z)$ and note that \eqref{2.32} then yields
\begin{equation}\label{4.5}
\Lambda_{0,0}(z) = \begin{bmatrix} \psi'_1(z,0)&\psi'_2(z,0)\\ -\psi'_1(z,R)&-\psi'_2(z,R)
\end{bmatrix}
\begin{bmatrix} \psi_1(z,0)&\psi_2(z,0)\\ \psi_1(z,R)&\psi_2(z,R)\end{bmatrix}^{-1}.
\end{equation}
Then, with $\Cte$ defined as in \eqref{2.14}, and with
$z\in\bbC\big\backslash\big(\sigma(\Hte)\cup \sigma(H_{0,0})\big)$, \eqref{4.4} can be written as
\begin{equation} \label{4.6}
\Lates(z) =\big[\Ctes+\Stes \Lambda_{0,0}(z)\big]
\big[\Cte+\Ste \Lambda_{0,0}(z)\big]^{-1}.
\end{equation}

Next, assume that $\te_0, \te_R, \te_0^{\prime},\te_R^{\prime}, \de_0,\de_R,\de_0^\prime,\de_R^\prime\in S_{2 \pi}$, and let
$A, B\in \bbC^{4 \times 4}$ be defined by
\begin{equation}\label{4.7}
A= \begin{bmatrix} \Cte & \Ste \\ C_{\te_0^\prime,\te_R^\prime} & S_{\te_0^\prime,\te_R^\prime}\end{bmatrix},\quad
B=\begin{bmatrix} C_{\de_0,\de_R} &
S_{\de_0,\de_R}\\ C_{\de_0^\prime,\de_R^\prime} & S_{\de_0^\prime,\de_R^\prime}\end{bmatrix}.
\end{equation}
Then, by \eqref{4.1}, and \eqref{4.6},
\begin{equation}\lb{4.8}
\Lates(z) = M_A(\Lambda_{0,0}(z)), \qquad \Lades(z) = M_B(\Lambda_{0,0}(z))
\end{equation}
for $z\in\bbC\big\backslash\big(\sigma(\Hte)
\cup\sigma(H_{\de_0,\de_R})\cup \sigma(H_{0,0})\big)$. If additionally, 
one assumes that  
$\de_0^\prime-\de_0\ne 0 \, \text{\rm mod} (\pi)$, 
$\de_R^\prime-\de_R \ne 0 \, \text{\rm mod} (\pi)$, then
\begin{equation}\lb{4.9}
AB^{-1}=
\begin{bmatrix}
\big(S_{\de_0^\prime-\de_0,\de_R^\prime-\de_R}\big)^{-1}S_{\de_0^\prime-\te_0,\de_R^\prime-\te_R} & \big(S_{\de_0^\prime-\de_0,\de_R^\prime-\de_R}\big)^{-1}S_{\te_0-\de_0,\te_R-\de_R}\\ \big(S_{\de_0^\prime-\de_0,\de_R^\prime-\de_R}\big)^{-1}S_{\de_0^\prime-\te_0^\prime,\de_R^\prime-\te_R^\prime} & \big(S_{\de_0^\prime-\de_0,\de_R^\prime-\de_R}\big)^{-1}S_{\te_0^\prime-\de_0,\te_R^\prime-\de_R}
\end{bmatrix},
\end{equation}
and \eqref{4.3} then follows from \eqref{4.1}, \eqref{4.2}, \eqref{4.8} and \eqref{4.9}, given that
\begin{equation}\lb{4.10}
\Lates(z) = M_A(\Lambda_{0,0}(z))=M_{AB^{-1}}(M_B(\Lambda_{0,0}(z))=M_{AB^{-1}}(\Lades(z)).
\end{equation}
By meromorphic continuation, \eqref{4.3} holds for $z\in\bbC\big\backslash\big(\sigma(\Hte)\cup\sigma(H_{\de_0,\de_R})\big)$.
\end{proof}

If $\Hte$ and $H_{\de_0,\de_R}$ are self-adjoint, then \eqref{4.3} holds for 
$z\in\bbC\backslash\bbR$.  

\begin{remark} \lb{r4.2}
In the special case of \eqref{4.3} which relates two generalized Dirichlet-to-Neumann maps one obtains 
\begin{align}\label{4.11}
\Late(z)&=\big[-S_{\te_0-\de_0,\te_R-\de_R} + C_{\te_0-\de_0,\te_R-\de_R}
\Lade(z)\big]\notag\\
& \quad \times\big[C_{\te_0-\de_0,\te_R-\de_R}+ S_{\te_0-\de_0,\te_R-\de_R}
\Lade(z)\big]^{-1},    \\
& \hspace*{-.6cm} 
\te_0, \te_R, \de_0,\de_R\in S_{2 \pi}, \;  
z\in\bbC\backslash\big(\sigma(H_{\te_0,\te_R})\cup\sigma(H_{\de_0,\de_R})\big). \no 
\end{align}
\end{remark}

The following reformulation of \eqref{4.3} (motivated by the form of \eqref{3.33aa} in 
Theorem \ref{t3.5}) will be crucial in the proof of Theorem \ref{t4.6}.  

\begin{corollary}  \lb{c4.3}
Assume that $\te_0, \te_R, \te_0^{\prime},\te_R^{\prime}, \de_0,\de_R,\de_0^\prime,\de_R^\prime\in S_{2 \pi}$, $\te_0^\prime-\te_0\ne 0 \, \text{\rm mod} (\pi)$, 
$\te_R^\prime-\te_R \ne 0 \, \text{\rm mod} (\pi)$,
$\de_0^\prime-\de_0\ne 0 \, \text{\rm mod} (\pi)$, 
$\de_R^\prime-\de_R \ne 0 \, \text{\rm mod} (\pi)$, and that 
$z\in\bbC\backslash\big(\sigma(H_{\te_0,\te_R})\cup\sigma(H_{\de_0,\de_R})\big)$.  Then  
\begin{align} \lb{4.11a}
& \Lates(z) S_{\te_0'-\te_0,\te_R'-\te_R}   \no \\
& \quad = \Big[S_{\de_0^\prime-\te_0^\prime,\de_R^\prime-\te_R^\prime} + 
\big(S_{\de_0^\prime-\de_0,\de_R^\prime-\de_R}\big)^{-1}
S_{\te_0^\prime-\de_0,\te_R^\prime-\de_R}\Lades(z) 
S_{\de_0^\prime-\de_0,\de_R^\prime-\de_R}\Big]      \no\\
& \qquad \, \times
\Big[\big(S_{\te_0'-\te_0,\te_R'-\te_R}\big)^{-1}
S_{\de_0^\prime-\te_0,\de_R^\prime-\te_R} + 
\big(S_{\te_0'-\te_0,\te_R'-\te_R}\big)^{-1}
\big(S_{\de_0^\prime-\de_0,\de_R^\prime-\de_R}\big)^{-1}   \no \\
& \hspace*{4.5cm} \times S_{\te_0-\de_0,\te_R-\de_R}\Lades(z) 
S_{\de_0'-\de_0,\de_R'-\de_R}\Big]^{-1}.
\end{align}
\end{corollary}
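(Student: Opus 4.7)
The plan is to derive \eqref{4.11a} from \eqref{4.3} by purely algebraic manipulation, exploiting the fact that all the matrices of the form $S_{\alpha,\beta} = \mathrm{diag}(\sin(\alpha),\sin(\beta))$ are diagonal and hence commute mutually (and, when invertible, commute with each other's inverses). In particular, any such $S_{\alpha,\beta}$ commutes with $S_{\delta_0'-\delta_0,\delta_R'-\delta_R}$ and with $S_{\theta_0'-\theta_0,\theta_R'-\theta_R}$, although of course none of them commutes with $\Lambda_{\delta_0,\delta_R}^{\delta_0',\delta_R'}(z)$ in general.

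The first step will be to recognize that the two brackets on the right-hand side of \eqref{4.11a} are, respectively, conjugates of the two brackets appearing in \eqref{4.3}. Explicitly, set $E := S_{\delta_0'-\delta_0,\delta_R'-\delta_R}$ and $F := S_{\theta_0'-\theta_0,\theta_R'-\theta_R}$. Since $E$ commutes with $S_{\delta_0'-\theta_0',\delta_R'-\theta_R'}$ and with $S_{\theta_0'-\delta_0,\theta_R'-\delta_R}$, the first bracket on the right-hand side of \eqref{4.11a} equals
\begin{equation*}
S_{\delta_0'-\theta_0',\delta_R'-\theta_R'} + E^{-1} S_{\theta_0'-\delta_0,\theta_R'-\delta_R}\Lades(z) E
= E^{-1}\bigl[S_{\delta_0'-\theta_0',\delta_R'-\theta_R'} + S_{\theta_0'-\delta_0,\theta_R'-\delta_R}\Lades(z)\bigr] E.
\end{equation*}
An entirely analogous computation, now factoring out $F^{-1}E^{-1}$ on the left and $E$ on the right, identifies the second bracket on the right-hand side of \eqref{4.11a} with
\begin{equation*}
F^{-1} E^{-1}\bigl[S_{\delta_0'-\theta_0,\delta_R'-\theta_R} + S_{\theta_0-\delta_0,\theta_R-\delta_R}\Lades(z)\bigr] E.
\end{equation*}

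The second step is then to invert the latter and multiply by the former, noting that the $E$'s cancel in the middle:
\begin{align*}
\text{RHS of \eqref{4.11a}}
&= E^{-1}\bigl[S_{\delta_0'-\theta_0',\delta_R'-\theta_R'} + S_{\theta_0'-\delta_0,\theta_R'-\delta_R}\Lades(z)\bigr] \\
& \quad \times \bigl[S_{\delta_0'-\theta_0,\delta_R'-\theta_R} + S_{\theta_0-\delta_0,\theta_R-\delta_R}\Lades(z)\bigr]^{-1} E F.
\end{align*}
Applying Theorem \ref{t4.1}, this is exactly $\Lates(z) F = \Lates(z) S_{\theta_0'-\theta_0,\theta_R'-\theta_R}$, as required. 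The invertibility of all relevant matrices (in particular, of $F$ and $E$) is guaranteed by the hypotheses $\theta_0'-\theta_0, \theta_R'-\theta_R, \delta_0'-\delta_0, \delta_R'-\delta_R \ne 0 \,\text{\rm mod}(\pi)$.

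The only place that requires any care is verifying that the diagonal factors can be moved past one another but not past $\Lades(z)$; consequently, the conjugations $E^{-1}(\dott) E$ introduced above must be performed without collapsing $E^{-1}\Lades(z) E$ to $\Lades(z)$. I do not anticipate a genuine obstacle: the whole argument reduces to bookkeeping with commuting diagonal matrices and one application of \eqref{4.3}.
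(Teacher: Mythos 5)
Your proposal is correct and follows essentially the same route as the paper: both proofs reduce \eqref{4.11a} to \eqref{4.3} by right-multiplying by $S_{\te_0'-\te_0,\te_R'-\te_R}$, inserting $S_{\de_0'-\de_0,\de_R'-\de_R}\big(S_{\de_0'-\de_0,\de_R'-\de_R}\big)^{-1}$ around $\Lades(z)$, and exploiting the commutativity of the diagonal $S$-matrices. Your version merely organizes the bookkeeping more explicitly by exhibiting the two brackets as conjugates $E^{-1}[\,\cdot\,]E$ and $F^{-1}E^{-1}[\,\cdot\,]E$ of the brackets in \eqref{4.3}, which is a clean way to present the same computation.
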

\begin{proof}
This follows from multiplying \eqref{4.3} by $S_{\te_0'-\te_0,\te_R'-\te_R}$ from 
the right, inserting the terms 
$S_{\de_0^\prime-\de_0,\de_R^\prime-\de_R} 
\big(S_{\de_0^\prime-\de_0,\de_R^\prime-\de_R}\big)^{-1}$ to the right of 
$\Lades(z)$ twice, and then algebraically manipulate the various terms in the 
equation resulting from these insertions into \eqref{4.3} (repeatedly using 
$(ST)^{-1} = T^{-1}S^{-1}$, etc.).  
\end{proof}

Since the self-adjoint case is the principal focus for the remainder of this section,  
we now recall some additional pertinent facts from \cite{KS74} (see also 
\cite[Sect.\ 6]{GT00}) on linear fractional transformations of matrices. Defining
\begin{equation}\lb{4.12}
J_{4}=\begin{bmatrix} 0 & -I_2\\ I_2 & 0\end{bmatrix},
\end{equation}
and 
\begin{equation}\lb{4.13}
\cA_{4}=\{ A\in\bbC^{4 \times 4} \, | \, A^*J_{4}A=J_{4}\}, 
\end{equation}
representing $A\in \bbC^{4 \times 4}$ by
\begin{equation}\lb{4.14}
A= \begin{bmatrix} A_{1,1} & A_{1,2}\\ A_{2,1} & A_{2,2}\end{bmatrix},
\quad A_{p,q}\in \bbC^{2 \times 2},\quad 1\leq p,q\leq 2,
\end{equation}
the condition $A^*J_{4}A=J_{4}$ in \eqref{4.13} is equivalent to 
\begin{align}
\begin{split}  \lb{4.15}
& A^*_{1,1}A_{2,1}=A^*_{2,1}A_{1,1},\quad
A^*_{2,2}A_{1,2}=A^*_{1,2}A_{2,2},  \\
& A^*_{2,2}A_{1,1}-A^*_{1,2}A_{2,1} = I_2 
= A^*_{1,1}A_{2,2} - A^*_{2,1}A_{1,2},
\end{split} 
\end{align}
or equivalently, to 
\begin{equation}\lb{4.16}
\begin{bmatrix}A^*_{2,2} & -A^*_{1,2}\\-A^*_{2,1}&
A^*_{1,1}\end{bmatrix} \begin{bmatrix} A_{1,1} & A_{1,2}\\
A_{2,1} & A_{2,2}\end{bmatrix} = I_{4}.
\end{equation}
Since left inverses in $\bbC^{4 \times 4}$ are also right inverses, \eqref{4.16}
implies
\begin{equation}\lb{4.17}
\begin{bmatrix} A_{1,1} & A_{1,2}\\ A_{2,1} & A_{2,2}\end{bmatrix}
\begin{bmatrix} A^*_{2,2} & -A^*_{1,2}\\ -A^*_{2,1} &
A^*_{1,1}\end{bmatrix} = I_{4},
\end{equation}
that is,
\begin{align}
\begin{split}  \lb{4.18}
& A_{1,1}A^*_{1,2}=A_{1,2}
A^*_{1,1},\quad A_{2,2}A^*_{2,1}=A_{2,1}A^*_{2,2},   \\
& A_{2,2}A^*_{1,1}-A_{2,1}A^*_{1,2}=I_{2}
=A_{1,1}A^*_{2,2}-A_{1,2} A^*_{2,1},
\end{split}
\end{align}
or equivalently,
\begin{equation}\lb{4.19}
AJ_{4}A^*=J_{4}.
\end{equation}
In particular,
\begin{equation}\lb{4.20}
A\in \cA_{4} \, \text{ if and
only if } \, A^{-1}\in \cA_{4}.
\end{equation}

At this point we turn to the particularly important special self-adjoint case where 
$V$ and $\theta_0, \theta_R, \theta_0', \theta_R'$ are real-valued. In this case 
we will now prove that $\Lates(z) S_{\te_0'-\te_0,\te_R'-\te_R}$ is a $2 \times 2$ 
matrix-valued Herglotz function. But first we note the following result:

\begin{lemma} \lb{l4.4}
Assume that $\te_0, \te_R, \te_0^{\prime},\te_R^{\prime}, \de_0,\de_R,\de_0^\prime,\de_R^\prime\in [0,2 \pi)$, $\te_0^\prime-\te_0\ne 0 \, \text{\rm mod} (\pi)$, 
$\te_R^\prime-\te_R \ne 0 \, \text{\rm mod} (\pi)$,
$\de_0^\prime-\de_0\ne 0 \, \text{\rm mod} (\pi)$, 
$\de_R^\prime-\de_R \ne 0 \, \text{\rm mod} (\pi)$, 
and introduce in accordance with \eqref{4.11a},  
\begin{align}
\begin{split}   \lb{4.22} 
& A(\te, \de) = \big[ A(\te, \de)_{j,k}\big]_{1 \leq j,k \leq 2} \in \bbC^{4 \times 4},   \\
& A(\te, \de)_{1,1} = \big(S_{\te_0'-\te_0,\te_R'-\te_R}\big)^{-1} 
S_{\de_0'-\te_0,\de_R'-\te_R},   \\
& A(\te, \de)_{1,2} = \big(S_{\te_0'-\te_0,\te_R'-\te_R}\big)^{-1} 
\big(S_{\de_0'-\de_0,\de_R'-\de_R}\big)^{-1} S_{\te_0-\de_0,\te_R-\de_R},   \\
& A(\te, \de)_{2,1} = S_{\de_0'-\te_0',\de_R'-\te_R'},    \\ 
& A(\te, \de)_{2,2} = \big(S_{\de_0'-\de_0,\de_R'-\de_R}\big)^{-1} 
S_{\te_0'-\de_0,\te_R'-\de_R}. 
\end{split}
\end{align}
Then
\begin{equation}
A(\te, \de) \in \cA_4.    \lb{4.23} 
\end{equation}
\end{lemma}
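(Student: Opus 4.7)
The plan is to exploit the fact that every block of $A(\te,\de)$ is a real diagonal $2\times 2$ matrix in order to reduce the claim $A(\te,\de) \in \cA_4$ to two copies of a scalar trigonometric identity.

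Since $\te_0,\te_R,\te_0^\prime,\te_R^\prime,\de_0,\de_R,\de_0^\prime,\de_R^\prime \in [0,2\pi)$ are real, each $S_{\alpha,\beta}$ appearing in \eqref{4.22} is a real diagonal matrix, and therefore so is each block $A(\te,\de)_{j,k}$. In particular, $A(\te,\de)^* = A(\te,\de)^T$ and the four blocks commute pairwise. I would first apply the simultaneous row/column permutation $(1,2,3,4) \mapsto (1,3,2,4)$, implemented by the involution $P \in \bbR^{4\times 4}$ that swaps $e_2$ and $e_3$. A short computation shows that conjugation by $P$ regroups $A(\te,\de)$ by ``diagonal position'' rather than by block index, yielding $P A(\te,\de) P = M_0 \oplus M_R$, where
\[
M_\star = \begin{bmatrix} \dfrac{\sin(\de_\star^\prime - \te_\star)}{\sin(\te_\star^\prime - \te_\star)} & \dfrac{\sin(\te_\star - \de_\star)}{\sin(\te_\star^\prime - \te_\star)\sin(\de_\star^\prime - \de_\star)} \\[3mm] \sin(\de_\star^\prime - \te_\star^\prime) & \dfrac{\sin(\te_\star^\prime - \de_\star)}{\sin(\de_\star^\prime - \de_\star)} \end{bmatrix}, \qquad \star \in \{0,R\},
\]
and simultaneously $P J_4 P = J \oplus J$ with $J = \bigl[\begin{smallmatrix} 0 & -1 \\ 1 & 0 \end{smallmatrix}\bigr]$. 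Consequently, the defining relation $A(\te,\de)^* J_4 A(\te,\de) = J_4$ is equivalent to $M_\star^T J M_\star = J$ for both $\star \in \{0,R\}$.

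For any $2 \times 2$ matrix $M$, the elementary identity $M^T J M = \det(M)\, J$ reduces each of the two block conditions to the scalar equality $\det(M_\star) = 1$, i.e.\ to the Ptolemy-type trigonometric identity
\[
\sin(\de_\star^\prime - \te_\star)\sin(\te_\star^\prime - \de_\star) - \sin(\te_\star - \de_\star)\sin(\de_\star^\prime - \te_\star^\prime) = \sin(\te_\star^\prime - \te_\star)\sin(\de_\star^\prime - \de_\star).
\]
I would verify this via the product-to-sum formula $2 \sin P \sin Q = \cos(P-Q) - \cos(P+Q)$: the two ``$P-Q$'' arguments on the left coincide up to sign and their cosine contributions cancel, while the two ``$P+Q$'' arguments are $a+b$ and $b-a$, with $a = \te_\star^\prime - \te_\star$ and $b = \de_\star^\prime - \de_\star$, so that $\cos(b-a) - \cos(a+b) = 2\sin a \sin b$ yields the right-hand side.

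The only real obstacle is the careful bookkeeping in the permutation step and the angle arithmetic in the trig identity. The nonvanishing hypotheses $\te_0^\prime - \te_0, \te_R^\prime - \te_R, \de_0^\prime - \de_0, \de_R^\prime - \de_R \neq 0 \,\text{\rm mod} (\pi)$ enter precisely to ensure that the denominators defining $M_\star$ are nonzero, so no further case analysis is needed.
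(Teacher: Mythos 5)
Your proof is correct; I checked the permutation bookkeeping ($PA(\te,\de)P=M_0\oplus M_R$, $PJ_4P=J\oplus J$), the identity $M^TJM=\det(M)\,J$, and the product-to-sum computation, and all steps go through, with the nonvanishing hypotheses used exactly where you say. The paper disposes of the lemma by asserting that a ``tedious explicit computation'' verifies the relations \eqref{4.15} directly; if one carries that out, the first line of \eqref{4.15} is automatic because the blocks are real, diagonal, and commuting, and the second line reduces entrywise to the same Ptolemy-type identity
\begin{equation*}
\sin(\de_\star'-\te_\star)\sin(\te_\star'-\de_\star)-\sin(\te_\star-\de_\star)\sin(\de_\star'-\te_\star')=\sin(\te_\star'-\te_\star)\sin(\de_\star'-\de_\star)
\end{equation*}
that you isolate. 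So the underlying computation is the same, but your packaging is genuinely different and better motivated: conjugating by the coordinate swap decouples the $4\times4$ symplectic condition into two independent $2\times2$ conditions (one per endpoint), and the observation $M^TJM=\det(M)\,J$ explains structurally why everything collapses to a single determinant identity rather than four block equations. What this buys is a proof that is short, checkable, and makes the ``tediousness'' disappear; what the paper's route buys is that it verifies \eqref{4.15} in the exact form later quoted, with no intermediate conjugation to track. Either is acceptable; yours is arguably the cleaner argument.
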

\begin{proof}
Since according to \eqref{2.14} $S_{\alpha,\beta}$ are $2 \times 2$ diagonal 
matrices, an entirely straightforward (though, admittedly, rather tedious) explicit computation 
shows that the block matrix entries of $A_{\te, \de}$ in \eqref{4.22} satisfy the 
relations in \eqref{4.15}. 
\end{proof}

We denote by $\bbC_+$  the open complex upper half-plane and abbreviate
$\Im(L)=(L - L^*)/(2i)$ for $L \in\bbC^{n\times n}$, $n\in\bbN$. In addition,
$d \|\Sigma\|_{\bbC^2}$ will denote the total variation of the 
$2\times 2$ matrix-valued measure $d \Sigma$ below in \eqref{5.2}. 

We recall that $M(\cdot)$ is called an $n \times n$ matrix-valued Herglotz function 
if it is analytic on $\bbC_+$ and $\Im(M(z)) \geq 0$ for all $z\in\bbC_+$. In this 
context we also recall the following result:

\begin{lemma} \lb{l4.5}
Assume that $A=\big[A_{j,k}\big]_{1\leq j,k\leq 2}\in \cA_4$ 
and $L \in \bbC^{2 \times 2}$. Then 
\begin{equation}
\Im(L) > 0 \, \text{ implies } \, \ker (A_{1,1} + A_{1,2} L)=\{0\}     \lb{4.24}
\end{equation}
and $M_A(L)=(A_{2,1}+A_{2,2} L)(A_{1,1}+A_{1,2} L)^{-1}$ $($defined according to 
\eqref{4.1}$)$ satisfies
\begin{equation}
\Im(M_A) = \big((A_{2,1} + A_{2,2} L)^{-1}\big)^* \Im(L)    \lb{4.25}
(A_{2,1} + A_{2,2} L)^{-1} > 0. 
\end{equation}
In particular, if $M(\cdot)$ is a $2 \times 2$ matrix-valued Herglotz function satisfying 
\begin{equation}
\Im(M(z)) > 0, \quad z\in\bbC_+,    \lb{4.26} 
\end{equation}
then $M_A(\cdot)$ $($in obvious notation defined according to \eqref{4.1} with $L$ replaced by $M(\cdot)$$)$ is a a $2 \times 2$ matrix-valued Herglotz function satisfying 
\begin{equation}
\Im(M_A(z)) > 0, \quad z\in\bbC_+.    \lb{4.27}
\end{equation}  
\end{lemma}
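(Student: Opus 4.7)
The proof hinges on the algebraic identity
\begin{equation} \label{plan:KSidentity}
(A_{1,1}+A_{1,2}L)^{*}(A_{2,1}+A_{2,2}L) - (A_{2,1}+A_{2,2}L)^{*}(A_{1,1}+A_{1,2}L) = 2i\,\Im(L),
\end{equation}
valid for every $A\in\cA_4$ and every $L\in\bbC^{2\times 2}$. My plan is to verify \eqref{plan:KSidentity} first, and then to deduce the three assertions \eqref{4.24}, \eqref{4.25}, \eqref{4.27} in that order. To establish \eqref{plan:KSidentity}, I would expand both products on the left-hand side: the $L$-free contributions cancel because of the Hermiticity of $A_{1,1}^{*}A_{2,1}$ (first identity in \eqref{4.15}); the terms quadratic in $L$ and $L^{*}$ cancel because of the Hermiticity of $A_{1,2}^{*}A_{2,2}$ (second identity in \eqref{4.15}); and the surviving linear contributions collapse, via $A_{1,1}^{*}A_{2,2}-A_{2,1}^{*}A_{1,2}=I_2$ (third identity in \eqref{4.15}) and its adjoint, to $L - L^{*} = 2i\,\Im(L)$.

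Abbreviating $U := A_{1,1}+A_{1,2}L$ and $V := A_{2,1}+A_{2,2}L$, the kernel claim \eqref{4.24} is immediate: if $Uv=0$ for some $v\in\bbC^2$, sandwiching \eqref{plan:KSidentity} between $v^{*}$ and $v$ annihilates both terms on the left and leaves $0 = 2i\,v^{*}\Im(L)v$, whence $v=0$ by the hypothesis $\Im(L)>0$. In particular $U$ is invertible, so $M_A(L) = VU^{-1}$ is well-defined in the sense of \eqref{4.1}. To obtain \eqref{4.25}, I would write
\[
M_A(L) - M_A(L)^{*} = VU^{-1} - (U^{*})^{-1}V^{*},
\]
multiply on the left by $U^{*}$ and on the right by $U$, and invoke \eqref{plan:KSidentity} to get
\[
U^{*}\bigl(M_A(L)-M_A(L)^{*}\bigr)U = U^{*}V - V^{*}U = 2i\,\Im(L),
\]
so that $\Im(M_A(L)) = (U^{-1})^{*}\,\Im(L)\,U^{-1}$. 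This is the content of \eqref{4.25} once the factors $(A_{2,1}+A_{2,2}L)^{-1}$ displayed there are read as $(A_{1,1}+A_{1,2}L)^{-1}$ (the former being inconsistent already with the check $A=I_{4}$ and appearing to be a misprint). Positivity $\Im(M_A(L))>0$ is then immediate from $\Im(L)>0$ together with the invertibility of $U$.

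Finally, the Herglotz assertion follows by applying the preceding two items pointwise in $z$. Given a $2\times 2$ Herglotz function $M(\cdot)$ with $\Im(M(z))>0$ on $\bbC_+$, the kernel result ensures that $A_{1,1}+A_{1,2}M(z)$ is invertible for every $z\in\bbC_+$, so $M_A(M(z)) = (A_{2,1}+A_{2,2}M(z))(A_{1,1}+A_{1,2}M(z))^{-1}$ defines a matrix-valued function on $\bbC_+$; this function is analytic since each entry is a rational expression in entrywise-analytic functions with nonvanishing determinant. Applying \eqref{4.25} with $L$ replaced by $M(z)$ then yields $\Im(M_A(M(z)))>0$ for every $z\in\bbC_+$, proving \eqref{4.27}. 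I expect the only nontrivial step to be the bookkeeping involved in the expansion that establishes \eqref{plan:KSidentity}; everything downstream is algebraic routine.
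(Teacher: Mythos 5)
Your proof is correct and is essentially the argument behind the result the paper invokes: the paper supplies no proof of its own here, only the citation to \cite[Theorem~6.4]{GT00}, whose proof is exactly your computation based on the $J$-unitarity relations \eqref{4.15}, namely the identity $U^*V-V^*U=2i\,\Im(L)$ with $U=A_{1,1}+A_{1,2}L$ and $V=A_{2,1}+A_{2,2}L$, followed by the kernel argument and the conjugation $\Im(M_A(L))=\big(U^{-1}\big)^*\Im(L)\,U^{-1}$. You are also right that \eqref{4.25} as printed is a misprint and that both factors should involve $(A_{1,1}+A_{1,2}L)^{-1}$ rather than $(A_{2,1}+A_{2,2}L)^{-1}$, as your check with $A=I_4$ shows and as the cited theorem states.
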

\begin{proof}
This is the special finite-dimensional case of \cite[Theorem\ 6.4]{GT00}. 
\end{proof}

Now we are in position to prove the fundamental Herglotz property of the matrix 
$\Lates (\cdot) S_{\te_0'-\te_0,\te_R'-\te_R}$ in the case where $\Hte$ is self-adjoint.
 
\begin{theorem} \lb{t4.6}
Let $\te_0, \te_R, \te_0', \te_R' \in [0,2 \pi)$, 
$\te_0^\prime-\te_0\ne 0 \, \text{\rm mod} (\pi)$, 
$\te_R^\prime-\te_R \ne 0 \, \text{\rm mod} (\pi)$, 
$z\in\bbC\backslash\sigma(\Hte)$, and $\Hte$
be defined as in \eqref{2.2a}. In addition, suppose that $V$ is real-valued $($and 
hence $\Hte$ is self-adjoint\,$)$. Then $\Lates (\cdot) S_{\te_0'-\te_0,\te_R'-\te_R}$ 
is a $2\times 2$ matrix-valued Herglotz function admitting the representation
\begin{align}
& \Lates (z) S_{\te_0'-\te_0,\te_R'-\te_R}=\Xi_{\te_0,\te_R}^{\te_0', \te_R'} 
+ \ \int_{{\mathbb{R}}}
d\Sigma_{\te_0,\te_R}^{\te_0', \te_R'} (\lambda)\bigg(\frac{1}{\lambda -z}-\frac{\lambda}
{1+\lambda^2}\bigg),     \lb{5.1} \\
& \hspace*{7.75cm} z\in\bbC\backslash\sigma(\Hte),   \no \\
& \Xi_{\te_0,\te_R}^{\te_0', \te_R'} 
= \Big(\Xi_{\te_0,\te_R}^{\te_0', \te_R'}\Big)^* \in\bbC^{2\times 2},
\quad \int_{\bbR} \f{d\big\|\Sigma_{\te_0,\te_R}^{\te_0', \te_R'} 
(\lambda)\big\|_{\bbC^2}}{1+\lambda^2} <\infty,
\lb{5.2}
\end{align}
where
\begin{align}
\begin{split}
\Sigma_{\te_0,\te_R}^{\te_0', \te_R'} ((\lambda_1,\lambda_2]) =
\f{1}{\pi}\lim_{\delta\downarrow 0}\lim_{\varepsilon\downarrow
0}\int^{\lambda_2+\delta}_{\lambda_1+\delta}d\lambda \,
\Im\Big(\Lates (\lambda +i\varepsilon)\Big),&  \\
 \lambda_1, \lambda_2
\in\bbR, \; \lambda_1<\lambda_2.&  \lb{5.3}
\end{split}
\end{align} 
In addition,
\begin{equation}
\Im\Big(\Lates (z) S_{\te_0'-\te_0,\te_R'-\te_R}\Big) > 0, \quad z\in\bbC_+.    \lb{5.3a}
\end{equation} 
\end{theorem}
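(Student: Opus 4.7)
The strategy is to verify analyticity, establish strict positivity of the imaginary part of $\Lates(\cdot)\,S_{\te_0'-\te_0,\te_R'-\te_R}$ on $\bbC_+$ by combining a tractable base case with the linear fractional transformation of Corollary \ref{c4.3}, and then appeal to the classical matrix-valued Herglotz--Nevanlinna representation theorem.

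Since $V$ is real-valued and $\te_0,\te_R\in[0,2\pi)$, the sesquilinear form \eqref{2.2f} is symmetric and $\Hte$ is self-adjoint, so $\sigma(\Hte)\subset\bbR$ and $\Lates(\cdot)\,S_{\te_0'-\te_0,\te_R'-\te_R}$ is analytic on $\bbC_+$ by Theorem \ref{t2.3}. I would next focus on the base case $(\de_0,\de_R,\de_0',\de_R')=(0,0,\pi/2,\pi/2)$, for which $S_{\de_0'-\de_0,\de_R'-\de_R}=I_2$ and hence $\Lades(z)\,S_{\de_0'-\de_0,\de_R'-\de_R}=\Lazz(z)$. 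For this choice, Theorem \ref{t3.5} in its self-adjoint specialization reads $\Lazz(z)=\gamma_{\pi/2,\pi/2}\,T(\ol z)^*$, where $T(z):=\gamma_{\pi/2,\pi/2}(H_{0,0}-zI)^{-1}\colon L^2((0,R);dx)\to\bbC^2$ is bounded. The second resolvent identity produces $T(\ol z)^*-T(z)^*=2i\,\Im(z)\,(H_{0,0}-zI)^{-1}T(z)^*$, and applying $\gamma_{\pi/2,\pi/2}$ on the left yields
\begin{equation*}
\Lazz(z)-\Lazz(\ol z)=2i\,\Im(z)\,T(z)T(z)^*.
\end{equation*}
Complementing this, an entrywise inspection of \eqref{3.47}--\eqref{3.50}, combined with the reality $\ol{u_{\pm,\te}(z,\cdot)}=u_{\pm,\te}(\ol z,\cdot)$ valid for real $V$ and real $\te$, shows that $\Lazz(z)$ is a symmetric matrix satisfying $\ol{\Lazz(z)}=\Lazz(\ol z)$, whence $\Lazz(z)^*=\Lazz(\ol z)$. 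Combining these facts gives $\Im(\Lazz(z))=\Im(z)\,T(z)T(z)^*$; strict positivity on $\bbC_+$ then follows from surjectivity of $T(z)$ onto $\bbC^2$, which in turn is immediate from the explicit Green's function formula \eqref{3.32} together with the linear independence in $L^2((0,R);dx)$ of the fundamental solutions $u_{\pm,0}(z,\cdot)$.

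With the base case in hand, Lemma \ref{l4.4} (whose non-degeneracy hypotheses are trivially satisfied here) yields $A(\te,\de)\in\cA_4$, and Corollary \ref{c4.3} specializes to $\Lates(z)\,S_{\te_0'-\te_0,\te_R'-\te_R}=M_{A(\te,\de)}(\Lazz(z))$. Lemma \ref{l4.5} then transports strict positivity from $\Im(\Lazz(z))$ to $\Im(\Lates(z)\,S_{\te_0'-\te_0,\te_R'-\te_R})$, giving \eqref{5.3a} and the Herglotz property on $\bbC_+$. The representation \eqref{5.1}--\eqref{5.3} is then the classical matrix-valued Herglotz--Nevanlinna representation theorem (see, e.g., \cite{GT00}), with $\Sigma_{\te_0,\te_R}^{\te_0',\te_R'}$ recovered from $\Im(\Lates(\cdot)\,S_{\te_0'-\te_0,\te_R'-\te_R})$ via Stieltjes inversion. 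The principal obstacle I anticipate is the symmetry identity $\Lazz(z)^*=\Lazz(\ol z)$---specifically the symmetry of the off-diagonal entries of $\Lazz(z)$, which demands the entrywise calculation based on \eqref{3.47}--\eqref{3.50}---since the remainder of the argument is driven by Theorem \ref{t3.5}, the resolvent identity, and the fractional linear transformation machinery already assembled in this section.
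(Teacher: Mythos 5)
Your architecture coincides with the paper's: establish strict positivity of the imaginary part for one distinguished boundary data map, then transport it to general $(\te_0,\te_R,\te_0',\te_R')$ via the linear fractional transformation of Corollary \ref{c4.3} with $(\de_0,\de_R,\de_0',\de_R')=(0,0,\pi/2,\pi/2)$, Lemma \ref{l4.4}, and Lemma \ref{l4.5} --- this is literally the paper's choice \eqref{4.62}. Where you genuinely differ is the base case. The paper first treats $\Lambda_{\pi/2,\pi/2}$ and, in order to make sense of $\gamma_{0,0}^*$ (which cannot exist as a densely defined operator on $L^2$, cf.\ Remark \ref{r3.7}), builds the $H^1((0,R))\hookrightarrow L^2 \hookrightarrow H^1((0,R))^*$ duality apparatus ($\wti\gamma_{0,0}$, $\wti{H_{\pi/2,\pi/2}}$, \eqref{5.5}--\eqref{5.20}), and only afterwards passes to $\Lambda_{0,0}=-[\Lambda_{\pi/2,\pi/2}]^{-1}$ in \eqref{4.60}. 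You instead work directly with the bounded operator $T(z)=\gamma_{\pi/2,\pi/2}(H_{0,0}-zI)^{-1}\in\cB\big(L^2((0,R);dx),\bbC^2\big)$, whose adjoint is an honest bounded operator $\bbC^2\to L^2$ with range spanned by $u_{\pm,0}(z,\cdot)$ (cf.\ \eqref{3.43}); the second resolvent identity together with the symmetry $\Lambda_{0,0}(z)^*=\Lambda_{0,0}(\ol z)$ (which does require the entrywise verification you describe, using $\ol{u_{\pm}(z,\cdot)}=u_{\pm}(\ol z,\cdot)$ and the equality of the off-diagonal entries from \eqref{3.47}) then gives $\Im(\Lambda_{0,0}(z))=\Im(z)\,T(z)T(z)^*$ with no duality pairings. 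This is a legitimate and leaner route; the injectivity of $T(z)^*$ needed for strict positivity is proved exactly as the paper proves \eqref{5.22}, via linear independence of $u_{\pm,0}(z,\cdot)$ and non-vanishing of $u_{-,0}'(z,0)$, $u_{+,0}'(z,R)$.

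The one genuine omission is at the very end. The classical matrix-valued Herglotz representation theorem you invoke yields \eqref{5.26}, which contains an additional linear term $\Upsilon_{\te_0,\te_R}^{\te_0',\te_R'}\,z$ with $0\le\Upsilon_{\te_0,\te_R}^{\te_0',\te_R'}\in\bbC^{2\times 2}$, whereas \eqref{5.1} as stated has none; your proposal never rules this term out. The paper does so by noting from \eqref{4.3} that $\Lates(z)=C_{\te_0'-\te_0,\te_R'-\te_R}+S_{\te_0'-\te_0,\te_R'-\te_R}\Late(z)$ and then invoking the large-$|z|$ asymptotics of Lemma \ref{l3.4}, which give $\Lates(z)=\Oh(|z|^{1/2})$ and hence force $\Upsilon_{\te_0,\te_R}^{\te_0',\te_R'}=0$. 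Add that step and your argument is complete.
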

\begin{proof}
Without loss of generality, we take $z\in\bbC_+$. An analytic continuation with
respect to $z$ then extends the result \eqref{5.1} to
$z\in\bbC\backslash\sigma(\Hte)$.

We will first prove the Herglotz property for $\Lambda_{\pi/2,\pi/2}$ and then use
a special case of the linear fractional transformation \eqref{4.11a} to conclude that
$\Lates(\cdot) S_{\te_0'-\te_0,\te_R'-\te_R}$ is a $2 \times 2$ matrix-valued Herglotz function for any $\te_0, \te_R, \te_0', \te_R' \in [0,2 \pi)$. Our point of departure is
formula \eqref{3.33a} in the special case $\te_0 = \te_R = \pi/2$, that is,
\begin{equation}
\Lambda_{\pi/2,\pi/2} (z) = \gamma_{0,0}
\big[\gamma_{0,0} (H_{\pi/2,\pi/2}- \ol{z})^{-1}]^*,
\quad z \in \bbC_+,    \lb{5.4}
\end{equation}
noticing that
\begin{equation}
\hatt \gamma_{\pi/2,\pi/2} = \gamma_{\pi,\pi} = - \gamma_{0,0}
\end{equation}
by \eqref{2.2A}.

First, we slightly change the definition of $\gamma_{0,0}$ by introducing 
\begin{equation} \label{5.5}
\wti \gamma_{0,0} \colon \begin{cases}
H^1((0,R)) \rightarrow \bbC^2, \\
u \mapsto \begin{bmatrix} u(0) \\ u(R)  \end{bmatrix}, \end{cases}
\quad  \wti \gamma_{0,0} \in \cB\big(H^1((0,R)), \bbC^2\big),
\end{equation}
instead. One notes that $\wti \gamma_{0,0}$ is well-defined in the following sense: Any
$u \in H^1((0,R))$ has a representative in its equivalence class of Lebesgue  measurable and square integrable elements, again denoted by $u$ for simplicity, 
that is absolutely continuous on $[0,R]$. In fact, by a standard Sobolev embedding result, one has $H^1(0,R))\hookrightarrow C^{1/2}((0,R)) = C^{1/2}([0,R])$. In particular, the limits $\lim_{x\downarrow 0} u(x) = u(0)$ and $\lim_{x\uparrow R} u(x) = u(R)$ are well-defined for this representative $u$.

Next, using
\begin{equation}
H^1((0,R)) \hookrightarrow L^2((0,R); dx)) \hookrightarrow H^1((0,R))^*,    \lb{5.6}
\end{equation}
one infers that (cf.\ also Remark \ref{r3.7})
\begin{equation} \label{5.7}
(\wti \gamma_{0,0})^* \colon \begin{cases}
\bbC^2 \rightarrow H^1((0,R))^*, \\
 \begin{bmatrix} v_1 \\ v_2 \end{bmatrix} \mapsto
v_1 \delta_0 + v_2 \delta_R, \end{cases}  \quad
(\wti \gamma_{0,0})^* \in \cB\big(\bbC^2, H^1((0,R))^*\big),
\end{equation}
where (cf.\ also \cite[Example\ 2]{KS95})
\begin{align}
& \delta_0, \delta_R \in H^1((0,R))^*,   \no \\
& \delta_0 (u)
= {}_{H^1((0,R))}\big\langle \ol u, \delta_0 \big\rangle_{H^1((0,R))^*}
= u(0),    \lb{5.8} \\
& \delta_R (u)
= {}_{H^1((0,R))}\big\langle \ol u, \delta_R \big\rangle_{H^1((0,R))^*}
= u(R), \quad u \in H^1((0,R)),   \no
\end{align}
with ${}_{H^1((0,R))}\langle \, \cdot \, , \cdot \, \rangle_{H^1((0,R))^*}$
denoting the duality
pairing between $H^1((0,R))$ and $H^1((0,R))^*$ (linear in the second 
argument and antilinear in the first, see, e.g., \cite[Sect.\ 2]{GM09a} for 
more details). Indeed, \eqref{5.8}  follows from
\begin{align}
& u(0) v_1 + u(R) v_2 = (\wti \gamma_{0,0} \ol u,v)_{\bbC^2} =
{}_{H^1((0,R))}\big\langle \ol u, (\wti \gamma_{0,0})^* v \big\rangle_{H^1((0,R))^*}, \\
& \hspace*{4.8cm}
v = \begin{bmatrix} v_1 \\ v_2 \end{bmatrix} \in \bbC^2,
\quad u \in H^1((0,R)).   \lb{5.9}
\end{align}

Next, still following the material discussed in \cite[Sect.\ 2]{GM09a}, we extend
the operator $H_{\pi/2,\pi/2}$ in $L^2((0,R); dx)$,
\begin{equation}
(H_{\pi/2,\pi/2} -z I ) \colon \dom\big(H_{\pi/2,\pi/2}\big) \to L^2((0,R); dx),
\quad z \in z\in\bbC_+,     \lb{5.10}
\end{equation}
where $\dom\big(H_{\pi/2,\pi/2}\big) \hookrightarrow L^2((0,R); dx)$, to its extension
$\wti {H_{\pi/2,\pi/2}}$, which maps $H^1((0,R))$ boundedly into $H^1((0,R))^*$,
\begin{equation}
\wti {H_{\pi/2,\pi/2}} \in \cB\big(H^1((0,R)), H^1((0,R))^*\big),   \lb{5.11}
\end{equation}
such that (with $\wti I: H^1((0,R)) \hookrightarrow H^1((0,R))^*$ the continuous embedding operator)
\begin{align}
& \big(\wti {H_{\pi/2,\pi/2}}+\wti I\,\big)\in\cB\big(H^1((0,R)), H^1((0,R))^*\big) \\
& \, \text{and }\big(\wti {H_{\pi/2,\pi/2}} + \wti I\,\big) \colon H^1((0,R)) \to H^1((0,R))^* \,
\text{ is unitary.}   \lb{B.37}
\end{align}
In addition (cf.\ \eqref{2.2f}), 
\begin{align}
& (H_{\pi/2,\pi/2}+I)^{1/2}\in\cB\big(H^1((0,R)), L^2((0,R);dx)\big) \\
& \, \text{and }
(H_{\pi/2,\pi/2} + I)^{1/2} \colon H^1((0,R)) \to L^2((0,R);dx) \, \text{ is unitary}
\lb{B.40}
\end{align}
(cf.\ \cite[Sect.\ 2]{GM09a}). Moreover, $\wti {H_{\pi/2,\pi/2}}$ is self-adjoint,
\begin{equation}
\Big(\wti {H_{\pi/2,\pi/2}}\Big)^* = \wti {H_{\pi/2,\pi/2}},      \lb{5.12}
\end{equation}
in the sense that
\begin{align}
\begin{split}
{}_{H^1((0,R))}\big\langle w_1, \wti {H_{\pi/2,\pi/2}} w_2 \big\rangle_{H^1((0,R))^*}
= \ol{{}_{H^1((0,R))}\big\langle w_2, 
\wti {H_{\pi/2,\pi/2}} w_1 \big\rangle_{H^1((0,R))^*}}&, \\
\quad w_1, w_2 \in H^1((0,R))&    \lb{5.12a}
\end{split}
\end{align}
(again we refer to \cite[Sect.\ 2]{GM09a} for more details).

In addition,
\begin{equation}
\Big(\wti {H_{\pi/2,\pi/2}} -z \wti I \Big)^{-1} \colon H^1((0,R))^* \to H^1((0,R)),
\quad z \in\bbC_+,    \lb{5.13}
\end{equation}
and hence,
\begin{align}
\begin{split}
\Big(\Big(\wti {H_{\pi/2,\pi/2}} -z \wti I \Big)^{-1} w\Big)(x)
 = {}_{H^1((0,R))}\big\langle \ol{G_{\pi/2,\pi/2} (z,x,\cdot)},
w \big\rangle_{H^1((0,R))^*},&    \\
w \in H^1((0,R))^*, \; z \in\bbC_+,&     \lb{5.14}
\end{split}
\end{align}
using the fact that
\begin{equation}
G_{\pi/2,\pi/2} (z,x,\cdot) \in H^1((0,R)), \quad x \in\bbR, \;
z\in\bbC\backslash\sigma(H_{\pi/2,\pi/2}).
\end{equation}

By \eqref{2.5a} and \eqref{2.5b}, the Wronskian $W$ is of the form
\begin{align}
\begin{split}
W(z) & = W(u_{+,\pi/2}(z,\cdot), u_{-,\pi/2}(z,\cdot)) \\
& = - u'_{+,\pi/2}(z,0) u_{-,\pi/2}(z,0) = u_{+,\pi/2}(z,R) u'_{-,\pi/2}(z,R).   \lb{5.15}
\end{split}
\end{align}
Using \eqref{3.32}, one computes
\begin{align}
& \Big(\Big(\wti {H_{\pi/2,\pi/2}} -z \wti I \Big)^{-1} (\wti \gamma_{0,0})^* v \Big)(x)   \no \\
& \quad = \f{1}{W(z)}
{}_{H^1((0,R))}\big\langle \ol{G_{\pi/2,\pi/2} (z,x,\cdot)},
[v_1 \delta_0 + v_2 \delta_R] \big\rangle_{H^1((0,R))^*}    \no \\
& \quad = \f{1}{W(z)} \big(
u_{-,\pi/2}(z,x) u_{+,\pi/2} (z,R) v_2 +
u_{+,\pi/2}(z,x) u_{-,\pi/2} (z,0) v_1 \big),    \lb{5.16} \\
& \hspace*{7.6cm}  v = [v_1 \;\, v_2]^\top \in \bbC^2,  \no
\end{align}
and hence
\begin{align}
& \wti \gamma_{0,0} \Big(\wti {H_{\pi/2,\pi/2}}
- z \wti I \Big)^{-1} (\wti \gamma_{0,0})^* v  \no \\
& \quad = \f{1}{W(z)}\begin{bmatrix} u_{-,\pi/2}(z,0) u_{+,\pi/2}(z,R) v_2
+ u_{-,\pi/2}(z,0) v_1 \\[1mm]
u_{+,\pi/2}(z,R) v_2 + u_{+,\pi/2}(z,R) u_{-,\pi/2}(z,0) v_1
\end{bmatrix}  \no \\[1mm]
& \quad = \f{1}{W(z)} \begin{bmatrix} u_{-,\pi/2}(z,0)
& u_{-,\pi/2}(z,0) u_{+,\pi/2}(z,R)  \\[1mm]
u_{-,\pi/2}(z,0) u_{+,\pi/2}(z,R) & u_{+,\pi/2}(z,R)
\end{bmatrix} \begin{bmatrix} v_1 \\[1mm] v_2 \end{bmatrix}  \no \\[1mm]
& \quad = \Lambda_{\pi/2,\pi/2} (z) v,  \quad
v = [v_1 \;\, v_2]^\top \in \bbC^2,    \lb{5.17}
\end{align}
inserting \eqref{5.15} for $W(\cdot)$. Consequently, one has
\begin{equation}
\Lambda_{\pi/2,\pi/2} (z) =
\wti \gamma_{0,0} \Big(\wti {H_{\pi/2,\pi/2}} - z \wti I \Big)^{-1} (\wti \gamma_{0,0})^*,
\quad z\in\bbC_+,    \lb{5.18}
\end{equation}
and also
\begin{equation}
\Lambda_{\pi/2,\pi/2} (z)^*
= \Lambda_{\pi/2,\pi/2} (\ol z), \quad z\in\bbC_+.    \lb{5.19}
\end{equation}
In particular,
\begin{align}
& \big(v,\Im \big(\Lambda_{\pi/2,\pi/2} (z)\big) v\big)_{\bbC^2}
= \f{1}{2 i} \big(v,
\big(\Lambda_{\pi/2,\pi/2} (z) - \Lambda_{\pi/2,\pi/2} (z)^*\big) v\big)_{\bbC^2}   \no \\
& \quad = \f{1}{2 i} \Big(v,
\wti \gamma_{0,0} \Big(\Big(\wti {H_{\pi/2,\pi/2}} - z \wti I \Big)^{-1}
- \Big(\wti {H_{\pi/2,\pi/2}} -\ol{z} \wti I \Big)^{-1}\Big) (\wti \gamma_{0,0})^*
v\Big)_{\bbC^2}    \no \\
& \quad = \Im(z) \Big(v,
\wti \gamma_{0,0} \Big[\Big(\wti {H_{\pi/2,\pi/2}} - z \wti I \Big)^{-1}
\wti I 
 \Big(\wti {H_{\pi/2,\pi/2}} -\ol{z} \wti I \Big)^{-1}\Big] (\wti \gamma_{0,0})^*
 v\Big)_{\bbC^2}     \no \\
& \quad = \Im(z) \Big(v,
\Big[\wti \gamma_{0,0} \Big(\wti {H_{\pi/2,\pi/2}} - z \wti I \Big)^{-1}\Big]
\wti I \Big[\wti \gamma_{0,0} 
\Big(\wti {H_{\pi/2,\pi/2}} - z \wti I \Big)^{-1}\Big]^* v\Big)_{\bbC^2}    \no \\
& \quad = \Im(z)\, {}_{H^1((0,R))}\Big\langle \Big[\wti \gamma_{0,0} 
\Big(\wti {H_{\pi/2,\pi/2}} - z \wti I \Big)^{-1}\Big]^* v,\wti I    \no \\
& \hspace*{3.2cm}  \times 
\Big[\wti \gamma_{0,0} \Big(\wti {H_{\pi/2,\pi/2}} - z \wti I \Big)^{-1}\Big]^*
v\Big\rangle_{H^1((0,R))^*}   \no \\
& \quad = \Im(z) \big(\big[\gamma_{0,0} ({H_{\pi/2,\pi/2}} - z I)^{-1}\big]^* v, 
\big[\gamma_{0,0} ({H_{\pi/2,\pi/2}} - z I)^{-1}\big]^*v\big)_{L^2((0,R); dx)}   \no \\ 
& \quad = \Im(z) 
\big\|\big[\gamma_{0,0} ({H_{\pi/2,\pi/2}} - z I)^{-1}\big]^*v\big\|_{L^2((0,R); dx)}^2  
\geq 0, \quad  v \in \bbC^2, \; z\in\bbC_+,      \lb{5.20}
\end{align}
since the duality pairing 
${}_{H^1((0,R))}\langle \, \cdot \, , \cdot \, \rangle_{H^1((0,R))^*}$ 
between the spaces $H^1((0,R))$ and $H^1((0,R))^*$ is compatible with the scalar 
product in $L^2((0,R); dx)$, that is,
\begin{equation}
{}_{H^1((0,R))}\langle w_1, \wti I w_2 \rangle_{H^1((0,R))^*} 
= (w_1, w_2)_{L^2((0,R); dx)},  \quad w_1, w_2 \in H^1((0,R)). 
\end{equation}
In particular,
\begin{align}
\begin{split} 
\Im \big(\Lambda_{\pi/2,\pi/2} (z)\big)  
= \big[\gamma_{0,0} ({H_{\pi/2,\pi/2}} - z I)^{-1}\big]
\big[\gamma_{0,0} ({H_{\pi/2,\pi/2}} - z I)^{-1}\big]^* \geq 0,&   \lb{5.20a} \\
z \in \bbC_+,&  
\end{split} 
\end{align}
and thus, $\Lambda_{\pi/2,\pi/2} (\cdot)$ is a $2 \times 2$ matrix-valued Herglotz 
function. In fact, one can improve on \eqref{5.20a} to obtain  
\begin{equation}
\Im \big(\Lambda_{\pi/2,\pi/2} (z)\big)  > 0,  \quad  z\in\bbC_+,  \lb{5.21}
\end{equation}
since 
\begin{equation}
\ker \big(\big[\gamma_{0,0} (H_{\pi/2,\pi/2} - z I)^{-1}\big]^*\big) 
= \{0\},  \quad  z\in\bbC\backslash\sigma(H_{\pi/2,\pi/2}).    \lb{5.22} 
\end{equation}

To prove \eqref{5.22}, one can argue as follows: Suppose that 
\begin{equation} 
[a_0 \;\, a_R]^\top \in 
\ker \big(\big[\gamma_{0,0} (H_{\pi/2,\pi/2} - z I)^{-1}\big]^*\big),  
\end{equation}
then by \eqref{3.43}, 
\begin{align}
& \big(\big[\gamma_{0,0} (H_{\pi/2,\pi/2} - z I)^{-1}\big]^* [a_0 \;\, a_R]^\top\big)(x)
= \f{1}{W(u_{+,\pi/2}(\ol z,\cdot), u_{-,\pi/2}(\ol z,\cdot))}  \no \\
& \qquad \times 
\big[a_0 u_{-,\pi/2}(\ol z,0) u_{+,\pi/2}(\ol z,x) + a_R u_{+,\pi/2}(\ol z,R) 
u_{-,\pi/2}(\ol z,x) \big] =0,    \\
& \hspace*{7.65cm} z \in\bbC\backslash\sigma(H_{\pi/2,\pi/2}).   \no 
\end{align}
Since by definition (cf.\ \eqref{2.5a}, \eqref{2.5b}), $u_{-,\pi/2}' (\ol z,0) =  
u_{+,\pi/2}' (\ol z,R) = 0$, one concludes that 
\begin{equation}
u_{-,\pi/2} (\ol z,0) \neq 0, \quad u_{+,\pi/2} (\ol z,R) \neq 0.  
\end{equation}
Moreover, since $W(u_{+,\pi/2}(\ol z,\cdot), u_{-,\pi/2}(\ol z,\cdot)) \neq 0$ for all  
$z \in\bbC\backslash\sigma(H_{\pi/2,\pi/2})$ (otherwise, $\ol z$ would be an 
eigenvalue of $H_{\pi/2,\pi/2}$), $u_{+,\pi/2}(\ol z,\cdot)$ and 
$u_{-,\pi/2}(\ol z,\cdot))$ are linearly independent, implying 
\begin{equation}
a_0 = a_R = 0,
\end{equation}
and hence \eqref{5.22}.

Next, using the notation introduced in \eqref{3.0}, and applying the linear fractional
transformation \eqref{4.3} one can show that (with $z\in\bbC_+$)
\begin{equation}
\Lambda_{\pi/2,\pi/2} (z) \big[C_{\pi/2,\pi/2} + S_{\pi/2,\pi/2} \Lambda_{0,0} (z)\big]
= \big[C_{\pi,\pi} + S_{\pi,\pi} \Lambda_{0,0} (z)\big],
\end{equation}
or equivalently, that
\begin{equation}
\Lambda_{\pi/2,\pi/2} (z) \Lambda_{0,0} (z) = - I_2,
\end{equation}
and hence,
\begin{equation}
\Lambda_{0,0}^{\pi/2,\pi/2} (z) = \Lambda_{0,0} (z) 
= -\big [\Lambda_{\pi/2,\pi/2} (z)\big]^{-1}
= - \big[\Lambda_{\pi/2,\pi/2}^{\pi,\pi} (z)\big]^{-1},   \lb{4.60}
\end{equation}
is a $2 \times 2$ matrix-valued Herglotz function too, satisfying
\begin{equation}
 \Im \big(\Lambda_{0,0}^{\pi/2,\pi/2} (z)\big) > 0, \quad z\in\bbC_+.    \lb{4.61}
\end{equation} 

The Herglotz property of $\Lates (\cdot) S_{\te_0'-\te_0,\te_R'-\te_R}$, 
$\te_0,\te_R, \te_0', \te_R' \in [0,2 \pi)$ 
then follows again from the linear fractional transformation \eqref{4.11a} and 
from \eqref{4.27} in Lemma \ref{l4.5} upon identifying the $2\times 2$ block matrix
$A = \big[A_{j,k}\big]_{1\leq j, k \leq 2}$ in Lemma \ref{l4.5} with 
$A(\te,\de) \in \cA_4$ in the special case where  
\begin{align}
\begin{split} 
& \te_0, \te_R, \te_0', \te_R' \in [0,2 \pi), \quad 
\te_0^\prime-\te_0\ne 0 \, \text{\rm mod} (\pi),  \quad  
\te_R^\prime-\te_R \ne 0 \, \text{\rm mod} (\pi),     \\ 
& \de_0 = \de_R =0, \quad \de_0' = \de_R' = \pi/2.  \lb{4.62} 
\end{split} 
\end{align}
Given the Herglotz property of $\Lates (\cdot) S_{\te_0'-\te_0,\te_R'-\te_R}$, one 
obtains as in \cite[Theorem\ 5.4]{GT00} the representation
\begin{align}
& \Lates (z) S_{\te_0'-\te_0,\te_R'-\te_R} =\Xi_{\te_0,\te_R}^{\te_0', \te_R'} 
+ \Upsilon_{\te_0,\te_R}^{\te_0', \te_R'} z +\ \int_{{\mathbb{R}}}
d\Sigma_{\te_0,\te_R}^{\te_0', \te_R'} 
(\lambda)\bigg(\frac{1}{\lambda -z}-\frac{\lambda}
{1+\lambda^2}\bigg),     \no \\
& \hspace*{8.4cm} z\in\bbC\backslash\sigma(\Hte),   \lb{5.26} 
\end{align}
with $\Xi_{\te_0,\te_R}^{\te_0', \te_R'}$ and 
$\Sigma_{\te_0,\te_R}^{\te_0', \te_R'} (\cdot)$ as in \eqref{5.2}
and \eqref{5.3}, and with $\Upsilon_{\te_0,\te_R}^{\te_0', \te_R'}$ satisfying
\begin{equation}
0 \leq \Upsilon_{\te_0,\te_R}^{\te_0', \te_R'}\in\bbC^{2\times 2}.    \lb{5.27}
\end{equation}
Thus, to conclude the proof of \eqref{5.1}, it remains to prove that actually, 
$\Upsilon_{\te_0,\te_R}^{\te_0', \te_R'} = 0$. 
The latter fact is clear since
\begin{equation}
\Lates (z) \underset{\substack{|z|\to\infty\\ \Im(z^{1/2})>0}}{=} \Oh(|z|^{1/2}),
\end{equation}
using the fact that by \eqref{4.3}, 
\begin{equation}
\Lates (z) = C_{\te_0'-\te_0,\te_R'-\te_R} 
+ S_{\te_0'-\te_0,\te_R'-\te_R} \Late (z),  
\end{equation}
and applying Lemma \ref{l3.4}.

Finally, \eqref{5.3a} is a consequence of 
\eqref{4.61} and Lemma \ref{l4.5} with $A = A(\te,\de)$ chosen as in \eqref{4.62}. 
\end{proof}

As a particular case of Theorem \ref{t4.6} where 
$\te_0' = (\te_0 + (\pi/2)) \, \text{\rm mod} (2\pi)$, 
$\te_R' = (\te_R + (\pi/2)) \, \text{\rm mod} (2\pi)$, one concludes that 
\begin{equation}
\Late(z) = \Lateq (z), \quad \te_0,\te_R\in [0,2 \pi), \; 
z\in\bbC\backslash \sigma(\Hte),    \lb{4.76}
\end{equation}
is a $2 \times 2$ matrix-valued Herglotz function.

\section{Krein-Type Resolvent Formulas}  \label{s6}

Krein-type resolvent formulas have been studied in a great variety of contexts, far too
numerous to account for all in this paper. For instance, they are of fundamental
importance in connection with the spectral and inverse spectral theory of ordinary and
partial differential operators. Abstract versions of Krein-type resolvent formulas (see
also the brief discussion at the end of our introduction), connected to boundary value spaces
(boundary triples) and self-adjoint extensions of closed symmetric operators
with equal (possibly infinite) deficiency spaces, have received enormous
attention in the literature. In particular, we note that Robin-to-Dirichlet
maps in the context of ordinary differential operators reduce to the
celebrated (possibly, matrix-valued) Weyl--Titchmarsh function, the basic
object of spectral analysis in this context.  Since it is impossible to
cover the literature in this paper, we refer the reader to the rather extensive
recent bibliography in \cite{GM08}, \cite{GM09}, and \cite{GM10}. Here we 
just mention, for instance,
\cite[Sect.\ 84]{AG81}, \cite{ABMN05}, \cite{AB09}, \cite{AP04}, \cite{AT03}, 
\cite{AT05},
\cite{BL07}, \cite{BMN08}, \cite{BMN02}, \cite{BGW09},
\cite{BHMNW09}, \cite{BM04}, \cite{BMNW08}, \cite{BGP08},
\cite{DHMS06}, \cite{DM91}, \cite{DM95}, \cite{GKMT01},
\cite{GMT98}, \cite{GMZ07}, \cite{GT00}, \cite[Ch.\ 3]{GG91},
\cite{Gr08a}, \cite[Ch.\ 13]{Gr09},\cite{KO77}, \cite{KO78}, \cite{KS66},
\cite{Ku09}, \cite{KK04}, \cite{LT77}, \cite{MM06},  \cite{Ma04},
\cite{Ne83}, \cite{Pa06}, \cite{Pa87}, \cite{Pa02}, \cite{Pa08}, \cite{Po01}, \cite{Po03}, 
\cite{Po04}, \cite{Po08},
\cite{PR09}, \cite{Ry07}, \cite{Ry09}, \cite{Ry10}, \cite{Sa65}, \cite{St70a},
\cite{TS77}, and the references cited therein.

We start by explicitly computing operators of the type
$\gamma_{\te_0^{\prime},\te_R^{\prime}}(\Hte - z I)^{-1}$ which play a role
at various places in this manuscript (cf.\ Theorem \ref{t3.5}, Lemma \ref{l6.2}, and
Theorem \ref{t6.3}):

\begin{lemma} \lb{l6.1}
Assume that $\te_0, \te_R, \te_0', \te_R' \in S_{2 \pi}$, let $\Hte$ be defined as in
\eqref{2.2a}, and suppose that $z\in\bbC\backslash\sigma(\Hte)$. Then,
assuming $f \in L^2((0,R); dx)$, and writing
\begin{align}
\gamma_{\te_0^{\prime},\te_R^{\prime}}(\Hte - z I)^{-1} f
= \begin{bmatrix}
\big(\gamma_{\te_0^{\prime},\te_R^{\prime}}(\Hte - z I)^{-1}\big)_1f \\[2mm]
\big(\gamma_{\te_0^{\prime},\te_R^{\prime}}(\Hte - z I)^{-1}\big)_2 f
\end{bmatrix} \in \bbC^2,    \lb{6.1}
\end{align}
one has
\begin{align}
\big(\gamma_{\te_0^{\prime},\te_R^{\prime}}(\Hte - z I)^{-1}\big)_1f
&= \f{\sin(\te_0'-\te_0)}{W(u_{+,\te_R}(z,\cdot), u_{-,\te_0}(z,\cdot))}
\big(\ol{u_{+,\te_R}(z,\cdot)}, f)_{L^2((0,R); dx)}   \no \\
& \quad \times \begin{cases} - \f{u_{-,\te_0}(z,0)}{\sin(\te_0)},
& \te_0 \in S_{2 \pi}\backslash\{0,\pi\}, \\
\f{u'_{-,\te_0}(z,0)}{\cos(\te_0)}, & \te_0 \in S_{2 \pi}\backslash\{\pi/2,3\pi/2\},
 \end{cases}   \lb{6.2} \\
\big(\gamma_{\te_0^{\prime},\te_R^{\prime}}(\Hte - z I)^{-1}\big)_2 f
&= \f{\sin(\te_R'-\te_R)}{W(u_{+,\te_R}(z,\cdot), u_{-,\te_0}(z,\cdot))}
\big(\ol{u_{-,\te_0}(z,\cdot)}, f)_{L^2((0,R); dx)}   \no \\
& \quad \times \begin{cases} - \f{u_{+,\te_R}(z,R)}{\sin(\te_R)},
& \te_R \in S_{2 \pi}\backslash\{0,\pi\}, \\
\f{u'_{+,\te_R}(z,R)}{\cos(\te_R)}, & \te_R \in S_{2 \pi}\backslash\{\pi/2,3\pi/2\},
 \end{cases}    \lb{6.3}
\end{align}
in particular,
\begin{align}
& |\big(\gamma_{\te_0^{\prime},\te_R^{\prime}}(\Hte - z I)^{-1}\big)_1f|
\underset{\te_0'\to \te_0}{=} \Oh(\te_0' - \te_0) C_1(z) \|f\|_{L^2((0,R); dx)},
\lb{6.4} \\
& |\big(\gamma_{\te_0^{\prime},\te_R^{\prime}}(\Hte - z I)^{-1}\big)_2f|
\underset{\te_R'\to \te_R}{=} \Oh(\te_R' - \te_R) C_2(z) \|f\|_{L^2((0,R); dx)},
\lb{6.5}
\end{align}
for some constants $C_j(z) > 0$, $j=1,2$, and hence
\begin{align}
& \gamma_{\te_0,\te_R}(\Hte - z I)^{-1} = 0 \,
\text{ in $\cB\big(L^2((0,R); dx), \bbC^2\big)$},    \lb{6.5a} \\
& \big(\gamma_{\te_0,\te_R}(\Hte - z I)^{-1}\big)_k = 0  \,
\text{ in $\cB\big(L^2((0,R); dx), \bbC\big)$}, \; k=1,2.  \lb{6.5b}
\end{align}
\end{lemma}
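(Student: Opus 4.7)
The approach is direct and computational, built on the Green's function representation \eqref{3.32}--\eqref{3.33} of the resolvent $(\Hte - zI)^{-1}$. Setting $h(z,\cdot) = (\Hte - zI)^{-1} f$ and splitting the integral in \eqref{3.33} at the point $x$, one writes
\begin{align*}
h(z,x) &= \f{1}{W(z)} \bigg( u_{+,\te_R}(z,x) \int_0^x dx' \, u_{-,\te_0}(z,x') f(x')   \\
&\qquad\qquad + u_{-,\te_0}(z,x) \int_x^R dx' \, u_{+,\te_R}(z,x') f(x') \bigg),
\end{align*}
with $W(z) = W(u_{+,\te_R}(z,\cdot), u_{-,\te_0}(z,\cdot))$. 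Differentiating in $x$, the two boundary contributions from the Leibniz rule cancel because their sum equals $[u_{+,\te_R}(z,x) u_{-,\te_0}(z,x) - u_{-,\te_0}(z,x) u_{+,\te_R}(z,x)] f(x) = 0$; hence $h'(z,\cdot)$ takes exactly the same form with $u_\pm$ replaced by $u'_\pm$ outside the integrals. Evaluating at $x=0$ (where the first integral is empty) and at $x=R$ (where the second is empty) now yields, in particular,
\begin{align*}
h(z,0) &= \f{u_{-,\te_0}(z,0)}{W(z)} \big(\ol{u_{+,\te_R}(z,\cdot)}, f\big)_{L^2((0,R);dx)},   \\
h'(z,0) &= \f{u'_{-,\te_0}(z,0)}{W(z)} \big(\ol{u_{+,\te_R}(z,\cdot)}, f\big)_{L^2((0,R);dx)},
\end{align*}
together with the analogous formulas at $x=R$ involving $u_{+,\te_R}(z,R)$, $u'_{+,\te_R}(z,R)$, and $\big(\ol{u_{-,\te_0}(z,\cdot)}, f\big)_{L^2((0,R);dx)}$.

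Next, I would assemble the two components of $\gamma_{\te_0',\te_R'} h(z,\cdot)$ according to \eqref{2.2}; the first component equals
\begin{equation*}
\f{\cos(\te_0') u_{-,\te_0}(z,0) + \sin(\te_0') u'_{-,\te_0}(z,0)}{W(z)} \big(\ol{u_{+,\te_R}(z,\cdot)}, f\big)_{L^2((0,R);dx)}.
\end{equation*}
The boundary relation \eqref{2.5a}, namely $\cos(\te_0) u_{-,\te_0}(z,0) + \sin(\te_0) u'_{-,\te_0}(z,0) = 0$, permits one to eliminate either $u'_{-,\te_0}(z,0)$ (when $\sin(\te_0) \neq 0$) or $u_{-,\te_0}(z,0)$ (when $\cos(\te_0) \neq 0$); the sine subtraction identity $\cos(\te_0')\sin(\te_0) - \sin(\te_0')\cos(\te_0) = -\sin(\te_0'-\te_0)$ then extracts the prefactor $\sin(\te_0'-\te_0)$ along with the two case-by-case expressions displayed in \eqref{6.2}. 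The second component $\big(\gamma_{\te_0',\te_R'}(\Hte - zI)^{-1}\big)_2 f$ is handled identically, using \eqref{2.5b} at $x=R$ in place of \eqref{2.5a}.

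The asymptotic bounds \eqref{6.4}, \eqref{6.5} are then immediate from the resulting explicit formulas: the linear factor $\sin(\te_0'-\te_0)$ is $\Oh(\te_0'-\te_0)$ as $\te_0' \to \te_0$, while the Cauchy--Schwarz inequality bounds
\begin{equation*}
\big|\big(\ol{u_{+,\te_R}(z,\cdot)}, f\big)_{L^2((0,R);dx)}\big| \leq \|u_{+,\te_R}(z,\cdot)\|_{L^2((0,R);dx)} \|f\|_{L^2((0,R);dx)},
\end{equation*}
so that $C_1(z)$ may be chosen as the product of $|W(z)|^{-1}$, $\|u_{+,\te_R}(z,\cdot)\|_{L^2((0,R);dx)}$, and the appropriate boundary value of $u_{-,\te_0}$, and analogously for $C_2(z)$. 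Finally, putting $\te_0' = \te_0$ and $\te_R' = \te_R$ in \eqref{6.2}, \eqref{6.3} kills the $\sin(\te_0'-\te_0)$ and $\sin(\te_R'-\te_R)$ factors for every $f \in L^2((0,R);dx)$, producing \eqref{6.5a}, \eqref{6.5b}.

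I expect the only real difficulty to be bookkeeping rather than conceptual: the two overlapping case splits $\sin(\te) \neq 0$ versus $\cos(\te) \neq 0$ each cover $S_{2\pi}$ except for two points, and one must verify that the two resulting expressions in \eqref{6.2} (and in \eqref{6.3}) agree on their common domain, while tracking signs carefully when rearranging the boundary relations.
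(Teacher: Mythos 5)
Your argument is correct and follows essentially the same route as the paper: both evaluate the Green's function representation of $(\Hte-zI)^{-1}f$ (and its $x$-derivative) at the endpoints $x=0$, $x=R$, then use the boundary relations \eqref{2.5a}, \eqref{2.5b} together with the sine subtraction formula to extract the factors $\sin(\te_0'-\te_0)$, $\sin(\te_R'-\te_R)$ in the two overlapping cases. The only cosmetic difference is that the paper evaluates the relevant branch of $G_{\te_0,\te_R}(z,\cdot,x')$ directly at the boundary rather than first differentiating the full split representation, but the computation is identical.
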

\begin{proof}
Employing \eqref{3.32} and \eqref{3.33} one obtains
\begin{align}
\gamma_{\te_0^{\prime},\te_R^{\prime}}(\Hte - z I)^{-1} f
& = \gamma_{\te_0^{\prime},\te_R^{\prime}} \bigg(\int_0^R dx' \,
G_{\te_0,\te_R}(z,\cdot,x') f(x')\bigg)   \no \\
& = \begin{bmatrix} \bigg(\gamma_{\te_0^{\prime},\te_R^{\prime}} \bigg(\int_0^R dx' \,
G_{\te_0,\te_R}(z,\cdot,x') f(x')\bigg)\bigg)_1 \\[4mm]
\bigg(\gamma_{\te_0^{\prime},\te_R^{\prime}} \bigg(\int_0^R dx' \,
G_{\te_0,\te_R}(z,\cdot,x') f(x')\bigg)\bigg)_2 \end{bmatrix}   \lb{6.6}
\end{align}
and hence (with $W(z) = W(u_{+,\te_R}(z,\cdot), u_{-,\te_0}(z,\cdot))$)
\begin{align}
& \bigg(\gamma_{\te_0^{\prime},\te_R^{\prime}} \bigg(\int_0^R dx' \,
G_{\te_0,\te_R}(z,\cdot,x') f(x')\bigg)\bigg)_1   \no \\
& \quad = \f{1}{W(z)} \bigg\{\cos(\te_0') \bigg(\int_0^R dx' \,
G_{\te_0,\te_R}(z,0,x') f(x')\bigg)   \no \\
& \hspace*{1.95cm} + \sin(\te_0') \bigg(\int_0^R dx' \,
\bigg(\f{\partial}{\partial x} G_{\te_0,\te_R}(z,x,x')\bigg|_{x<x'}\bigg)\bigg|_{x=0}
 f(x')\bigg)\bigg\}  \no \\
& \quad = \f{1}{W(z)} \bigg\{\big[\cos(\te_0') u_{-,\te_0}(z,0)
+ \sin(\te_0') u_{+,\te_0}'(z,0)\big]
\int_0^R dx' \, u_{+,\te_R}(z,x') f(x')\bigg\}  \no \\
& \quad = \f{1}{W(z)} \big(\ol{u_{+,\te_R}(z,\cdot)}, f)_{L^2((0,R); dx)}  \no \\
& \qquad \, \times
\begin{cases} [\cos(\te_0') - \sin(\te_0') \cot(\te_0)] u_{-,\te_0}(z,0),
& \te_0 \in S_{2 \pi}\backslash\{0,\pi\}, \\
 [- \cos(\te_0') \tan(\te_0) + \sin(\te_0')] u_{-,\te_0}'(z,0),
& \te_0 \in S_{2 \pi}\backslash\{\pi/2,3\pi/2\}.
\end{cases}     \lb{6.7}
\end{align}
Equation \eqref{6.7} is easily seen to be equivalent to \eqref{6.2}. Equation \eqref{6.3}
is derived analogously.
\end{proof}

Introducing the orthogonal projections in $\bbC^2$,
\begin{equation}
P_1 = \begin{bmatrix} 1 & 0 \\ 0 & 0 \end{bmatrix}, \quad
P_2  = \begin{bmatrix} 0 & 0 \\ 0 & 1 \end{bmatrix},    \lb{6.8}
\end{equation}
one obtains the following result, patterned after \cite[Lemma\ 6]{Na01} in the
context of Schr\"odinger operators with Dirichlet and Neumann boundary conditions
on a cube in $\bbR^n$:

\begin{lemma} \lb{l6.2}
Assume that $\te_0, \te_R, \te_0', \te_R' \in S_{2 \pi}$, let $\Hte$ and
$H_{\theta_0',\theta_R'}$ be defined as in \eqref{2.2a}, and suppose that
$z\in\bbC\big\backslash\big(\sigma(\Hte)\cup \sigma(H_{\theta_0',\theta_R'})\big)$.
Then
\begin{align}
& (H_{\theta_0',\theta_R'} - z I)^{-1} = (\Hte - z I)^{-1}  \no \\
& \quad + \big[\gamma_{\ol{\te_0'},\ol{\te_R'}} ((\Hte)^* - {\ol z} I)^{-1}\big]^*
S_{\te_0'-\te_0, \te_R'-\te_R}^{-1}
\big[\gamma_{\te_0,\te_R} (H_{\theta_0',\theta_R'} - z I)^{-1}\big],   \lb{6.9} \\
& \hspace*{7.95cm} \te_0' \neq \te_0, \; \te_R' \neq \te_R,   \no \\
& (H_{\theta_0,\theta_R'} - z I)^{-1} = (\Hte - z I)^{-1}  \no \\
& \quad + \big[\gamma_{\ol{\te_0},\ol{\te_R'}} ((\Hte)^* - {\ol z} I)^{-1}\big]^*
[\sin(\te_R' - \te_R)]^{-1}P_2 
\big[\gamma_{\te_0,\te_R} (H_{\theta_0,\theta_R'} - z I)^{-1}\big],    \no \\
& \hspace*{9.5cm}  \te_R' \neq \te_R,   \lb{6.10} \\
& (H_{\theta_0',\theta_R} - z I)^{-1} = (\Hte - z I)^{-1}  \no \\
& \quad + \big[\gamma_{\ol{\te_0'},\ol{\te_R}} ((\Hte)^* - {\ol z} I)^{-1}\big]^*
[\sin(\te_0' - \te_0)]^{-1}P_1
\big[\gamma_{\te_0,\te_R} (H_{\theta_0',\theta_R} - z I)^{-1}\big],    \no \\
& \hspace*{9.6cm} \te_0' \neq \te_0.     \lb{6.11}
\end{align}
\end{lemma}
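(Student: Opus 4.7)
The plan is to deduce \eqref{6.9} from the explicit representation \eqref{3.43} derived in the proof of Theorem \ref{t3.5}, combined with the uniqueness statement of Lemma \ref{l2.2}. Fix $f \in L^2((0,R); dx)$, set $u_0 := (\Hte - zI)^{-1} f$ and $u_1 := (H_{\te_0',\te_R'} - zI)^{-1} f$, and introduce
\[
\widetilde{w} := \big[\gamma_{\ol{\te_0'},\ol{\te_R'}} ((\Hte)^* - \ol{z} I)^{-1}\big]^* S_{\te_0'-\te_0,\te_R'-\te_R}^{-1}\, \gate u_1.
\]
Formula \eqref{3.43} exhibits $\widetilde{w}$ as a linear combination of $u_{+,\te_R}(z,\cdot)$ and $u_{-,\te_0}(z,\cdot)$, hence as a solution of the homogeneous equation $-w'' + Vw = zw$. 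Consequently both $u_1$ and $u_0 + \widetilde{w}$ satisfy the inhomogeneous equation with right-hand side $f$, and by Lemma \ref{l2.2} they agree once their $\gate$ traces coincide. Since $\gate u_0 = 0$, this reduces to proving the key identity
\[
\gate \big[\gamma_{\ol{\te_0'},\ol{\te_R'}} ((\Hte)^* - \ol{z} I)^{-1}\big]^* = S_{\te_0'-\te_0,\te_R'-\te_R}
\]
as an element of $\bbC^{2\times 2}$.

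To verify this identity, substitute \eqref{3.43} on the left and evaluate $\gate$ entrywise. In the $(1,1)$-component the term proportional to $a_R$ vanishes by the normalization \eqref{2.5a}, leaving $W(z)^{-1} \big[\cos(\te_0') u_{-,\te_0}(z,0) + \sin(\te_0') u'_{-,\te_0}(z,0)\big]\big[\cos(\te_0) + \sin(\te_0) u'_{+,\te_R}(z,0)\big] a_0$. By \eqref{2.5a} the pair $(u_{-,\te_0}(z,0), u'_{-,\te_0}(z,0))$ is proportional to $(-\sin\te_0, \cos\te_0)$, so the angle-difference identity yields
\[
\cos(\te_0')u_{-,\te_0}(z,0) + \sin(\te_0')u'_{-,\te_0}(z,0) = \sin(\te_0'-\te_0)\big[-\sin(\te_0)u_{-,\te_0}(z,0) + \cos(\te_0)u'_{-,\te_0}(z,0)\big].
\]
The first equality of \eqref{3.18} then identifies the product of the two bracketed factors with $W(z)$, so the $(1,1)$-entry equals $\sin(\te_0'-\te_0)\,a_0$. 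The $(2,2)$-entry is treated symmetrically at $x=R$ via \eqref{2.5b} and the second equality of \eqref{3.18}, giving $\sin(\te_R'-\te_R)\,a_R$; the off-diagonal entries vanish by \eqref{2.5a} and \eqref{2.5b}. This completes the verification and establishes \eqref{6.9}.

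The formulas \eqref{6.10} and \eqref{6.11} follow from the same template, with minor modifications dictated by the singularity of $S_{\te_0'-\te_0,\te_R'-\te_R}$ when one of its diagonal entries is zero. For \eqref{6.10}, the $x=0$ endpoint parameter is common to $\Hte$ and $H_{\te_0,\te_R'}$, so $u_1 \in \dom(H_{\te_0,\te_R'})$ forces the first component of $\gate u_1$ to vanish; hence $P_2 \gate u_1 = \gate u_1$ automatically, and the argument closes via the specialization of the key identity to $\te_0' = \te_0$, in which the right-hand side reduces to $\diag(0, \sin(\te_R'-\te_R))$. Formula \eqref{6.11} is symmetric with the roles of the two endpoints interchanged.

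The principal obstacle is the establishment of the key identity itself. It is not a formal consequence of Theorem \ref{t3.5}, which couples the outer to the inner parameters (it applies $\gamma_{\te_0',\te_R'}$, rather than $\gate$, to the adjoint resolvent factor); the verification therefore requires reopening \eqref{3.43} and carrying out the boundary calculation above. The cases $\sin\te_0 = 0$ (and likewise $\sin\te_R = 0$) must be checked separately, but the proportionality argument and \eqref{2.5c}, \eqref{2.5d} handle them uniformly. Once the key identity is secured, the rest of the proof is routine book-keeping.
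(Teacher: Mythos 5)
Your proof is correct, and it takes a genuinely different route from the paper's. The paper argues weakly: with $\phi = ((\Hte)^* - \ol z I)^{-1}f$ and $\psi = (H_{\te_0',\te_R'} - zI)^{-1}g$ it integrates by parts twice, rewrites the resulting Lagrange boundary terms via the boundary conditions \eqref{6.15} as
$-\f{\sin(\te_R'-\te_R)}{\sin(\te_R')\sin(\te_R)}\ol{\phi(R)}\psi(R) - \f{\sin(\te_0'-\te_0)}{\sin(\te_0')\sin(\te_0)}\ol{\phi(0)}\psi(0)$, and identifies this with $\big(\gamma_{\ol{\te_0'},\ol{\te_R'}}\phi, S_{\te_0'-\te_0,\te_R'-\te_R}^{-1}\gamma_{\te_0,\te_R}\psi\big)_{\bbC^2}$; since the cotangents degenerate when a boundary parameter lies in $\{0,\pi\}$, the paper must first exclude those values and then remove the restriction by a norm-continuity argument resting on Lemma \ref{l6.1}. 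You instead work at the level of the ODE: both sides of \eqref{6.9} applied to $f$ solve the same inhomogeneous equation, the correction term is an explicit homogeneous solution by the kernel formula \eqref{3.43}, and equality follows from the trace identity $\gate\big[\gamma_{\ol{\te_0'},\ol{\te_R'}}((\Hte)^*-\ol z I)^{-1}\big]^* = S_{\te_0'-\te_0,\te_R'-\te_R}$ together with the uniqueness statement of Lemma \ref{l2.2}. I checked your key identity: the off-diagonal entries vanish by \eqref{2.5a}, \eqref{2.5b}, the proportionality $(u_{-,\te_0}(z,0),u_{-,\te_0}'(z,0)) \parallel (-\sin(\te_0),\cos(\te_0))$ gives the asserted angle-difference factorization, and the two Wronskian evaluations in \eqref{3.18} collapse the diagonal entries to $\sin(\te_0'-\te_0)$ and $\sin(\te_R'-\te_R)$; your observation that this is not a formal instance of \eqref{3.33aa} (whose outer and inner trace parameters are coupled) is also accurate, and your reduction of \eqref{6.10}, \eqref{6.11} via $P_2\gate u_1 = \gate u_1$ for $u_1 \in \dom(H_{\te_0,\te_R'})$ is sound. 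What your route buys is the avoidance of the degenerate-parameter exclusions and the subsequent limiting argument, at the price of relying on the explicit one-dimensional Green's function representation, which makes the paper's more abstract sesquilinear computation the one that generalizes more readily beyond the interval setting.
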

\begin{proof}
We first consider the case
\begin{equation}
\te_0, \te_R, \te_0', \te_R' \in S_{2 \pi}\backslash\{0,\pi\}, \quad
\te_0' \neq \te_0, \quad \te_R' \neq \te_R,    \lb{6.12}
\end{equation}
which illustrates the principal idea of the proof. To get started, we pick 
$f, g \in L^2((0,R); dx)$ and introduce
\begin{align}
\begin{split}
\phi &= ((\Hte)^* - \ol z I)^{-1} f \in \dom((\Hte)^*),    \\
\psi &= (H_{\te_0',\te_R'} - z I)^{-1} g \in \dom(H_{\te_0',\te_R'}).   \lb{6.13}
\end{split}
\end{align}
Then one computes
\begin{align}
& ((\Hte)^* - \ol z I) \phi, \psi)_{L^2((0,R); dx)} -
(\phi, (H_{\te_0',\te_R'} - z I) \psi)_{L^2((0,R); dx)}   \no \\
& \quad = - \int_0^R dx \, \ol{\phi''(x)} \psi(x) + \int_0^R dx \, \ol{\phi(x)} \psi''(x) \no \\
& \quad = - \ol{\phi'(R)} \psi(R) + \ol{\phi'(0)} \psi(0) +
\ol{\phi(R)} \psi'(R) - \ol{\phi(0)} \psi'(0)    \no \\
& \quad = [\cot(\te_R')-\cot(\te_R)] \ol{\phi(R)} \psi(R) +
[\cot(\te_0')-\cot(\te_0)] \ol{\phi(0)} \psi(0)    \no \\
& \quad = - \f{\sin(\te_R'-\te_R)}{\sin(\te_R')\sin(\te_R)}  \ol{\phi(R)} \psi(R)
- \f{\sin(\te_0'-\te_0)}{\sin(\te_0')\sin(\te_0)} \ol{\phi(0)} \psi(0),      \lb{6.14}
\end{align}
using the fact that \eqref{6.13} implies 
\begin{align}
\begin{split}
\cos(\ol \te_0) \phi(0) + \sin(\ol \te_0) \phi'(0) &= 0,  \\
\cos(\ol \te_R) \phi(R) - \sin(\ol \te_R) \phi'(R) &= 0,  \\
\cos(\te_0') \psi(0) + \sin(\te_0') \psi'(0) &= 0,      \lb{6.15}  \\
\cos(\te_R') \psi(R) - \sin(\te_R') \psi'(R) &= 0.
\end{split}
\end{align}

Using \eqref{6.15} once again, one also computes
\begin{align}
(\gamma_{\ol{\te_0'},\ol{\te_R'}} \phi, \gamma_{\te_0,\te_R} \psi)_{\bbC^2} =
- \f{\sin^2(\te_R'-\te_R)}{\sin(\te_R')\sin(\te_R)}  \ol{\phi(R)} \psi(R)
- \f{\sin^2(\te_0'-\te_0)}{\sin(\te_0')\sin(\te_0)} \ol{\phi(0)} \psi(0).      \lb{6.16}
\end{align}

A comparison of \eqref{6.14} and \eqref{6.16} then yields
\begin{align}
& ((\Hte)^* - \ol z I) \phi, \psi)_{L^2((0,R); dx)} -
(\phi, (H_{\te_0',\te_R'} - z I) \psi)_{L^2((0,R); dx)}   \no \\
& \quad = \big(\gamma_{\ol{\te_0'},\ol{\te_R'}} \phi,
S_{\te_0'-\te_0,\te_R'-\te_R}^{-1}
\gamma_{\te_0,\te_R} \psi\big)_{\bbC^2},    \lb{6.17}
\end{align}
or equivalently,
\begin{align}
& \big(f,(H_{\theta_0',\theta_R'} - z I)^{-1} g\big)_{L^2((0,R); dx)}
= \big(f, (\Hte - z I)^{-1} g\big)_{L^2((0,R); dx)}   \no \\
& \quad + \big(f,
\big[\gamma_{\ol{\te_0'},\ol{\te_R'}} ((\Hte)^* - {\ol z} I)^{-1}\big]^*
S_{\te_0'-\te_0, \te_R'-\te_R}^{-1}     \lb{6.18} \\
& \qquad \times \big[\gamma_{\te_0,\te_R} (H_{\theta_0',\theta_R'} - z I)^{-1}\big]
g)_{L^2((0,R); dx)},    \no
\end{align}
and hence \eqref{6.9} since $f, g \in L^2((0,R); dx)$ are arbitrary, under the
additional assumptions in \eqref{6.12}.

Employing Lemma \ref{l6.1} (and particularly, \eqref{6.4}--\eqref{6.5b}) then
shows that \eqref{6.9} is continuous in $\te_0, \te_R, \te_0', \te_R' \in S_{2 \pi}$
with respect to the norm in $\cB\big(L^2((0,R); dx)\big)$, removing the restrictions
$\te_0, \te_R, \te_0', \te_R' \in S_{2 \pi}\backslash\{0,\pi\}$ in \eqref{6.18}, and thus proving \eqref{6.9}.

Analogous considerations imply \eqref{6.10} and \eqref{6.11}.
\end{proof}

The principal result of this section, Krein's formula for the difference of resolvents of
$H_{\theta_0',\theta_R'}$ and $\Hte$, then reads as follows:

\begin{theorem} \lb{t6.3}
Assume that $\te_0, \te_R, \te_0', \te_R' \in S_{2 \pi}$, let $\Hte$ and
$H_{\theta_0',\theta_R'}$ be defined as in \eqref{2.2a}, and suppose that
$z\in\bbC\big\backslash\big(\sigma(\Hte)\cup \sigma(H_{\theta_0',\theta_R'})\big)$.
Then, with $\Lates (z)$ introduced in \eqref{2.6},
\begin{align}
& (H_{\theta_0',\theta_R'} -z I)^{-1} = (\Hte -z I)^{-1}  \no \\
& \quad - \big[\gamma_{\ol{\theta_0^{\prime}},\ol{\theta_R^{\prime}}}
((\Hte)^* - {\ol z} I)^{-1}\big]^* S_{\te'_0-\te'_0,\te'_R-\te_R}^{-1}   \lb{6.21} \\
 & \qquad \times \Big[\Lates (z)\Big]^{-1}
 \big[\gamma_{\theta_0^{\prime},\theta_R^{\prime}}(\Hte - z I)^{-1}\big],
\quad \te_0 \neq \te_0', \; \te_R \neq \te_R'.  \no \\
& (H_{\theta_0,\theta_R'} -z I)^{-1} = (\Hte -z I)^{-1}  \no \\
& \quad - \big[\gamma_{\ol{\theta_0},\ol{\theta_R^{\prime}}}
((\Hte)^* - {\ol z} I)^{-1}\big]^* [\sin(\te'_R-\te_R)]^{-1} P_2   \lb{6.22} \\
 & \qquad \times \Big[\Lambda_{\te_0,\te_R}^{\te_0,\te_R'} (z)\Big]^{-1} P_2 
 \big[\gamma_{\theta_0,\theta_R^{\prime}}(\Hte - z I)^{-1}\big],
\quad \te_R \neq \te_R',  \no \\
& (H_{\theta_0',\theta_R} -z I)^{-1} = (\Hte -z I)^{-1}  \no \\
& \quad - \big[\gamma_{\ol{\theta_0'},\ol{\theta_R}}
((\Hte)^* - {\ol z} I)^{-1}\big]^* [\sin(\te_0'-\te_0)]^{-1} P_1  \lb{6.23} \\
 & \qquad \times \Big[\Lambda_{\te_0,\te_R}^{\te_0',\te_R} (z)\Big]^{-1} P_1 
 \big[\gamma_{\theta_0',\theta_R}(\Hte - z I)^{-1}\big],
\quad \te_0 \neq \te_0'.  \no
\end{align}
\end{theorem}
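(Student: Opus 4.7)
The plan is to combine Lemma \ref{l6.2} with Theorem \ref{t3.5}, eliminating the boundary trace $\gamma_{\te_0,\te_R}(H_{\te_0',\te_R'}-zI)^{-1}$ on the right-hand side of \eqref{6.9}--\eqref{6.11} in favor of $\gamma_{\te_0',\te_R'}(\Hte-zI)^{-1}$ and the inverse boundary data map. In each case the key move is to apply the trace associated with the ``new'' operator $H_{\te_0',\te_R'}$ (resp.\ $H_{\te_0,\te_R'}$, $H_{\te_0',\te_R}$) to both sides of the respective formula in Lemma \ref{l6.2} and use that the trace of a function in the domain of an operator vanishes (cf.\ \eqref{6.5a}, \eqref{6.5b}).

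For \eqref{6.21} I would start from \eqref{6.9} and apply $\gamma_{\te_0',\te_R'}$ to both sides. By \eqref{6.5a}, the left-hand side vanishes, so one obtains
\begin{align*}
0 &= \gamma_{\te_0',\te_R'}(\Hte-zI)^{-1} \\
 & \quad + \gamma_{\te_0',\te_R'}\big[\gamma_{\ol{\te_0'},\ol{\te_R'}}((\Hte)^*-\ol{z}I)^{-1}\big]^{*} S_{\te_0'-\te_0,\te_R'-\te_R}^{-1} \gamma_{\te_0,\te_R}(H_{\te_0',\te_R'}-zI)^{-1}.
\end{align*}
Invoking Theorem \ref{t3.5} (in the form \eqref{3.33aa}) to rewrite the underlined composition as $\Lates(z)\,S_{\te_0'-\te_0,\te_R'-\te_R}$, the factors of $S_{\te_0'-\te_0,\te_R'-\te_R}$ cancel and one solves for
\begin{equation*}
\gamma_{\te_0,\te_R}(H_{\te_0',\te_R'}-zI)^{-1} = -\big[\Lates(z)\big]^{-1}\gamma_{\te_0',\te_R'}(\Hte-zI)^{-1}.
\end{equation*}
Substituting this back into \eqref{6.9} yields \eqref{6.21} (with the typographical $S_{\te_0'-\te_0',\te_R'-\te_R}^{-1}$ interpreted as $S_{\te_0'-\te_0,\te_R'-\te_R}^{-1}$).

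For \eqref{6.22} and \eqref{6.23} the procedure is the same but now the diagonal matrix on the right of Theorem \ref{t3.5} is degenerate: with $\te_0'=\te_0$ one has $S_{0,\te_R'-\te_R}=\sin(\te_R'-\te_R)P_2$. Applying $\gamma_{\te_0,\te_R'}$ to \eqref{6.10}, using \eqref{6.5a} and \eqref{3.33aa}, the scalar $\sin(\te_R'-\te_R)$ cancels and one gets
\begin{equation*}
0 = \gamma_{\te_0,\te_R'}(\Hte-zI)^{-1} + \Lambda_{\te_0,\te_R}^{\te_0,\te_R'}(z)\,P_2\,\gamma_{\te_0,\te_R}(H_{\te_0,\te_R'}-zI)^{-1}.
\end{equation*}
Here I would exploit Remark \ref{r2.5}, which asserts that $\Lambda_{\te_0,\te_R}^{\te_0,\te_R'}(z)$ has a vanishing $(1,2)$ entry and therefore, together with $\big(\Lambda_{\te_0,\te_R}^{\te_0,\te_R'}(z)\big)_{1,1}=1$, is lower triangular; a direct computation gives $P_2\big[\Lambda_{\te_0,\te_R}^{\te_0,\te_R'}(z)\big]^{-1}P_2 = \big(\Lambda_{\te_0,\te_R}^{\te_0,\te_R'}(z)\big)_{2,2}^{-1}P_2$. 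Since membership of the resolvents in $\dom(\Hte)$ and $\dom(H_{\te_0,\te_R'})$ forces the $(1,1)$ component of both boundary vectors above to vanish (the two operators share the boundary condition at $0$), both sides lie in $\operatorname{ran}(P_2)$, and one may legitimately solve
\begin{equation*}
P_2\,\gamma_{\te_0,\te_R}(H_{\te_0,\te_R'}-zI)^{-1} = -P_2\big[\Lambda_{\te_0,\te_R}^{\te_0,\te_R'}(z)\big]^{-1}P_2\,\gamma_{\te_0,\te_R'}(\Hte-zI)^{-1}.
\end{equation*}
Substituting this expression into \eqref{6.10} produces \eqref{6.22}; formula \eqref{6.23} follows in the symmetric way from \eqref{6.11}, using the companion relation $\big(\Lambda_{\te_0,\te_R}^{\te_0',\te_R}\big)_{2,1}=0$ from Remark \ref{r2.5} and the projection $P_1$.

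The main obstacle is the degenerate situations in \eqref{6.22} and \eqref{6.23}: one has to justify that the seemingly noninvertible combination $P_j\big[\Lambda_{\te_0,\te_R}^{\te_0,\te_R'}(z)\big]^{-1}P_j$ appearing in the theorem statement actually gives the correct partial inverse. This is exactly what the triangularity coming from Remark \ref{r2.5} provides, together with the observation that the boundary trace of any function in the appropriate operator domain automatically lies in $\operatorname{ran}(P_2)$ (resp.\ $\operatorname{ran}(P_1)$), so that the identity holds without loss of information. The remaining computations are routine algebra once Theorem \ref{t3.5} has been brought to bear.
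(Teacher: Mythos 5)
Your argument is correct, and it reaches \eqref{6.21}--\eqref{6.23} by a genuinely different (and somewhat more economical) route than the paper. The paper first takes adjoints of \eqref{6.9}--\eqref{6.11} (replacing $z$, $\te_0,\te_R$, $V$ by their conjugates) to produce dual identities in which the roles of $\Hte$ and $H_{\te_0',\te_R'}$ are interchanged, then applies $\gamma_{\te_0,\te_R}$ to those, and finally identifies
$\gamma_{\te_0,\te_R}\big[\gamma_{\ol{\te_0},\ol{\te_R}}((H_{\te_0',\te_R'})^*-\ol z I)^{-1}\big]^*
= -\big[\Lates(z)\big]^{-1}S_{\te_0'-\te_0,\te_R'-\te_R}$ via \eqref{2.48} and \eqref{3.33aa}. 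You instead apply $\gamma_{\te_0',\te_R'}$ directly to \eqref{6.9}, annihilate the left-hand side by \eqref{6.5a}, and invoke Theorem \ref{t3.5} only for the unperturbed operator $\Hte$; this yields
$\gamma_{\te_0,\te_R}(H_{\te_0',\te_R'}-zI)^{-1}=-\big[\Lates(z)\big]^{-1}\gamma_{\te_0',\te_R'}(\Hte-zI)^{-1}$
in one step, with no adjoints and no appeal to the boundary data map attached to the perturbed operator. The price is paid in the degenerate cases \eqref{6.22}, \eqref{6.23}: since $S_{0,\te_R'-\te_R}$ is singular, you must justify solving the resulting rank-deficient linear system, which you do correctly by observing that $\Lambda_{\te_0,\te_R}^{\te_0,\te_R'}(z)$ is lower triangular with unit $(1,1)$ entry (Remark \ref{r2.5} together with \eqref{2.20a}) and that the first component of $\gamma_{\te_0,\te_R'}(\Hte-zI)^{-1}$ vanishes (cf.\ \eqref{6.2} with $\te_0'=\te_0$), whence $P_2\big[\Lambda_{\te_0,\te_R}^{\te_0,\te_R'}(z)\big]^{-1}P_2$ is the correct partial inverse; the paper never needs this discussion because that factor emerges automatically from its nondegenerate computation via \eqref{6.27}. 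Both proofs rest on the same three pillars --- Lemma \ref{l6.2}, the vanishing trace \eqref{6.5a}/\eqref{6.5b}, and the representation \eqref{3.33aa} --- so the difference is one of bookkeeping rather than substance, but your version buys a shorter path to \eqref{6.21} at the cost of an extra structural observation in the two degenerate cases.
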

\begin{proof}
Taking adjoints on both sides of \eqref{6.9}, and subsequently replacing $z$ 
by $\ol z$, $\te_0, \te_R$ by $\ol \te_0, \ol \te_R$, and $V$ by $\ol V$, then yields
\begin{align}
& (H_{\theta_0',\theta_R'} - z I)^{-1} = (\Hte - z I)^{-1}  \no \\
& \quad +
\big[\gamma_{\ol{\te_0},\ol{\te_R}} ((H_{\theta_0',\theta_R'})^* - {\ol z} I)^{-1}\big]^*
S_{\te_0'-\te_0, \te_R'-\te_R}^{-1}
\big[\gamma_{\te_0',\te_R'} (\Hte - z I)^{-1}\big],   \lb{6.24} \\
& \hspace*{7.95cm} \te_0' \neq \te_0, \; \te_R' \neq \te_R.   \no
\end{align}
Applying $\gamma_{\te_0,\te_R}$ to both sides of \eqref{6.24}, and using the
fact that $\gamma_{\te_0,\te_R}  (\Hte - z I)^{-1} = 0$ by \eqref{6.5a}, one
obtains
\begin{align}
& \gamma_{\te_0,\te_R} (H_{\theta_0',\theta_R'} - z I)^{-1} =
\gamma_{\te_0,\te_R}
\big[\gamma_{\ol{\te_0},\ol{\te_R}} ((H_{\theta_0',\theta_R'})^* - {\ol z} I)^{-1}\big]^*
\no \\
& \quad \times S_{\te_0'-\te_0, \te_R'-\te_R}^{-1}
\big[\gamma_{\te_0',\te_R'} (\Hte - z I)^{-1}\big],  \quad
\te_0' \neq \te_0, \; \te_R' \neq \te_R.    \lb{6.25}
\end{align}
An insertion of \eqref{6.25} into \eqref{6.9} implies
\begin{align}
& (H_{\theta_0',\theta_R'} - z I)^{-1} = (\Hte - z I)^{-1}  \no \\
& \quad + \big[\gamma_{\ol{\te_0'},\ol{\te_R'}} ((\Hte)^* - {\ol z} I)^{-1}\big]^*
S_{\te_0'-\te_0, \te_R'-\te_R}^{-1}
\gamma_{\te_0,\te_R}
\big[\gamma_{\ol{\te_0},\ol{\te_R}} ((H_{\theta_0',\theta_R'})^* - {\ol z} I)^{-1}\big]^*
\no \\
& \qquad \times S_{\te_0'-\te_0, \te_R'-\te_R}^{-1}
\big[\gamma_{\te_0',\te_R'} (\Hte - z I)^{-1}\big],  \quad
\te_0' \neq \te_0, \; \te_R' \neq \te_R.   \lb{6.26}
\end{align}
Using \eqref{2.48} and \eqref{3.33aa} one obtains
\begin{equation}
\gamma_{\te_0,\te_R} \big[\gamma_{\ol{\te_0},\ol{\te_R}}
((H_{\te_0',\te_R'})^*- {\ol z} I)^{-1}\big]^* = -
\Big[\Lates (z)\Big]^{-1} S_{\te_0'-\te_0,\te_R'-\te_R},
\quad z \in \bbC_+,  \lb{6.27}
\end{equation}
and inserting \eqref{6.27} into \eqref{6.26} yields \eqref{6.21}.

Since equations \eqref{6.22} and \eqref{6.23} are proved similarly, we just briefly 
sketch the proof of \eqref{6.22}: First, in analogy to \eqref{6.24}, one derives from 
\eqref{6.10} that 
\begin{align}
& (H_{\theta_0,\theta_R'} - z I)^{-1} = (\Hte - z I)^{-1}  \no \\
& \quad + \big[\gamma_{\ol{\te_0},\ol{\te_R}} ((H_{\te_0,\te_R'})^* - {\ol z} I)^{-1}\big]^*
[\sin(\te_R' - \te_R)]^{-1}P_2 
\big[\gamma_{\te_0,\te_R'} (H_{\theta_0,\theta_R} - z I)^{-1}\big],    \no \\
& \hspace*{9.5cm}  \te_R' \neq \te_R.   \lb{6.28}
\end{align}
Applying $\gamma_{\te_0,\te_R}$ to both sides of \eqref{6.28}, and using again the
fact that $\gamma_{\te_0,\te_R}  (\Hte - z I)^{-1} = 0$, one obtains
\begin{align}
& \gamma_{\te_0,\te_R} (H_{\theta_0,\theta_R'} - z I)^{-1} =
\gamma_{\te_0,\te_R}
\big[\gamma_{\ol{\te_0},\ol{\te_R}} ((H_{\theta_0,\theta_R'})^* - {\ol z} I)^{-1}\big]^*
\no \\
& \quad \times [\sin(\te_R'-\te_R)]^{-1} P_2 
\big[\gamma_{\te_0,\te_R'} (\Hte - z I)^{-1}\big],  \quad 
\te_R' \neq \te_R.    \lb{6.29}
\end{align}
Finally, an insertion of \eqref{6.29} into the right-hand side of \eqref{6.10}, and using 
\eqref{6.27} in the special case $\te_0'=\te_0$, that is,
\begin{align}
& \gamma_{\te_0,\te_R} \big[\gamma_{\ol{\te_0},\ol{\te_R}}
((H_{\te_0,\te_R'})^*- {\ol z} I)^{-1}\big]^* 
= - \Big[\Lambda_{\te_0,\te_R}^{\te_0,\te_R'} (z)\Big]^{-1} S_{0,\te_R'-\te_R}  \no \\
& \quad = - \Big[\Lambda_{\te_0,\te_R}^{\te_0,\te_R'} (z)\Big]^{-1} 
[\sin(\te_R'-\te_R)]^{-1} P_2, \quad z \in \bbC_+,   
\end{align}
yields \eqref{6.22}.
\end{proof}

\section{A Brief Outlook} 
\label{s7}

In this section we provide a brief comparison between $\Late (z)$ and the 
$2 \times 2$ matrix-valued Weyl--Titchmarsh function associated with $\Hte$ 
in the self-adjoint context, 
that is, under the assumptions $\te_0, \te_R \in [0, 2\pi)$ and $V$ is real-valued 
in addition to \eqref{2.2aa}. While both $2 \times 2$ matrices are matrix-valued 
Herglotz functions, they are quite different as this brief section will show. A more 
detailed discussion of the interrelations between these two matrices is beyond the 
scope of this paper and will be taken up elsewhere.

To exhibit some of the differences between $\Late (z)$ and various versions of the 
Weyl--Titchmarsh matrix we will link the entries in both matrices to the Green's 
function $G_{\te_0,\te_R} (z,x,x')$ of $\Hte$ and provide some formulas which may well be of independent interest.

We start by linking $\Late (z)$ and $G_{\te_0,\te_R} (z,x,x')$ and list a variety of pertinent formulas (choosing $z\in\bbC\backslash\bbR$ for notational simplicity):
\begin{align}
\Late (z)_{1,1} &= m_{+,\te_0}(z,\te_R) \no \\
&= \dfrac{-\sin(\te_0) + \cos(\te_0)m_{+,0}(z,\te_R)}
{\cos(\te_0) + \sin(\te_0)m_{+,0}(z,\te_R)}  \no \\[1mm]
&= \dfrac{-\sin(\te_0) + \cos(\te_0)u_{+,\te_R}'(z,0)}{\cos(\te_0) 
+ \sin(\te_0)u_{+,\te_R}'(z,0)}    \no \\[1mm] 
&= \f{1}{\sin^2(\te_0)} \big[G_{\te_0,\te_R} (z,0,0) + \sin(\te_0)\cos(\te_0)\big],   
\lb{7.1}\\
&\hspace*{1.95cm} \te_0 \in [0, 2\pi)\backslash\{0,\pi\}, \, \te_R \in [0, 2\pi),   \no \\
\Lambda_{0,0} (z)_{1,1} & = m_{+,0}(z,0) \no \\
&= \lim_{0<x<x', \, x'\downarrow 0} \partial_x \partial_{x'}
G_{0,0} (z,x,x').    \lb{7.2}
\end{align}
Here we used \eqref{3.24} and (cf.\ also \eqref{3.11b})
\begin{align}
\begin{split}
G_{\te_0,\te_R} (z,0,0) &= \f{- \sin(\te_0)}{\cos(\te_0) + \sin(\te_0) m_{+,0}(z,\te_R)} 
 \\[1mm] 
&= \sin(\te_0) \big[- \cos(\te_0) + \sin(\te_0) m_{+,\te_0}(z,\te_R)\big].   \lb{7.3}
\end{split} 
\end{align}
In the same manner one obtains
\begin{align}
\Late (z)_{2,2} &= - m_{-,\te_R}(z,\te_0) \no \\
&= - \dfrac{\sin(\te_R) + \cos(\te_R)m_{-,0}(z,\te_0)}
{\cos(\te_R) - \sin(\te_R)m_{-,0}(z,\te_0)}   \no \\[1mm]
&= - \dfrac{\sin(\te_R) + \cos(\te_R)u_{-,\te_0}'(z,R)}{\cos(\te_R)
- \sin(\te_R)u_{-,\te_0}'(z,R)}   \no \\[1mm] 
&= \f{1}{\sin^2(\te_R)} \big[G_{\te_0,\te_R} (z,R,R) + \sin(\te_R)\cos(\te_R)\big], 
\lb{7.4}\\
&\hspace*{2.35cm} \te_0 \in [0, 2\pi), \, \te_R \in [0, 2\pi)\backslash\{0,\pi\},   \no \\
\Lambda_{0,0} (z)_{2,2} &= - m_{-,0}(z,0) \no \\
&= \lim_{0<x<x', \, x\uparrow R} \partial_x \partial_{x'}
G_{0,0} (z,x,x').    \lb{7.5}
\end{align}
Here we used \eqref{3.28} and (cf.\ also \eqref{3.25})
\begin{align}
\begin{split}
G_{\te_0,\te_R} (z,R,R) &= \f{- \sin(\te_R)}{\cos(\te_R) - \sin(\te_R) m_{-,0}(z,\te_0)} 
 \\[1mm] 
&= \sin(\te_R) \big[- \cos(\te_R) - \sin(\te_R) m_{-,\te_R}(z,\te_0)\big].   \lb{7.6}
\end{split}
\end{align}
Similarly, the off-diagonal terms of $\Late (z)$ in \eqref{3.11a} can be written as
\begin{align}
& \Late (z)_{1,2} = \Late (z)_{2,1}   \no \\
& \quad = \f{1}{\sin(\te_R)} G_{\te_0,\te_R}  (z,R,R) 
\begin{cases}
\f{- u_{-,\te_0}'(z,0)}{\cos(\te_0)}, & \te_0 \in [0, 2\pi)\backslash\{\pi/2, 3\pi/2\}, \\
\f{u_{-,\te_0}(z,0)}{\sin(\te_0)}, & \te_0 \in [0, 2\pi)\backslash\{0, \pi\},\end{cases}    
\lb{7.7} \\ 
& \hspace{6.65cm}  \te_R \in [0, 2\pi)\backslash\{0,\pi\},   \no \\ 
& \quad = \f{1}{\sin(\te_0)} G_{\te_0,\te_R}  (z,0,0) 
\begin{cases}
\f{u_{+,\te_R}'(z,R)}{\cos(\te_R)}, & \te_R \in [0, 2\pi)\backslash\{\pi/2, 3\pi/2\}, \\
\f{u_{+,\te_R}(z,R)}{\sin(\te_R)}, & \te_R \in [0, 2\pi)\backslash\{0, \pi\},\end{cases}    
\lb{7.8} \\ 
& \hspace{6.45cm}  \te_0 \in [0, 2\pi)\backslash\{0,\pi\}.   \no
\end{align}

Next we turn to (variants of) the $2 \times 2$ Weyl--Titchmarsh matrix 
corresponding to $\Hte$ with respect to an interior reference point 
$x_0 \in (0,R)$. We start by introducing 
(again, choosing $z\in\bbC\backslash\bbR$ for notational simplicity) 
\begin{align}
&m_{+,0} (z, x_0,\te_R) = \f{u_{+,\te_R}' (z,x_0)}{u_{+,\te_R} (z,x_0)}, 
\quad x_0 \in (0,R),   \lb{7.9} \\
&m_{-,0} (z, x_0,\te_0) = \f{u_{-,\te_0}'(z,x_0)}{u_{-,\te_0} (z,x_0)}, 
\quad x_0 \in (0,R),    \lb{7.10}
\end{align}
and more generally, 
\begin{align}
m_{+,\alpha}(z,x_0,\te_R) &=\frac{-\sin(\alpha) + \cos(\alpha) m_{+,0}(z,x_0,\te_R)}
{\cos(\alpha) +\sin(\alpha) m_{+,0}(z,x_0,\te_R)}, \quad \alpha \in[0,\pi),   
\lb{7.11} \\
m_{-,\alpha}(z,x_0,\te_0) &=\frac{-\sin(\alpha) + \cos(\alpha) m_{-,0}(z,x_0,\te_0)}
{\cos(\alpha) +\sin(\alpha) m_{-,0}(z,x_0,\te_0)}, \quad \alpha \in[0,\pi).  \lb{7.12}
\end{align}
We note that $m_{+,\alpha}(\cdot,x_0,\te_R)$ and 
$- m_{-,\alpha}(\cdot,x_0,\te_0)$ are 
known to be Herglotz functions  (cf., e.g., \cite[Sect.\ 9.5]{CL85}), 
\cite[Sect.\ II.8]{LS75}, \cite[Sect.\ 6.5]{Pe88}, \cite[Ch.\ III]{Ti62}). 

Associated with \eqref{7.9}--\eqref{7.12} one then defines the $2 \times 2$ 
Weyl--Titchmarsh matrices (the fundamental ingredient for 
inverse spectral theory for operators of the type $\Hte$) by, 
\begin{align}
&M_{0}(z,x_0,\te_0,\te_R) = \big[M_{0, j,k}(z,x_0,\te_0,\te_R)\big]_{j,k=1,2}\no  \\
&= \big[m_{-,0}(z,x_0,\te_0)-m_{+,}(z,x_0,\te_R)\big]^{-1} \lb{7.13} \\
& \quad  \times\begin{bmatrix}
1 & \f12[m_{-,0}(z,x_0,\te_0)
+m_{+,0}(z,x_0,\te_R)] \\ 
\f12[m_{-,0}(z,x_0,\te_0)+m_{+,0}(z,x_0,\te_R)] 
& m_{-,0}(z,x_0,\te_0)m_{+,0}(z,x_0,\te_R) \end{bmatrix},    \no 
\end{align}
and more generally, by
\begin{align}
&M_{\alpha}(z,x_0,\te_0,\te_R) 
= \big[M_{\alpha, j,k}(z,x_0,\te_0,\te_R)\big]_{j,k=1,2}\no  \\
&= \big[m_{-,\alpha}(z,x_0,\te_0)-m_{+,\alpha}(z,x_0,\te_R)\big]^{-1} \lb{7.14} \\
& \quad  \times\begin{bmatrix}
1 & \f12[m_{-,\alpha}(z,x_0,\te_0)
+m_{+,\alpha}(z,x_0,\te_R)] \\ 
\f12[m_{-,\alpha}(z,x_0,\te_0)+m_{+,\alpha}(z,x_0,\te_R)] 
& m_{-,\alpha}(z,x_0,\te_0)m_{+,\alpha}(z,x_0,\te_R) \end{bmatrix}, \no \\
& \hspace*{11cm} \alpha \in[0,\pi).   \no 
\end{align}
By inspection,
\begin{align}
\det (M_{\alpha}(z,x_0,\te_0,\te_R))&= -1/4, \quad \alpha \in[0,\pi), \; 
z\in\bbC\setminus\bbR,   \lb{7.15} \\
\Im(M_{\alpha}(z,x_0,\te_0,\te_R)) &> 0, \quad \alpha \in[0,\pi), 
\; z\in\bbC_+,  \lb{7.16}
\end{align}
and hence $M_{\alpha} (\cdot,x_0,\te_0,\te_R)$ is a $2\times 2$ matrix-valued 
Herglotz function implying that $M_{\alpha, j,j}(\cdot,x_0,\te_0,\te_R)$ are 
Herglotz functions for $j=1,2$. In particular, the matrices  
$M_{\alpha} (z,x_0,\te_0,\te_R)$ can be shown to have Herglotz representations 
of the type \eqref{5.1}.  

For the connection of $M_{0} (z,x_0,\te_0,\te_R)$ and 
$M_{\alpha} (z,x_0,\te_0,\te_R)$ with the Green's function 
$G_{\te_0,\te_R} (z,x,x')$ of $\Hte$ we first introduce a bit of notation:  
\begin{align}
\partial_{1} G_{\te_0,\te_R}(z,x_0, x') & 
= \partial_{x_1} G_{\te_0,\te_R}(z,x_1,x')\big|_{x_1=x_0}, \no \\
\partial_{2} G_{\te_0,\te_R}(z,x,x_0) 
&= \partial_{x_2} G_{\te_0,\te_R} (z,x,x_2)\big|_{x_2 = x_0}, \lb{7.17} \\
\partial_{1} \partial_{2} G_{\te_0,\te_R} (z,x_0, x_0) & =
\partial_{x_1} \partial_{x_2} G_{\te_0,\te_R} (z,x_1,x_2)\big|_{x_1=x_0, x_2=x_0},\;
\mbox{ etc.} \no
\end{align}
The expressions \eqref{7.13} and \eqref{7.14} for $M_0 (z,x_0,\te_0,\te_R)$ and 
$M_\alpha(z,x_0,\te_0,\te_R)$ then can be rewritten as follows: 
\begin{align}
M_{0,1,1}(z,x_0,\te_0,\te_R) &= G_{\te_0,\te_R}(z,x_0,x_0). \lb{7.18} \\
M_{0,1,2}(z,x_0,\te_0,\te_R) &= M_{0,2,1}(z,x_0,\te_0,\te_R) \no \\
& = (1/2) (\partial_1 + \partial_2)
G_{\te_0,\te_R}(z,x_0\pm 0,x_0\mp 0),    \lb{7.19} \\
M_{0,2,2}(z,x_0,\te_0,\te_R) &= 
\partial_1 \partial_2 G_{\te_0,\te_R}(z,x_0,x_0), \lb{7.20} 
\end{align}
and 
\begin{align}
&M_{\alpha,1,1}(z,x_0,\te_0,\te_R)  \no \\
& \quad =\big(\cos(\alpha)
+\sin(\alpha)\partial_1\big)\big(\cos(\alpha)+
\sin(\alpha)\partial_2\big)G_{\te_0,\te_R}(z,x_0,x_0). \lb{7.21} \\
&M_{\alpha,1,2}(z,x_0,\te_0,\te_R)=M_{\alpha,2,1}(z,x_0,\te_0,\te_R) \no \\
& \quad =(1/2)\big((\cos(\alpha)+\sin(\alpha)\partial_1)
(-\sin(\alpha)+\cos(\alpha)\partial_2) \no \\ 
&\qquad +(-\sin(\alpha)+\cos(\alpha)\partial_1)
(\cos(\alpha)+\sin(\alpha)\partial_2)\big)G_{\te_0,\te_R}(z,x_0\pm 0,x_0\mp 0), 
\lb{7.22} \\
&M_{\alpha,2,2}(z,x_0,\te_0,\te_R)   \no \\
& \quad =\big(-\sin(\alpha)
+\cos(\alpha)\partial_1\big)\big(-\sin(\alpha)+
\cos(\alpha)\partial_2\big)G_{\te_0,\te_R}(z,x_0,x_0).  \lb{7.23} 
\end{align}

For relevant references in the context of \eqref{7.9}--\eqref{7.23}, we refer, 
for instance, to \cite[Ch.\ III]{CL90}, \cite[Ch.\ 9]{CL85}, 
\cite[App.\ J]{GH03}, \cite{GHSZ95}, \cite{GRT96}, \cite{GS96}, 
\cite[Ch.\ 2]{Le87}, \cite[Ch.\ 2]{LS75}, 
\cite[Ch.\ 6]{Pe88}, \cite[Chs.\ II, III]{Ti62}, and the references cited therein. 

A comparison of equations \eqref{7.1}, \eqref{7.2},  \eqref{7.4}, \eqref{7.5}, 
\eqref{7.7}, \eqref{7.8} with equations \eqref{7.18}--\eqref{7.20} and 
\eqref{7.21}--\eqref{7.23}, respectively, clearly exhibits the different character 
of $\Late (z)$ and $M_\alpha(z,x_0,\te_0,\te_R)$, $\alpha \in [0,\pi)$, despite 
the fact that both are $2 \times 2$ matrix-valued Herglotz functions whose 
associated matrix-valued measures contain all spectral information on $\Hte$. 
Additional differences are highlighted in Remark \ref{r3.3}, and we feel that 
$\Late (z)$ (and more generally, $\Lates (z)$) is deserving of a more detailed study.

\medskip

We conclude with a final observation:

\begin{remark} \lb{r6.1}
With only minor modifications, all results in this paper extend to general, regular three-coefficient differential expressions of the type
\begin{equation}
\f{1}{r} \bigg(- \f{d}{dx} p \f{d}{dx} + q\bigg), \quad x\in [0,R],
\end{equation}
where
\begin{equation}
p > 0, r > 0 \, \text{ a.e.\ on $(0,R)$}, \quad \f{1}{p}, q, \, r \in L^1((0,R); dx),
\end{equation}
generating Sturm--Liouville operator realizations in $L^2((0,R); r dx)$.
One just needs to consistently replace the derivative $f'$ for elements in operator domains and Wronskians by the first quasi-derivative $(pf')$.
\end{remark}

\medskip

\noindent {\bf Acknowledgments.}
We are indebted to Sergei Avdonin, Pavel Kurasov, and Mark Malamud for helpful discussions on this topic, and to Dorina Mitrea for a critical reading of this manuscript.


\end{document}